\newcommand{\Z}{\mathbf{Z}}
\newcommand{\Q}{\mathbf{Q}}
\newcommand{\R}{\mathbf{R}}
\newcommand{\CC}{\mathbf{C}}
\newcommand{\h}{\mathcal{H}}
\newcommand{\dd}{\mathrm{d}}
\newcommand{\hol}{\mathrm{hol}}
\DeclareMathOperator{\Eis}{Eis}
\DeclareMathOperator{\GL}{GL}
\DeclareMathOperator{\id}{id}
\DeclareMathOperator{\Res}{Res}
\DeclareMathOperator{\SL}{SL}
\DeclareSymbolFont{cyrillic}{T2A}{cmr}{m}{n}
\DeclareMathSymbol{\BB}{\mathalpha}{cyrillic}{193}
\newtheorem{thm}{Théorème}[section]
\newtheorem{lem}[thm]{Lemme}
\newtheorem{pro}[thm]{Proposition}
\theoremstyle{definition}
\newtheorem{definition}[thm]{Définition}
\theoremstyle{remark}
\newtheorem{remarque}[thm]{Remarque}
\begin{document}

\title[Régulateurs modulaires]{Régulateurs modulaires explicites\\ via la méthode de Rogers-Zudilin}

\author[F. Brunault]{François Brunault}
\email{francois.brunault@ens-lyon.fr}
\address{ÉNS Lyon, UMPA\\ 46 allée d'Italie\\ 69007 Lyon, France}

\thanks{}

\begin{abstract}
Nous calculons le régulateur des éléments de Beilinson-Deninger-Scholl en termes de valeurs spéciales de fonctions $L$ de formes modulaires en utilisant la méthode de Rogers-Zudilin.
\end{abstract}

\maketitle

\section{Introduction}\label{sec intro}

Soit $N \geq 3$ un entier. Soit $Y(N)$ la courbe modulaire ouverte et $E$ la courbe elliptique universelle sur $Y(N)$. Pour un entier $n \geq 0$, notons $E^n$ la puissance fibrée $n$-ième de $E$ au-dessus de $Y(N)$. Pour tout $u \in (\Z/N\Z)^2$, avec $u \neq 0$ si $n=0$, Beilinson a défini le symbole d'Eisenstein
\begin{equation*}
\Eis^n(u) \in H^{n+1}_{\mathcal{M}}(E^n,\Q(n+1)).
\end{equation*}
Le symbole d'Eisenstein est à la base de la preuve par Beilinson et Deninger-Scholl de la conjecture de Beilinson pour les valeurs non critiques des fonctions $L$ des formes modulaires.

Plus précisément, donnons-nous un entier $k \geq 0$, et choisissons une décomposition $k=k_1+k_2$ avec $k_1,k_2 \geq 0$. Considérons les projections canoniques
\begin{equation*}
\begin{tikzcd}
& E^{k_1+k_2} \ar[swap]{dl}{p_1} \ar{dr}{p_2} & \\
E^{k_1} &  & E^{k_2}.
\end{tikzcd}
\end{equation*}
Soient $u_1,u_2 \in (\Z/N\Z)^2$, avec $u_i \neq 0$ si $k_i=0$. Généralisant les constructions de Beilinson et Deninger-Scholl, Gealy \cite{gealy:thesis} définit un élément dans la cohomologie motivique de $E^k$
\begin{equation}\label{Eisk1k2}
\Eis^{k_1,k_2}(u_1,u_2) = p_1^* \Eis^{k_1}(u_1) \cup p_2^* \Eis^{k_2}(u_2) \in H^{k+2}_{\mathcal{M}}(E^k,\Q(k+2)).
\end{equation}
Dans le cas $k=0$, on retrouve les éléments de Beilinson-Kato (cup-produits d'unités de Siegel) dans le $K_2$ de la courbe modulaire $Y(N)$. Considérons le régulateur de Beilinson à valeurs dans la cohomologie de Deligne-Beilinson
\begin{equation*}
r_{\BB} : H^{k+2}_{\mathcal{M}}(E^k,\Q(k+2)) \to H^{k+2}_{\mathcal{D}}(E^k/\R,\R(k+2))
\end{equation*}
et l'application analogue pour la compactification lisse $\overline{E}^k$ de $E^k$ définie par Deligne. Soit $f$ une forme parabolique primitive de poids $k+2$ pour le groupe $\Gamma_1(N)$. Notons $K_f$ le corps des coefficients de $f$, et $\omega_f \in \Omega^{k+1}(\overline{E}^k) \otimes K_f$ la forme différentielle associée à $f$. Classiquement, le régulateur de Beilinson associé à $f$ est défini au moyen de l'accouplement issu de la dualité de Poincaré
\begin{equation*}
\langle \cdot , \omega_f \rangle : H^{k+2}_{\mathcal{D}}(\overline{E}^k/\R,\R(k+2)) \to K_f \otimes \CC.
\end{equation*}
En utilisant les éléments (\ref{Eisk1k2}), Deninger-Scholl \cite{deninger-scholl} et Gealy \cite{gealy:thesis} montrent qu'il existe un élément $x$ de $H^{k+2}_{\mathcal{M}}(\overline{E}^k,\Q(k+2)) \otimes K_f$ tel que
\begin{equation*}
\langle r_{\BB}(x), \omega_f \rangle = \Omega_f^+ L'(f,0)
\end{equation*}
où $\Omega_f^+$ est la période réelle de $f$. Le calcul est basé sur la méthode de Rankin-Selberg. Cela permet de démontrer la conjecture de Beilinson pour la valeur spéciale $L(f,k+2)$.

Dans cet article, nous proposons une approche nouvelle, et totalement explicite, pour calculer le régulateur. Au lieu d'intégrer le régulateur de Beilinson contre une forme parabolique, nous pouvons l'intégrer le long des $(k+1)$-cycles explicites fournis par la théorie de Shokurov \cite{shokurov}. Plus précisément, considérons le cycle de Shokurov $X^k \{0,\infty\}$ (voir la section \ref{sec shokurov}). Dans la section \ref{deninger-scholl-gealy}, nous définissons une forme différentielle explicite $\Eis^{k_1,k_2}_{\mathcal{D}}(u_1,u_2)$ représentant le régulateur de Beilinson de $\Eis^{k_1,k_2}(u_1,u_2)$. Considérons alors l'intégrale
\begin{equation}\label{int EisD}
\int_{X^k \{0,\infty\}} \Eis^{k_1,k_2}_{\mathcal{D}}(u_1,u_2).
\end{equation}
Contrairement au cas $k=0$, l'intégrale (\ref{int EisD}) ne converge pas absolument en général. Pour remédier à ce problème, nous introduisons un paramètre complexe $s \in \CC$ et considérons l'intégrale
\begin{equation}\label{int EisD s}
\int_{X^k \{0,\infty\}} \Im(\tau)^s \Eis^{k_1,k_2}_{\mathcal{D}}(u_1,u_2).
\end{equation}
Nous montrons que l'intégrale (\ref{int EisD s}) converge pour $\Re(s) \ll 0$ et se prolonge en une fonction méromorphe sur $\CC$, holomorphe en $s=0$. Définissons alors l'intégrale régularisée
\begin{equation}\label{int EisD *}
\int_{X^k \{0,\infty\}}^* \Eis^{k_1,k_2}_{\mathcal{D}}(u_1,u_2)
\end{equation}
comme la valeur en $s=0$ de cette fonction.

Soit $\h$ le demi-plan de Poincaré. Pour $\ell \geq 1$, $a, b \in \Z/N\Z$, définissons la série d'Eisenstein
\begin{equation*}
G^{(\ell)}_{a,b}(\tau) = a_0(G^{(\ell)}_{a,b}) + \sum_{\substack{m,n \geq 1 \\ m \equiv a, n \equiv b (N)}} m^{\ell-1} q^{mn} +(-1)^{\ell} \sum_{\substack{m,n \geq 1 \\ m \equiv -a, n \equiv -b (N)}} m^{\ell-1} q^{mn} \qquad (\tau \in \h, q=e^{2i\pi\tau})
\end{equation*}
avec
\begin{equation*}
a_0(G^{(1)}_{a,b}) = \begin{cases} 0 & \textrm{si } a=b=0\\
\frac12-\{\frac{b}{N}\} & \textrm{si } a=0 \textrm{ et } b \neq 0\\
\frac12-\{\frac{a}{N}\} & \textrm{si } a \neq 0 \textrm{ et } b=0\\
0 & \textrm{si } a \neq 0 \textrm{ et } b \neq 0,
\end{cases}
\end{equation*}
et pour $\ell \geq 2$,
\begin{equation*}
a_0(G^{(\ell)}_{a,b}) = \begin{cases} - N^{\ell-1} \frac{B_\ell(\{\frac{a}{N}\})}{\ell} & \textrm{si } b=0\\
0 & \textrm{si } b \neq 0,
\end{cases}
\end{equation*}
où $\{x\} = x- \lfloor x \rfloor$ désigne la partie fractionnaire de $x$, et $B_\ell$ est le $\ell$-ième polynôme de Bernoulli. La fonction $G^{(\ell)}_{a,b}$ est une forme modulaire (quasi-modulaire si $\ell=2$) de poids $\ell$ pour le groupe $\Gamma_1(N^2)$.

Étant donnée une forme modulaire $F = \sum_{n =0}^{\infty} a_n q^n$ pour $\Gamma_1(N)$, notons
\begin{equation*}
\Lambda(F,s) = N^{s/2} (2\pi)^{-s} \Gamma(s) \sum_{n=1}^\infty a_n n^{-s}
\end{equation*}
la fonction $L$ complétée de $F$, et notons $\Lambda^*(F,0)$ la valeur régularisée de $\Lambda(F,s)$ en $s=0$ (voir la définition \ref{def Lambdaf0}).

En utilisant la méthode de Rogers-Zudilin, nous montrons le résultat suivant.

\begin{thm}\label{main thm}
Soit $k \geq 0$ un entier, et soient $k_1,k_2 \geq 0$ tels que $k=k_1+k_2$. Soit $N \geq 3$ un entier, et soient $u_1=(a_1,b_1)$, $u_2=(a_2,b_2) \in (\Z/N\Z)^2$. Supposons $u_i \neq (0,0)$ si $k_i=0$, et $b_i \neq 0$ si $k_i=1$. Alors
\begin{equation}
\label{main formula} \int_{X^k \{0,\infty\}}^{*} \Eis^{k_1,k_2}_{\mathcal{D}}(u_1,u_2) = \frac{(k_1+2)(k_2+2)}{2N^{k+2}} (2\pi)^{k+1} i^{k_1-k_2+1} \Lambda^* \left( G^{(k_2+1)}_{b_2,a_1} G^{(k_1+1)}_{b_1,-a_2} - G^{(k_2+1)}_{b_2,-a_1} G^{(k_1+1)}_{b_1,a_2},0 \right).
\end{equation}
\end{thm}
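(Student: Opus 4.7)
Le plan est de ramener le membre de gauche de (\ref{main formula}) à une intégrale de Mellin à une variable le long de la géodésique $\{it : t > 0\} \subset \h$, puis d'appliquer la substitution de Rogers-Zudilin pour la réécrire comme la valeur $\Lambda^*$ régularisée apparaissant à droite.

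La première étape consiste à utiliser la forme différentielle explicite $\Eis^{k_1,k_2}_{\mathcal{D}}(u_1,u_2)$ construite dans la section \ref{deninger-scholl-gealy}. Par la décomposition en cup-produit (\ref{Eisk1k2}), cette forme sur $E^k$ s'écrit comme produit extérieur d'une $(k_1+1)$-forme image réciproque attachée à $u_1 = (a_1,b_1)$ et d'une $(k_2+1)$-forme image réciproque attachée à $u_2 = (a_2,b_2)$, chacune étant une forme d'Eisenstein-Kronecker explicite. Le cycle de Shokurov $X^k\{0,\infty\}$ (section \ref{sec shokurov}) se fibre au-dessus de la géodésique modulaire $\{0,\infty\} \subset Y(N)$, la fibre au-dessus de $\tau$ étant un $k$-tore réel explicite dans $E_\tau^k \cong (\CC/(\Z+\tau\Z))^k$ engendré par les lacets provenant de $[0,1]$ et $[0,\tau]$ sur les facteurs elliptiques. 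Par Fubini le long de cette fibration, on intègre d'abord le long de ce $k$-tore puis en $\tau = it$. L'intégration fibre par fibre remplace chaque $(k_i+1)$-forme par une série d'Eisenstein scalaire de la forme $G^{(k_i+1)}_{b_i,\cdot}(\tau)$~: le premier indice est $b_i$ car l'intégration sur le cycle $[0,\tau]$ sélectionne précisément le caractère correspondant du réseau.

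Une fois l'intégration fibre par fibre effectuée, l'intégrale devient le préfacteur $\frac{(k_1+2)(k_2+2)}{2N^{k+2}}(2\pi)^{k+1} i^{k_1-k_2+1}$ apparaissant dans le membre de droite de (\ref{main formula}), multiplié par l'intégrale de Mellin régularisée
\begin{equation*}
\int_0^\infty t^s \cdot G^{(k_1+1)}_{b_1,\ast}(it) \cdot G^{(k_2+1)}_{b_2,\ast}(it) \, \frac{\dd t}{t},
\end{equation*}
qui converge absolument pour $\Re(s) \ll 0$. Son prolongement méromorphe et son holomorphie en $s=0$ s'obtiennent en isolant les termes constants $a_0(G^{(\ell)}_{a,b})$ de chaque facteur, dont les contributions de Mellin sont élémentaires et correspondent exactement à la régularisation de la définition \ref{def Lambdaf0}. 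L'étape de Rogers-Zudilin proprement dite consiste alors à écrire les développements de Fourier des deux séries d'Eisenstein~: on obtient une somme quadruple indexée par $(m_1,n_1,m_2,n_2)$ et pondérée par $q^{m_1 n_1 + m_2 n_2}$, sur laquelle on effectue le changement de variables involutif qui échange les rôles multiplicatifs des indices entre les deux facteurs. Cette substitution transforme la série de Dirichlet quadruple en la série $L$ d'un véritable produit de la forme $G^{(k_2+1)}_{b_2,a_1} \cdot G^{(k_1+1)}_{b_1,-a_2}$, et la différence dans (\ref{main formula}) provient de ce que chaque $G^{(\ell)}_{a,b}$ se décompose en deux morceaux, indexés par les paires $(m,n)$ et $(-m,-n)$, qui contribuent avec des signes opposés sous cette substitution.

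Le principal obstacle est double. D'un point de vue combinatoire, il faut suivre soigneusement l'intégration des formes d'Eisenstein-Kronecker sur les cycles toriques explicites, afin de retrouver la permutation précise d'indices $(a_i,b_i) \mapsto (b_i,\pm a_{3-i})$ et le signe $i^{k_1-k_2+1}$ provenant de l'orientation du cycle de Shokurov et de l'ordre du produit extérieur. D'un point de vue analytique, les hypothèses $u_i \neq (0,0)$ si $k_i=0$ et $b_i \neq 0$ si $k_i=1$ garantissent précisément que les termes constants des séries d'Eisenstein ne se combinent pas pour produire une divergence supplémentaire à la pointe $0$ qui empêcherait d'identifier la valeur en $s=0$ avec la valeur régularisée $\Lambda^*$ en $0$ du produit attendu.
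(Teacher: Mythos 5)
Votre stratégie générale coïncide avec celle de l'article (intégration sur les fibres du cycle de Shokurov, transformée de Mellin régularisée en $s$, substitution de Rogers-Zudilin), mais votre description de l'étape centrale est inexacte au point de faire disparaître la raison d'être de la méthode. Vous présentez l'intégrande obtenu après intégration fibre par fibre comme un produit $G^{(k_1+1)}_{b_1,\ast}(it)\,G^{(k_2+1)}_{b_2,\ast}(it)$ de deux séries d'Eisenstein holomorphes évaluées \emph{au même point}, avec une somme quadruple pondérée par $q^{m_1n_1+m_2n_2}$. Si tel était le cas, la transformée de Mellin serait directement la fonction $\Lambda$ d'un produit de formes modulaires et aucune substitution ne serait nécessaire. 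En réalité, le cycle de Shokurov vivant au-dessus de la géodésique de $0$ à $\infty$ (composante indexée par $\sigma$), la forme $\Eis^{k_1}_{\mathcal{D}}(u_1)$ doit être tirée en arrière par $\sigma$, de sorte qu'un des deux facteurs est évalué en $i/y$ et l'autre en $iy$ : c'est cette asymétrie (poids $e^{-2\pi(m_1n_1/y+m_2n_2y)/N}$) qui rend la proposition \ref{pro RZ trick} non triviale et indispensable. De plus, $\Eis^{k_1}_{\mathcal{D}}(u_1)$ n'est pas holomorphe : son développement de Fourier (proposition \ref{fourier Fab}) comporte des parties holomorphes, antiholomorphes et des termes constants, et seul le terme $\ell=k_1$ survit à l'intégration sur les fibres (lemme \ref{lem int fibres 2}). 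Les séries $G$ et $H$ avec la permutation d'indices $(a_i,b_i)\mapsto(b_i,\pm a_{3-i})$ n'apparaissent qu'\emph{après} l'échange d'indices de Rogers-Zudilin, et non, comme vous l'écrivez, dès l'intégration sur les cycles toriques.

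Le second point que vous expédiez est le traitement des termes constants, qui occupe l'essentiel de la démonstration. Après application de la méthode, l'intégrale régularisée s'écrit $A+B+C+D+E+F$, où $A$ est le terme principal et $B,\dots,F$ sont cinq quantités \emph{finies} issues des termes constants des séries d'Eisenstein ; elles ne sont pas absorbées par la régularisation de la définition \ref{def Lambdaf0}, mais se compensent deux à deux ($C+F=0$ dans tous les cas, $B+E=0$ si $k_2=0$, $B=0$ et $D+E=0$ si $k_2\geq 1$) grâce aux formules de Hurwitz (\ref{hurwitz 1}) et (\ref{zeta* 1}) ; c'est là, et non dans une question de convergence à la pointe $0$, qu'interviennent les hypothèses sur $u_i$. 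Enfin, il manque la symétrisation finale : $\Eis^{k_1,k_2}_{\mathcal{D}}(u_1,u_2)$ est la somme de deux termes échangés par l'involution $\theta$ de $E^k$ (au signe $(-1)^{k_1k_2}$ près sur l'orientation), et c'est la combinaison $A^{k_1,k_2}(u_1,u_2)+(-1)^{k_1+k_2+1}A^{k_2,k_1}(u_2,u_1)$ qui transforme le produit $(G^{(k_2+1)}_{b_2,a_1}+G^{(k_2+1)}_{b_2,-a_1})(G^{(k_1+1)}_{b_1,-a_2}-G^{(k_1+1)}_{b_1,a_2})$ en la différence $G^{(k_2+1)}_{b_2,a_1}G^{(k_1+1)}_{b_1,-a_2}-G^{(k_2+1)}_{b_2,-a_1}G^{(k_1+1)}_{b_1,a_2}$ du membre de droite de (\ref{main formula}).
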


\begin{remarque}
\begin{enumerate}
\item Lorsque $k_1=k_2=0$, on retrouve la formule pour le régulateur du cup-produit de deux unités de Siegel \cite{brunault:reg_siegel,zudilin}.
\item La forme modulaire de poids $k+2$ apparaissant dans le membre de droite de (\ref{main formula}) est à coefficients rationnels. Il est naturel de se demander si toute forme parabolique primitive est combinaison linéaire de telles formes modulaires.
\end{enumerate}
\end{remarque}

Cet article est organisé comme suit. Dans les sections \ref{sec notations} et \ref{sec EK}, nous introduisons les notations concernant la fonction zêta de Hurwitz et les séries d'Eisenstein-Kronecker, et rappelons les résultats principaux les concernant. Dans les sections \ref{sec shokurov} et \ref{sec Eis}, nous rappelons la définition des cycles de Shokurov et la formule donnant la réalisation du symbole d'Eisenstein. Dans la section \ref{deninger-scholl-gealy}, nous définissons les éléments de Deninger-Scholl et explicitons leurs réalisations en cohomologie de Deligne-Beilinson. Dans la section \ref{sec RZ}, nous exposons la méthode de Rogers-Zudilin dans un cadre suffisamment général. Dans la section \ref{sec fourier}, nous calculons le développement de Fourier de certaines séries d'Eisenstein analytiques réelles. Enfin, nous effectuons le calcul proprement dit du régulateur dans la section \ref{sec calcul reg}.

Ce travail doit beaucoup à Anton Mellit et Don Zagier, qui ont les premiers proposé une formulation analytique générale de l'astuce de Rogers-Zudilin. Je remercie Anton Mellit de m'avoir invité à l'université de Köln en novembre 2011, et pour les discussions très constructives que nous avons eues. Je remercie Odile Lecacheux pour des échanges très stimulants autour de ces questions, Loïc Merel pour avoir porté mon attention sur les cycles de Shokurov, Wadim Zudilin pour ses encouragements lors de la rédaction, et Michael Neururer pour ses commentaires et la relecture de ce texte.

\section{Fonction zêta de Hurwitz}\label{sec notations}

Pour $x \in \R/\Z$, on définit la \emph{fonction zêta de Hurwitz}
\begin{equation*}
\zeta(x,s) = \sum_{\substack{y>0 \\ y \equiv x(1)}} y^{-s} \qquad (\Re(s)>1)
\end{equation*}
et la \emph{fonction zêta périodique}
\begin{equation*}
\hat{\zeta}(x,s) = \sum_{n=1}^\infty e^{2i\pi nx} n^{-s} \qquad (\Re(s)>1).
\end{equation*}
 On a donc $\zeta(0,s)=\hat{\zeta}(0,s)=\zeta(s)$. La fonction $s \mapsto \zeta(x,s)$ se prolonge en une fonction méromorphe sur $\CC$, avec un unique pôle simple en $s=1$, de résidu égal à $1$ \cite[Corollary 2 (a)]{knopp-robins}.

\begin{definition}
Pour $x \in \R/\Z$, on pose
\begin{equation*}
\zeta^*(x,1) = \lim_{s \to 1} \left(\zeta(x,s)-\frac{1}{s-1}\right).
\end{equation*}
\end{definition}

Pour $x \in \R/\Z$, $x \neq 0$, la fonction $s \mapsto \hat{\zeta}(x,s)$ se prolonge en une fonction holomorphe sur $\CC$ \cite[Corollary 2 (c)]{knopp-robins}. Pour tout entier $N \geq 1$, en notant $\zeta_N = e^{2i\pi/N}$, on a les relations suivantes
\begin{equation*}
\sum_{x \in \Z/N\Z} \zeta_N^{xu} \zeta(\frac{x}{N},s) = N^s \hat{\zeta}(\frac{u}{N},s)
\end{equation*}
\begin{equation*}
\sum_{x \in \Z/N\Z} \zeta_N^{xu} \hat{\zeta}(\frac{x}{N},s) = N^{1-s} \zeta(-\frac{u}{N},s)
\end{equation*}

La formule de Hurwitz \cite[(2), Corollary 2 (b)]{knopp-robins} est une équation fonctionnelle reliant $\zeta$ et $\hat{\zeta}$ :
\begin{align}
\label{hurwitz 1} \zeta(x,1-s) & = \frac{\Gamma(s)}{(2\pi)^s} \left(e^{-\frac{i\pi s}{2}} \hat{\zeta}(x,s)+e^{\frac{i\pi s}{2}} \hat{\zeta}(-x,s)\right)\\
\label{hurwitz 2} \hat{\zeta}(x,s) & = \frac{(2\pi)^s}{\Gamma(s) (e^{-i\pi s}-e^{i\pi s})} \left(e^{-i\pi s/2} \zeta(x,1-s) - e^{i\pi s/2} \zeta(-x,1-s)\right).
\end{align}

Rappelons maintenant les résultats concernant les valeurs spéciales de $\zeta$ et $\hat{\zeta}$ aux entiers. Les polynômes de Bernoulli $B_n(x)$ sont définis par
\begin{equation*}
\frac{t e^{xt}}{e^t-1}  = \sum_{n=0}^\infty B_n(x) \frac{t^n}{n!}.
\end{equation*}
On a $B_0(x)=1$, $B_1(x)=x-\frac12$, $B_2(x)=x^2-x+\frac16$.

Pour $x \in \R$, on note $\{x\}=x-\lfloor x \rfloor$ la partie fractionnaire de $x$.

Pour $x \in \R/\Z$ et $n \geq 2$, on a
\begin{equation*}
\zeta(x,1-n) = -\frac{B_n(\{x\})}{n}.
\end{equation*}
On en déduit, pour $x \in \R/\Z$ et $n \geq 2$:
\begin{equation*}
\hat{\zeta}(x,n)+(-1)^n \hat{\zeta}(-x,n) = -\frac{(2i\pi)^n}{n!} B_n(\{x\}).
\end{equation*}
Pour $x \in \R/\Z$, on a
\begin{equation*}
\zeta(x,0) = \begin{cases} \frac12-\{x\} & \textrm{si } x \neq 0\\
-\frac12 & \textrm{si } x=0.
\end{cases}
\end{equation*}
On déduit de la formule de Hurwitz que pour $x \in \R/\Z$, $x \neq 0$, on a
\begin{align}
\label{hatzeta 1} \hat{\zeta}(x,1)-\hat{\zeta}(-x,1) & = 2i\pi \left(\frac12 - \{x\}\right)\\
\label{zeta* 1} \zeta^*(x,1)-\zeta^*(-x,1) & = i\pi \frac{e^{2i\pi x}+1}{e^{2i\pi x}-1}.
\end{align}

Notons les relations suivantes : pour tout $n \geq 1$ et tout $x \in \R/\Z$, avec $x \neq 0$ si $n=1$, on a
\begin{align}
\label{zeta -x} \zeta(-x,1-n)=(-1)^n \zeta(x,1-n) \\
\label{hatzeta -x} \hat{\zeta}(-x,1-n) = (-1)^n \hat{\zeta}(x,1-n).
\end{align}

\begin{definition}
Pour $u \in \Z/N\Z$, définissons les fonctions $\delta_u,\hat{\delta}_u : \Z/N\Z \to \CC$ par
\begin{align*}
\delta_u(n) & = \begin{cases} 1 & \textrm{si } n \equiv u \pmod{N}\\ 
0 & \textrm{si } n \not\equiv u \pmod{N}\end{cases}\\
\hat{\delta}_u(n) & = \sum_{x \in \Z/N\Z} \delta_u(x) \zeta_N^{-xn} = \zeta_N^{-un}.
\end{align*}
\end{definition}

\section{Séries d'Eisenstein-Kronecker}\label{sec EK}

Nous définissons dans cette section les séries d'Eisenstein-Kronecker classiques \cite[\S 1.3]{colmez}, \cite[\S 3]{kato}, \cite[Chap. VII]{schoeneberg}, \cite[Chap. VIII]{weil}.

Pour $k \geq 0$ un entier, $\tau \in \h$, $z,u \in \CC$, on pose
\begin{equation*}
\mathcal{K}_k(s,\tau,z,u) = \frac{\Gamma(s)}{(-2i\pi)^k} \left(\frac{\tau-\overline{\tau}}{2i\pi}\right)^{s-k} \sum_{\substack{\omega \in \Z+\tau\Z \\ \omega \neq -z}} \frac{\overline{\omega+z}^k}{|\omega+z|^{2s}} \exp \left(\frac{2i\pi (\omega \overline{u}-\overline{\omega} u)}{\tau-\overline{\tau}}\right).
\end{equation*}
Cette série converge pour $s \in \CC$, $\Re(s)>1+\frac{k}{2}$ et possède un prolongement méromorphe au plan complexe, holomorphe sur $\CC$ sauf éventuellement des pôles simples en $s=0$ (si $k=0$ et $z \in \Z+\tau\Z$) et en $s=1$ (si $k=0$ et $u \in \Z+\tau\Z$). La fonction $\mathcal{K}_k(s,\tau,z,u)$ est périodique en $u$ de période $\Z+\tau\Z$, et vérifie
\begin{equation*}
\mathcal{K}_k(s,\tau,z+\lambda, u) = \exp\left( \frac{2i\pi (\overline{\lambda}u-\lambda \overline{u})}{\tau-\overline{\tau}}\right) \mathcal{K}_k(s,\tau,z,u) \qquad (\lambda \in \Z+\tau\Z).
\end{equation*}
En particulier la fonction $z \mapsto \mathcal{K}_k(s,\tau,z,0)$ est $(\Z+\tau\Z)$-périodique. La fonction $\mathcal{K}_k$ vérifie l'équation fonctionnelle \cite[Chap. VIII, (32)]{weil}
\begin{equation*}
\mathcal{K}_k(s,\tau,z,u) = \exp \left(\frac{2i\pi(u \overline{z}-\overline{u}z)}{\tau-\overline{\tau}}\right) \mathcal{K}_k(k+1-s,\tau,u,z).
\end{equation*}

Pour $k \geq 1$, $N \geq 1$ un entier, $a,b \in \Z/N\Z$, $\tau \in \h$, on pose
\begin{equation*}
E^{(k)}_{a,b}(\tau) = \mathcal{K}_k\left(k,\tau,\frac{a\tau+b}{N},0\right) \qquad F^{(k)}_{a,b}(\tau) = \mathcal{K}_k\left(k,\tau,0,\frac{a\tau+b}{N}\right).
\end{equation*}

Pour $\tau \in \h$ et $\alpha \in \Q_{>0}$, on pose $q^\alpha = e^{2i\pi \alpha \tau}$.

\begin{lem}
Supposons $k \geq 1$, $k \neq 2$. La fonction $E^{(k)}_{a,b}$ est une série d'Eisenstein de poids $k$ pour le groupe $\Gamma(N)$, et son $q$-développement est donné par
\begin{equation*}
E^{(k)}_{a,b}(\tau) = a_0(E^{(k)}_{a,b}) + \sum_{\substack{m,n \geq 1 \\ m \equiv a (N)}} n^{k-1} \zeta_N^{bn} q^{mn/N} + (-1)^k \sum_{\substack{m,n \geq 1 \\ m \equiv -a(N)}} n^{k-1} \zeta_N^{-bn} q^{mn/N}
\end{equation*}
avec
\begin{equation*}
a_0(E^{(1)}_{a,b}) = \begin{cases} 0 & \textrm{si } a=b=0\\
\frac12 \frac{1+\zeta_N^b}{1-\zeta_N^b} & \textrm{si } a=0 \textrm{ et } b \neq 0\\
\frac12 - \{\frac{a}{N}\} & \textrm{si } a \neq 0
\end{cases}
\end{equation*}
et pour $k \geq 3$,
\begin{equation*}
a_0(E^{(k)}_{a,b}) = \begin{cases} \hat{\zeta}(\frac{b}{N},1-k) & \textrm{si } a=0\\
0 & \textrm{si } a \neq 0.
\end{cases}
\end{equation*}
\end{lem}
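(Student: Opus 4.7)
Pour prouver ce lemme, je partirais de la définition de $\mathcal{K}_k(s,\tau,z,0)$ spécialisée à $s=k$ avec $z=(a\tau+b)/N$. Pour $k \geq 3$, l'inégalité $k>1+k/2$ assure la convergence absolue en $s=k$ : les préfacteurs se simplifient en $(k-1)!/(-2i\pi)^k$, et $\overline{\omega+z}^k/|\omega+z|^{2k} = 1/(\omega+z)^k$. En reparamétrant $\omega = m\tau+n$ par $(m',n')=(mN+a,nN+b)$, on obtient
\begin{equation*}
E^{(k)}_{a,b}(\tau) = \frac{(k-1)!\,N^k}{(-2i\pi)^k} \sum_{\substack{(m',n') \in \Z^2 \\ (m',n') \equiv (a,b)\,(N)}} \frac{1}{(m'\tau+n')^k}
\end{equation*}
(le terme $(m',n')=(0,0)$ étant exclu lorsque $a=b=0$). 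Le cas $k=1$, où la série est conditionnellement convergente, sera traité séparément via la continuation méromorphe déjà mentionnée.

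Ensuite, je scinderais la somme selon $m'=0$ ou $m' \neq 0$ et appliquerais la formule de Lipschitz
\begin{equation*}
\sum_{n \in \Z} (w+n)^{-k} = \frac{(-2i\pi)^k}{(k-1)!} \sum_{d \geq 1} d^{k-1} e^{2i\pi dw} \qquad (\Im(w)>0,\; k \geq 2)
\end{equation*}
à la somme intérieure sur $n'$, à $m'$ fixé (après factorisation par $N^k$, $w=m'\tau/N+b/N$). Pour $m'>0$, cela reproduit directement la première série double. Pour $m'<0$, en substituant $m''=-m'$, $n''=-n'$ et en utilisant $(-1)^{-k}=(-1)^k$, on obtient la seconde série double, avec facteur $(-1)^k$ et $b$ remplacé par $-b$. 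Après multiplication par le préfacteur global, cette étape restitue exactement le $q$-développement de l'énoncé (hors terme constant).

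Le terme constant provient de la contribution de $m'=0$, présente seulement si $a=0$. Dans ce cas, elle vaut
\begin{equation*}
\frac{(k-1)!}{(-2i\pi)^k} \bigl( \zeta(\tfrac{b}{N},k) + (-1)^k \zeta(-\tfrac{b}{N},k) \bigr),
\end{equation*}
et son identification avec $\hat{\zeta}(b/N,1-k)$ (pour $k \geq 3$) découle de la formule de Hurwitz (\ref{hurwitz 2}) à $s=1-k$, en levant l'indétermination $0/0$ par la formule de réflexion $\Gamma(1-s)\sin(\pi s)=\pi/\Gamma(s)$, qui fournit en particulier $\Gamma(1-k)(e^{-i\pi k}-e^{i\pi k})=-2i\pi/\Gamma(k)$. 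Pour $a \neq 0$ et $k \geq 3$, toutes les $q$-puissances $q^{m'd/N}$ vérifient $m'd \geq 1$, donc $a_0=0$.

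L'obstacle principal sera le cas $k=1$, où la série n'est plus absolument convergente : je recourrais alors à la continuation méromorphe de $\mathcal{K}_1(s,\tau,z,0)$, qui est holomorphe en $s=1$ puisque $k \neq 0$. Les termes constants non triviaux $\frac12-\{a/N\}$ (pour $a \neq 0$) et $\frac12\frac{1+\zeta_N^b}{1-\zeta_N^b}$ (pour $a=0,\,b \neq 0$) s'identifieront respectivement à la valeur spéciale $\zeta(a/N,0)$ et au membre de droite de (\ref{zeta* 1}), via une analyse du comportement de $\mathcal{K}_1$ au voisinage de $s=1$. Enfin, la modularité de poids $k$ pour $\Gamma(N)$ résulte des formules classiques de transformation de $\mathcal{K}_k$ sous $\SL_2(\Z)$, la section $(a\tau+b)/N$ étant par construction invariante sous $\Gamma(N)$.
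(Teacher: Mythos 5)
Le texte ne démontre pas ce lemme : il est énoncé comme un fait classique, renvoyant aux références données en tête de la section (Colmez, Kato, Schoeneberg, Weil). Il n'y a donc pas de preuve interne à laquelle comparer la vôtre ; je l'évalue sur le fond. Pour $k \geq 3$, votre argument est correct et complet : la spécialisation $s=k$ de $\mathcal{K}_k$ converge absolument car $k>1+\tfrac{k}{2}$, le reparamétrage $(m',n')\equiv(a,b)\pmod N$ est exact, la formule de Lipschitz appliquée à $m'$ fixé redonne bien les deux séries doubles (avec le facteur $(-1)^k$ et $b\mapsto -b$ pour $m'<0$), et l'identification du terme constant $\frac{(k-1)!}{(-2i\pi)^k}\bigl(\zeta(\tfrac{b}{N},k)+(-1)^k\zeta(-\tfrac{b}{N},k)\bigr)=\hat{\zeta}(\tfrac{b}{N},1-k)$ se vérifie effectivement via (\ref{hurwitz 2}) et la formule de réflexion, qui donne $\Gamma(s)(e^{-i\pi s}-e^{i\pi s})=-2i\pi/\Gamma(1-s)$.

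Le seul point resté à l'état d'esquisse est le cas $k=1$, que vous signalez vous-même comme l'obstacle principal. Votre plan (continuation méromorphe de $\mathcal{K}_1$ en $s$, valeurs spéciales $\zeta(\tfrac{a}{N},0)=\frac12-\{\tfrac{a}{N}\}$ et $\zeta^*(\pm\tfrac{b}{N},1)$ via (\ref{zeta* 1})) est le bon et les identifications annoncées sont exactes ; mais pour le mener à bien il faut expliciter deux ingrédients que vous ne donnez pas : la formule de Lipschitz en poids $1$ au sens d'Eisenstein, $\sum_n^{(e)}(w+n)^{-1}=\pi\cot(\pi w)$, dont le terme constant $-i\pi\,\mathrm{sgn}(\Im w)$ est la source du $\frac12-\{\tfrac{a}{N}\}$, et la régularisation à la Hecke de $\sum_{m'\equiv a}\mathrm{sgn}(m')$ qui donne $\zeta(\tfrac{a}{N},0)-\zeta(-\tfrac{a}{N},0)=1-2\{\tfrac{a}{N}\}$. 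Tel quel, le cas $k=1$ est un programme plausible plutôt qu'une démonstration.
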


\begin{lem}
Supposons $k=2$. La fonction $E^{(2)}_{a,b}$ est de classe $\mathcal{C}^\infty$ sur $\h$. Elle est modulaire de poids $2$ pour le groupe $\Gamma(N)$, et son développement de Fourier est donné par
\begin{equation*}
E^{(2)}_{a,b}(\tau) = a_0(E^{(2)}_{a,b}) + \frac{1}{4\pi \Im(\tau)} +\sum_{\substack{m,n \geq 1\\ m \equiv a (N)}} n \zeta_N^{bn} q^{mn/N} + \sum_{\substack{m,n \geq 1\\ m \equiv -a(N)}} n \zeta_N^{-bn} q^{mn/N}
\end{equation*}
avec
\begin{equation*}
a_0(E^{(2)}_{a,b}) = \begin{cases} \hat{\zeta}(\frac{b}{N},-1) & \textrm{si } a=0,\\
-\frac{1}{12} & \textrm{si } a \neq 0.
\end{cases}
\end{equation*}
En particulier $E^{(2)}_{a,b}-E^{(2)}_{0,0}$ est une série d'Eisenstein de poids $2$ pour le groupe $\Gamma(N)$.
\end{lem}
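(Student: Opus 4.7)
My plan is to reduce the claim to Poisson summation on the lattice sum defining $\mathcal{K}_2$, combined with the Hecke trick to handle the non-absolute convergence at $s=2$. Recall that $E^{(2)}_{a,b}(\tau)$ is the value at $s=2$ of the meromorphic continuation of $\mathcal{K}_2(s,\tau,z,0)$ with $z = (a\tau+b)/N$. Writing $\omega = m + n\tau$ in the lattice sum, one has $\omega + z = (m+b/N) + (n+a/N)\tau$, and the double sum is organized by the integer $n$.

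Smoothness of $E^{(2)}_{a,b}$ in $\tau$ follows by termwise differentiation on the region $\Re(s) > 2+\varepsilon$ together with the classical fact that the meromorphic continuation in $s$ preserves joint $\mathcal{C}^\infty$ regularity in $(s,\tau)$ on compact subsets of $\h$, a standard property of Eisenstein-Kronecker series \cite[Chap.~VIII]{weil}. For the weight-$2$ modularity under $\Gamma(N)$, I would invoke the general transformation formula for $\mathcal{K}_k(s,\tau,z,u)$ under $\SL_2(\Z)$: it reduces to the substitution $\omega \mapsto \omega/(C\tau+D)$ in the lattice sum. The hypothesis $\gamma \in \Gamma(N)$ ensures that $(a,b) \in (\Z/N\Z)^2$ is fixed, so $z$ is preserved modulo $\Z+\tau\Z$, yielding $E^{(2)}_{a,b}(\gamma\tau) = (C\tau+D)^2 E^{(2)}_{a,b}(\tau)$.

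For the Fourier expansion proper, I would apply Poisson summation (equivalently, the Lipschitz formula) to the inner sum on $m$ for each fixed $n$, then analytically continue in $s$ and evaluate at $s=2$. The non-zero Fourier frequencies produce the $q$-expansion terms $n\,\zeta_N^{\pm bn}\,q^{mn/N}$ that appear in the two sums of the stated expansion, with signs determined by whether $n+a/N$ is positive or negative. The row $n+a/N = 0$ (which is non-trivial only when $a=0$ in $\Z/N\Z$) contributes the constant term via a Hurwitz-type zeta value: for $a=0$ one recovers $\hat{\zeta}(b/N,-1)$, and for $a \neq 0$ the analytic continuation of the remaining finite sum yields $-1/12$, exactly as in the classical Hecke computation.

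The zero Fourier frequency in each row yields the non-holomorphic correction: after extracting the factor $\bigl((\tau-\overline{\tau})/(2i\pi)\bigr)^{s-2}$ built into the definition of $\mathcal{K}_2$, this zero-frequency contribution collapses to a term of the form $c(s)/\Im(\tau)^{s-1}$ whose value at $s=2$ is exactly $1/(4\pi\Im(\tau))$, independently of $(a,b)$. This is the main obstacle of the computation: one must carefully isolate the zero-frequency component, prove that it is meromorphic in $s$ and holomorphic at $s=2$, and normalize the limit to match the explicit constant $1/(4\pi\Im(\tau))$. The final assertion that $E^{(2)}_{a,b} - E^{(2)}_{0,0}$ is a holomorphic Eisenstein series of weight $2$ for $\Gamma(N)$ then follows automatically, since the non-holomorphic term is the same in both expansions and cancels in the difference.
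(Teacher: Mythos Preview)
The paper does not prove this lemma; it is stated as a classical fact about Eisenstein--Kronecker series, with the references to Weil, Schoeneberg, Kato and Colmez given at the head of the section serving as justification. Your outline via Poisson summation (the Lipschitz formula) combined with Hecke's analytic-continuation trick is exactly the classical route taken in those references, and is also the method the paper itself spells out later in the proof of Proposition~\ref{fourier Eab} for the closely related real-analytic series $E^{a,b}_{(u_1,u_2)}$. So there is nothing substantive to compare: your approach and the implicit one coincide.

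One imprecision worth tightening: for $a \neq 0$ you say ``the analytic continuation of the remaining finite sum yields $-1/12$'', but as you yourself note there is no row $n+a/N=0$ in that case, hence no finite sum of the kind that produced $\hat{\zeta}(b/N,-1)$ when $a=0$. Both the constant $-1/12$ and the non-holomorphic term $1/(4\pi\Im(\tau))$ instead arise together from the analytic continuation to $s=2$ of the zero-frequency Poisson contributions, summed over all rows $n$; after the normalisation built into $\mathcal{K}_2$, this sum over $n$ becomes a Hurwitz-type zeta expression whose continuation yields the two terms, with the holomorphic constant collapsing to $-1/12$ independently of the value of $a$. This is not a genuine gap---the Hecke computation you invoke does produce exactly this---but the sentence as written misattributes the source of the constant.
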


\begin{lem}\label{lem Fk}
Supposons $k \geq 1$, et $(a,b) \neq (0,0)$ dans le cas $k=2$. La fonction $F^{(k)}_{a,b}$ est une série d'Eisenstein de poids $k$ pour le groupe $\Gamma(N)$, et son $q$-développement est donné par
\begin{equation*}
F^{(k)}_{a,b} = a_0(F^{(k)}_{a,b}) + N^{1-k} \left(\sum_{\substack{m,n \geq 1 \\ n \equiv a(N)}} \zeta_N^{bm} n^{k-1} q^{mn/N} + (-1)^k \sum_{\substack{m,n \geq 1\\ n \equiv -a(N)}} \zeta_N^{-bm} n^{k-1} q^{mn/N}\right).
\end{equation*}
avec
\begin{equation*}
a_0(F^{(1)}_{a,b}) = \begin{cases}0 & \textrm{si } a=b=0\\
\frac12 \frac{1+\zeta_N^b}{1-\zeta_N^b} & \textrm{si } a=0 \textrm{ et } b \neq 0\\
\frac12 - \{\frac{a}{N}\} & \textrm{si } a \neq 0
\end{cases}
\end{equation*}
et pour $k \geq 2$,
\begin{equation*}
a_0(F^{(k)}_{a,b}) = \zeta(\frac{a}{N},1-k) = -\frac{B_k(\{\frac{a}{N}\})}{k}.
\end{equation*}
\end{lem}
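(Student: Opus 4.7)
The plan is to obtain the Fourier expansion of $F^{(k)}_{a,b}$ by a direct computation starting from the definition of $\mathcal{K}_k$, followed by splitting the lattice sum and applying the Lipschitz/Hurwitz formulas already available. First I would substitute $z=0$ and $u = (a\tau+b)/N$ into the series defining $\mathcal{K}_k(s,\tau,z,u)$. Writing $\omega = m\tau + n$ with $(m,n)\in\Z^2\setminus\{0\}$, a short computation shows
\begin{equation*}
\frac{2i\pi(\omega\overline{u}-\overline{\omega}u)}{\tau-\overline{\tau}} = \frac{2i\pi}{N}(mb - na),
\end{equation*}
so that the exponential factor reduces to $\zeta_N^{mb-na}$. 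Setting $s=k$ makes the prefactor $\bigl((\tau-\overline{\tau})/(2i\pi)\bigr)^{s-k}$ equal to $1$, and (modulo convergence issues treated separately for $k=1,2$) we obtain the Hecke-type expression
\begin{equation*}
F^{(k)}_{a,b}(\tau) = \frac{\Gamma(k)}{(-2i\pi)^k} \sum_{(m,n)\neq(0,0)} \frac{\zeta_N^{mb-na}}{(m\tau+n)^k}.
\end{equation*}

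Next I would isolate the contribution of $m=0$, which produces the constant term. For $a\neq 0$ this contribution is $\frac{(k-1)!}{(-2i\pi)^k}\sum_{n\neq 0}\zeta_N^{-na}/n^k$; grouping $\pm n$ rewrites it as $\frac{(k-1)!}{(-2i\pi)^k}\bigl(\hat{\zeta}(-a/N,k) + (-1)^k\hat{\zeta}(a/N,k)\bigr)$, and the Hurwitz equation (\ref{hurwitz 1}) immediately identifies this with $\zeta(a/N,1-k) = -B_k(\{a/N\})/k$, matching the stated $a_0(F^{(k)}_{a,b})$. The case $a=0$, $k\geq 2$ is analogous, giving $\zeta(0,1-k)$.

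For $m\neq 0$, I would sum over $n$ by decomposing $n = r + N\ell$ with $r\in\{0,\dots,N-1\}$, and apply the classical Lipschitz formula
\begin{equation*}
\sum_{\ell\in\Z}\frac{1}{(w+\ell)^k} = \frac{(-2i\pi)^k}{(k-1)!}\sum_{d\geq 1} d^{k-1} e^{2i\pi d w}\qquad (\Im(w)>0,\ k\geq 2)
\end{equation*}
with $w = (m\tau + r)/N$. Summing the result over $r$ against the weight $\zeta_N^{-ra}$ collapses the inner geometric sum $\sum_r \zeta_N^{r(d-a)}$ to $N\cdot\mathbf{1}_{d\equiv a\,(N)}$, which produces exactly the first (resp. second, via $m\mapsto -m$) series in the stated expansion, with the predicted $N^{1-k}$ normalization and sign $(-1)^k$.

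The main obstacle is the case $k=1$, where the double lattice sum does not converge absolutely and the Lipschitz formula must be replaced by the cotangent identity $\pi\cot(\pi w) = -i\pi(1+e^{2i\pi w})/(1-e^{2i\pi w})$ together with Eisenstein's summation convention (sum over $n$ first, then $m$); the constant term $\tfrac12(1+\zeta_N^b)/(1-\zeta_N^b)$ for $a=0$, $b\neq 0$ is exactly what this cotangent evaluation produces at $w=b/N$. The case $k=2$, $(a,b)\neq(0,0)$ is intermediate: conditional convergence is handled in the same fashion, and the constraint $(a,b)\neq(0,0)$ is precisely what is needed to avoid the pole of $\mathcal{K}_0$ at $s=1$ appearing through the functional equation. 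Apart from these convergence subtleties, the identification of constant terms via the Hurwitz equation and of the non-zero Fourier coefficients via Lipschitz is a routine bookkeeping of indices.
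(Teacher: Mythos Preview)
The paper does not actually supply a proof of this lemma: in Section~\ref{sec EK} the Fourier expansions of $E^{(k)}_{a,b}$ and $F^{(k)}_{a,b}$ are recorded as classical facts, with global references to \cite{colmez,kato,schoeneberg,weil} given at the head of the section. So there is no proof in the paper to compare against.

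Your direct computation is the standard one and is entirely correct for $k\geq 3$: the reduction of the exponential factor to $\zeta_N^{mb-na}$, the identification of the $m=0$ contribution with $\zeta(a/N,1-k)$ via the Hurwitz formula~(\ref{hurwitz 1}), and the extraction of the non-constant Fourier coefficients via the Lipschitz formula all go through exactly as you describe, with the bookkeeping of the $N^{1-k}$ factor and the sign $(-1)^k$ from $m\mapsto -m$ done correctly.

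The one place where your sketch is genuinely thin is the case $k=1$, $a=0$, $b\neq 0$. You write that the constant term $\tfrac12(1+\zeta_N^b)/(1-\zeta_N^b)$ ``is exactly what this cotangent evaluation produces at $w=b/N$'', but if you carry out Eisenstein summation (sum over $n$ first) honestly, the $m\neq 0$ terms each contribute a constant $\pm i\pi$ from the expansion of $\pi\cot(\pi m\tau)$, and the resulting sum $\sum_{m\geq 1}(\zeta_N^{mb}-\zeta_N^{-mb})$ is divergent; some regularisation (Abel summation, or Hecke's trick in the $s$-variable) is still needed to extract the cotangent value, and you have not said which one you use or why it gives the right answer. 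A cleaner route here is to invoke the functional equation of $\mathcal{K}_k$ stated just above the definition of $E^{(k)},F^{(k)}$: with $z=0$ the exponential prefactor is trivial, so $F^{(1)}_{a,b}=\mathcal{K}_1(1,\tau,0,u)=\mathcal{K}_1(1,\tau,u,0)=E^{(1)}_{a,b}$, and the expansion of $E^{(1)}_{a,b}$ (a sum over a \emph{shifted} lattice, hence without this divergence issue) is already recorded in the preceding lemma. This both bypasses the convergence subtlety and explains transparently why the constant terms of $E^{(1)}$ and $F^{(1)}$ coincide.
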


\begin{lem}\cite[Lemma 3.7 (1)(iii)]{kato} \label{lem Fk SL2}
Soit $k \geq 1$ un entier. Soit $(a,b) \in (\Z/N\Z)^2$, avec $(a,b) \neq (0,0)$ dans le cas $k=2$. Alors pour tout $g \in \SL_2(\Z)$, on a $F^{(k)}_{a,b} |_k g = F^{(k)}_{(a,b)g}$.
\end{lem}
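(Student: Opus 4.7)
Ma stratégie est de partir directement de la définition de la série d'Eisenstein-Kronecker $\mathcal{K}_k$ et d'effectuer un changement de variable dans la somme sur le réseau $\Z+\tau\Z$. Écrivons $g = \begin{pmatrix} \alpha & \beta \\ \gamma & \delta \end{pmatrix} \in \SL_2(\Z)$, $\tau' = g\tau = \frac{\alpha\tau+\beta}{\gamma\tau+\delta}$, $c(\tau) = \gamma\tau+\delta$, et $(a',b') = (a,b)g = (a\alpha+b\gamma, a\beta+b\delta)$. Le calcul repose sur les trois identités élémentaires, conséquences directes de $\det(g)=1$ :
\begin{equation*}
\tau' - \overline{\tau'} = \frac{\tau-\overline{\tau}}{|c(\tau)|^2}, \qquad \Z + \tau'\Z = \frac{1}{c(\tau)}(\Z+\tau\Z), \qquad \frac{a\tau'+b}{N} = \frac{1}{c(\tau)} \cdot \frac{a'\tau+b'}{N}.
\end{equation*}

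Je poserais alors $u = \frac{a\tau'+b}{N}$ et $v = \frac{a'\tau+b'}{N}$, de sorte que $u = v/c(\tau)$, et j'effectuerais dans la série définissant $\mathcal{K}_k(s,\tau',0,u)$ la substitution $\omega' = \omega/c(\tau)$ avec $\omega \in \Z+\tau\Z$. Chaque ingrédient se transforme alors de façon contrôlée : le facteur exponentiel est invariant, car le quotient $(\omega'\overline{u}-\overline{\omega'}u)/(\tau'-\overline{\tau'})$ coïncide avec $(\omega\overline{v}-\overline{\omega}v)/(\tau-\overline{\tau})$ ; le facteur $\overline{\omega'}^k/|\omega'|^{2s}$ produit le coefficient supplémentaire $|c(\tau)|^{2s}/\overline{c(\tau)}^k$ ; enfin le facteur $((\tau'-\overline{\tau'})/2i\pi)^{s-k}$ introduit un $|c(\tau)|^{-2(s-k)}$. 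En recollant ces contributions, les puissances en module se simplifient et il reste
\begin{equation*}
\mathcal{K}_k(s,\tau',0,u) = \frac{|c(\tau)|^{2k}}{\overline{c(\tau)}^k}\, \mathcal{K}_k(s,\tau,0,v) = c(\tau)^k\, \mathcal{K}_k(s,\tau,0,v).
\end{equation*}
En spécialisant à $s=k$ et en divisant par $c(\tau)^k$ (soit en appliquant $|_k g$), on obtient exactement $F^{(k)}_{a,b}|_k g(\tau) = F^{(k)}_{(a,b)g}(\tau)$.

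Le point délicat sera le suivi méticuleux des puissances de $c(\tau)$ et $\overline{c(\tau)}$ issues des divers ingrédients, et la vérification que les puissances réelles de $|c(\tau)|$ s'éliminent parfaitement pour ne laisser qu'une puissance holomorphe, grâce à l'identité $|c(\tau)|^{2k} = c(\tau)^k \overline{c(\tau)}^k$. Pour $k=2$, où la série ne converge pas absolument en $s=k$, il conviendra d'établir d'abord l'identité pour $\Re(s)>1+k/2$ puis de conclure par prolongement méromorphe en $s$ ; l'hypothèse $(a,b)\neq(0,0)$ garantit l'absence de pôle en $s=k=2$ et rend cet argument de prolongement licite.
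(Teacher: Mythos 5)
Votre preuve est correcte. Le texte ne démontre pas ce lemme : il le cite directement de Kato [Lemma 3.7 (1)(iii)] ; votre vérification autonome est donc un complément réel plutôt qu'une divergence, et elle suit la route standard (c'est essentiellement l'argument de Kato). Les trois identités élémentaires que vous isolez sont exactes, et la comptabilité des facteurs aboutit bien : le terme de volume fournit $|c(\tau)|^{-2(s-k)}$, le sommande fournit $|c(\tau)|^{2s}/\overline{c(\tau)}^{k}$, et le produit vaut $|c(\tau)|^{2k}\,\overline{c(\tau)}^{-k}=c(\tau)^k$, tandis que l'exponentielle est invariante puisque numérateur et dénominateur acquièrent chacun un facteur $|c(\tau)|^{-2}$. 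Deux petites rectifications. D'une part, la région de convergence absolue est $\Re(s)>1+\tfrac{k}{2}$, de sorte que la spécialisation $s=k$ est hors de cette région pour $k=1$ aussi bien que pour $k=2$ ; l'argument de prolongement est donc nécessaire dans ces deux cas, et il vaut mieux le mener uniformément en $k$ (établir l'identité pour $\Re(s)>1+\tfrac{k}{2}$, où le réarrangement de la série absolument convergente est licite, puis prolonger). D'autre part, l'hypothèse $(a,b)\neq(0,0)$ pour $k=2$ n'a rien à voir avec un pôle : pour $k\geq 1$ la fonction $s\mapsto\mathcal{K}_k(s,\tau,z,u)$ est entière (le texte ne signale de pôles éventuels que pour $k=0$). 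Cette exclusion tient à ce que $F^{(2)}_{0,0}$ est la série non holomorphe de type $E_2$ (comparer avec le lemme sur $E^{(2)}_{a,b}$), que l'on ne souhaite pas qualifier de série d'Eisenstein ; l'identité de transformation que vous établissez resterait d'ailleurs valable pour elle en tant que fonction analytique réelle. Aucun de ces deux points n'affecte la validité de votre argument.
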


Nous allons maintenant définir des séries d'Eisenstein dont le $q$-développement est \emph{rationnel}.

\begin{definition}
Pour $k \geq 1$, $a, b \in \Z/N\Z$, on pose
\begin{equation*}
G^{(k)}_{a,b}(\tau) = a_0(G^{(k)}_{a,b}) + \sum_{\substack{m,n \geq 1 \\ m \equiv a, n \equiv b (N)}} m^{k-1} q^{mn} +(-1)^k \sum_{\substack{m,n \geq 1 \\ m \equiv -a, n \equiv -b (N)}} m^{k-1} q^{mn}
\end{equation*}
avec
\begin{equation*}
a_0(G^{(1)}_{a,b}) = \begin{cases} 0 & \textrm{si } a=b=0\\
\frac12-\{\frac{b}{N}\} & \textrm{si } a=0 \textrm{ et } b \neq 0\\
\frac12-\{\frac{a}{N}\} & \textrm{si } a \neq 0 \textrm{ et } b=0\\
0 & \textrm{si } a \neq 0 \textrm{ et } b \neq 0,
\end{cases}
\end{equation*}
et pour $k \geq 2$,
\begin{equation*}
a_0(G^{(k)}_{a,b}) = \begin{cases} - N^{k-1} \frac{B_k(\{\frac{a}{N}\})}{k} & \textrm{si } b=0\\
0 & \textrm{si } b \neq 0.
\end{cases}
\end{equation*}
\end{definition}

\begin{lem}\label{lem Gkab}
Soit $k \geq 1$, avec $a \neq 0$ si $k=2$. Alors la fonction $G^{(k)}_{a,b}(\tau/N)$ est modulaire de poids $k$ pour $\Gamma(N)$.
\end{lem}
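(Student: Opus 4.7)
Le plan est d'exprimer $G^{(k)}_{a,b}(\tau/N)$ comme une combinaison $\CC$-linéaire finie des fonctions $F^{(k)}_{a,B}(\tau)$ pour $B \in \Z/N\Z$. En effet, le lemme \ref{lem Fk} assure que chaque $F^{(k)}_{a,B}$ est modulaire de poids $k$ pour $\Gamma(N)$ ; l'hypothèse $a \neq 0$ dans le cas $k=2$ garantit que $(a,B) \neq (0,0)$ pour tout $B$, de sorte que le lemme s'applique uniformément.

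Pour établir cette identité, on commencerait par effectuer une transformée de Fourier finie en la variable $B$. En multipliant le $q$-développement du lemme \ref{lem Fk} par $\zeta_N^{-bB}$ et en sommant sur $B$, la relation $\sum_{B \in \Z/N\Z} \zeta_N^{(m-b)B} = N\delta_{m \equiv b \pmod N}$ impose les congruences $m \equiv b$ dans la première somme et $m \equiv -b$ dans la seconde. Après échange des noms des indices muets $m \leftrightarrow n$ puis changement de variable $\tau \mapsto N\tau$ (qui transforme $q^{mn/N}$ en $q^{mn}$), la comparaison avec le $q$-développement de $G^{(k)}_{a,b}$ donne, pour la partie non constante,
\begin{equation*}
G^{(k)}_{a,b}(\tau/N) - a_0(G^{(k)}_{a,b}) = N^{k-2} \sum_{B \in \Z/N\Z} \zeta_N^{-bB} \bigl( F^{(k)}_{a,B}(\tau) - a_0(F^{(k)}_{a,B}) \bigr).
\end{equation*}

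Il faut ensuite vérifier la coïncidence des termes constants des deux membres, ce qui est nécessaire puisqu'une constante non nulle n'est pas modulaire de poids $k \geq 1$ pour $\Gamma(N)$. Pour $k \geq 2$, le terme $a_0(F^{(k)}_{a,B}) = -B_k(\{a/N\})/k$ est indépendant de $B$, et la transformée de Fourier vaut $N\delta_{b,0}$ fois cette valeur ; après multiplication par $N^{k-2}$ on retrouve exactement $a_0(G^{(k)}_{a,b})$. Le cas $k=1, a \neq 0$ est analogue ; le cas $k=1, a=0$ se ramène à une identité de Fourier discrète pour les cotangentes, déduite de la formule $(1+\zeta_N^B)/(1-\zeta_N^B) = i\cot(\pi B/N)$ ou, de manière équivalente, des relations \eqref{hatzeta 1} et \eqref{zeta* 1}.

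Une fois l'identité
\begin{equation*}
G^{(k)}_{a,b}(\tau/N) = N^{k-2} \sum_{B \in \Z/N\Z} \zeta_N^{-bB} F^{(k)}_{a,B}(\tau)
\end{equation*}
pleinement établie, la conclusion est immédiate : toute combinaison $\CC$-linéaire de formes modulaires de poids $k$ pour $\Gamma(N)$ l'est également. Le principal obstacle est purement calculatoire et réside dans la vérification soigneuse des termes constants dans le cas $k=1$, $a=0$, les étapes restantes étant purement formelles.
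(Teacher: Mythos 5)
Votre démonstration est correcte et suit essentiellement la même voie que celle de l'article : celui-ci vérifie l'identité $F^{(k)}_{a,b}(\tau) = N^{1-k}\sum_{c} \zeta_N^{bc}\, G^{(k)}_{a,c}(\tau/N)$ puis l'inverse par transformée de Fourier discrète pour obtenir $G^{(k)}_{a,b}(\tau/N) = N^{k-2}\sum_{c}\zeta_N^{-bc} F^{(k)}_{a,c}(\tau)$ et conclut par le lemme \ref{lem Fk}, exactement comme vous. Votre vérification explicite des termes constants (y compris le cas $k=1$, $a=0$ via l'identité cotangente) ne fait que détailler le « on vérifie l'identité » de l'article.
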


\begin{proof}
On vérifie l'identité
\begin{equation*}
F^{(k)}_{a,b}(\tau) = N^{1-k} \sum_{c \in \Z/N\Z} \zeta_N^{bc} G^{(k)}_{a,c}(\tau/N).
\end{equation*}
Par inversion de Fourier, il vient
\begin{equation*}
G^{(k)}_{a,b}(\tau/N) = N^{k-2} \sum_{c \in \Z/N\Z} \zeta_N^{-bc} F^{(k)}_{a,c}(\tau).
\end{equation*}
Le résultat suit alors du lemme \ref{lem Fk}.
\end{proof}

\begin{lem}
Si $G(\tau/N)$ est modulaire de poids $k$ pour le groupe $\Gamma(N)$, alors $G(\tau)$ est modulaire de poids $k$ pour le groupe $\Gamma_1(N^2)$.
\end{lem}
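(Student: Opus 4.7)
Ma stratégie consiste à ramener la modularité pour $\Gamma_1(N^2)$ à celle pour $\Gamma(N)$ via la substitution $\tau \mapsto N\tau$. Je poserai $H(\tau) := G(\tau/N)$, de sorte que, par hypothèse, $H$ est modulaire de poids $k$ pour $\Gamma(N)$, et que $G(\tau) = H(N\tau)$.

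L'étape clef sera un calcul matriciel traduisant cette conjugaison. Pour $\gamma = \begin{pmatrix} a & b \\ c & d \end{pmatrix} \in \Gamma_1(N^2)$, c'est-à-dire $a \equiv d \equiv 1 \pmod{N^2}$ et $c \equiv 0 \pmod{N^2}$, je poserai $\gamma' := \begin{pmatrix} a & Nb \\ c/N & d \end{pmatrix}$ et j'écrirai
\[
N\gamma\tau \;=\; \frac{a(N\tau)+Nb}{(c/N)(N\tau)+d} \;=\; \gamma'(N\tau).
\]
L'hypothèse $c \in N^2\Z$ entraîne $c/N \in N\Z$, donc $\gamma' \in M_2(\Z)$ ; le calcul $\det\gamma' = ad-bc = 1$ le place dans $\SL_2(\Z)$ ; et la réduction modulo $N$ donne $\gamma' \equiv I_2 \pmod N$, soit $\gamma' \in \Gamma(N)$. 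Il restera à appliquer la modularité de $H$ pour $\gamma'$ :
\[
G(\gamma\tau) = H(N\gamma\tau) = H(\gamma'(N\tau)) = \bigl((c/N)(N\tau)+d\bigr)^k H(N\tau) = (c\tau+d)^k G(\tau),
\]
ce qui fournit l'automorphie de poids $k$ voulue. La condition de croissance aux pointes pour $G$ se déduira pareillement de celle de $H$, via le pull-back par $\tau \mapsto N\tau$ qui envoie les pointes de $\Gamma_1(N^2)$ sur celles de $\Gamma(N)$ en respectant les $q$-développements.

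Il n'y a pas ici de véritable obstacle : toute la preuve repose sur la vérification que la conjugaison par $\begin{pmatrix} N & 0 \\ 0 & 1 \end{pmatrix}$ envoie $\Gamma_1(N^2)$ dans $\Gamma(N)$. C'est précisément cette propriété qui justifie le choix du niveau $N^2$ (et non $N$) dans l'énoncé : si l'on supposait seulement $c \equiv 0 \pmod N$, le coefficient $c/N$ ne serait plus nécessairement divisible par $N$ et $\gamma'$ sortirait alors de $\Gamma(N)$.
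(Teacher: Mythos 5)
Votre preuve est correcte et suit exactement la même démarche que celle de l'article, qui se contente d'invoquer l'inclusion $\begin{pmatrix} N & 0 \\ 0 & 1 \end{pmatrix} \Gamma_1(N^2) \begin{pmatrix} N & 0 \\ 0 & 1 \end{pmatrix}^{-1} \subset \Gamma(N)$ ; vous en explicitez simplement le calcul matriciel ($\gamma' = \begin{pmatrix} a & Nb \\ c/N & d \end{pmatrix} \in \Gamma(N)$) et la traduction en termes d'automorphie, ce qui est exact.
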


\begin{proof}
Cela résulte de l'inclusion $\begin{pmatrix} N & 0 \\ 0 & 1 \end{pmatrix} \Gamma_1(N^2) \begin{pmatrix} N & 0 \\ 0 & 1 \end{pmatrix}^{-1} \subset \Gamma(N)$.
\end{proof}

On en déduit le lemme suivant.

\begin{lem}\label{lem Gkab 2}
Soit $k \geq 1$, avec $a \neq 0$ si $k=2$. Alors $G^{(k)}_{a,b}$ est une forme modulaire de poids $k$ pour $\Gamma_1(N^2)$.
\end{lem}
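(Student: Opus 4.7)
La démonstration consiste à enchaîner les deux lemmes immédiatement précédents. Le lemme \ref{lem Gkab} fournit, sous l'hypothèse $a \neq 0$ lorsque $k=2$, la modularité de poids $k$ pour $\Gamma(N)$ de la fonction $G^{(k)}_{a,b}(\tau/N)$ ; le lemme qui le suit transporte alors la propriété de modularité de $G(\tau/N)$ pour $\Gamma(N)$ en celle de $G(\tau)$ pour $\Gamma_1(N^2)$, via l'inclusion $\begin{pmatrix} N & 0 \\ 0 & 1 \end{pmatrix} \Gamma_1(N^2) \begin{pmatrix} N & 0 \\ 0 & 1 \end{pmatrix}^{-1} \subset \Gamma(N)$. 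En appliquant ce second lemme avec $G = G^{(k)}_{a,b}$, on obtient directement la propriété de transformation modulaire sous $\Gamma_1(N^2)$.

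Pour que $G^{(k)}_{a,b}$ soit véritablement une \emph{forme modulaire}, et pas seulement une fonction modulaire, il reste à s'assurer de l'holomorphie aux pointes de $\Gamma_1(N^2)$. Le $q$-développement explicite donné dans la définition de $G^{(k)}_{a,b}$ ne comporte que des puissances entières positives de $q$, ce qui assure l'holomorphie à la pointe $\infty$. Pour les autres pointes, on utilise l'identité $G^{(k)}_{a,b}(\tau/N) = N^{k-2} \sum_{c \in \Z/N\Z} \zeta_N^{-bc} F^{(k)}_{a,c}(\tau)$ établie dans la preuve du lemme \ref{lem Gkab}, combinée au lemme \ref{lem Fk SL2} qui décrit l'action de $\SL_2(\Z)$ sur les séries de Kronecker : les translatés de $G^{(k)}_{a,b}$ par un élément de $\SL_2(\Z)$ s'expriment à nouveau comme combinaisons linéaires de $F^{(k)}_{a',c'}$, qui sont holomorphes aux pointes d'après le lemme \ref{lem Fk}.

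Il n'y a donc aucun obstacle réel à surmonter : tout le contenu non trivial est déjà absorbé par le lemme \ref{lem Gkab}, dont la preuve repose sur l'inversion de Fourier reliant les familles $(G^{(k)}_{a,c}(\tau/N))_{c \in \Z/N\Z}$ et $(F^{(k)}_{a,b}(\tau))_{b \in \Z/N\Z}$, et sur la modularité connue des séries d'Eisenstein-Kronecker $F^{(k)}_{a,b}$.
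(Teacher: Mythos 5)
Votre démonstration est correcte et suit exactement le même chemin que l'article, qui déduit ce lemme directement de l'enchaînement du lemme \ref{lem Gkab} et du lemme de transport de $\Gamma(N)$ vers $\Gamma_1(N^2)$. Vos précisions supplémentaires sur l'holomorphie aux pointes sont pertinentes mais ne modifient pas l'argument.
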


Rappelons que l'involution d'Atkin-Lehner $W_N$ sur $M_k(\Gamma_1(N))$ est définie par
\begin{equation*}
(W_N f)(\tau) = i^k N^{-k/2} \tau^{-k} f\left(-\frac{1}{N\tau}\right) \qquad (f \in M_k(\Gamma_1(N))).
\end{equation*}

\begin{definition}\label{def Hkab}
Pour $k \geq 1$, $a, b \in \Z/N\Z$, on pose
\begin{equation*}
H^{(k)}_{a,b}(\tau) = a_0(H^{(k)}_{a,b}) + \sum_{m,n \geq 1} \left(\zeta_N^{-am-bn} + (-1)^k \zeta_N^{am+bn}\right) n^{k-1} q^{mn}
\end{equation*}
avec
\begin{equation*}
a_0(H^{(1)}_{a,b}) = \begin{cases} 0 & \textrm{si } a=b=0\\
-\frac12 \frac{1+\zeta_N^b}{1-\zeta_N^b} & \textrm{si } a=0 \textrm{ et } b \neq 0\\
-\frac12 \frac{1+\zeta_N^a}{1-\zeta_N^a} & \textrm{si } a \neq 0 \textrm{ et } b=0\\
-\frac12 \left(\frac{1+\zeta_N^a}{1-\zeta_N^a} + \frac{1+\zeta_N^b}{1-\zeta_N^b}\right) & \textrm{si } a \neq 0 \textrm{ et } b \neq 0,
\end{cases}
\end{equation*}
et pour $k \geq 2$,
\begin{equation*}
a_0(H^{(k)}_{a,b}) = \hat{\zeta}(-\frac{b}{N},1-k).
\end{equation*}
\end{definition}

Notons l'identité $H^{(k)}_{-a,-b} = (-1)^k H^{(k)}_{a,b}$.

\begin{lem}\label{lem W GH}
Soit $k \geq 1$, $a,b \in \Z/N\Z$. Dans le cas $k=2$, supposons $a \neq 0$. Alors
\begin{equation*}
W_{N^2}(G^{(k)}_{a,b}) = \frac{i^k}{N} H^{(k)}_{a,b}.
\end{equation*}
En particulier $H^{(k)}_{a,b}$ est une forme modulaire de poids $k$ pour $\Gamma_1(N^2)$.
\end{lem}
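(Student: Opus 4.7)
Le plan est d'écrire $W_{N^2}$ comme un opérateur barre-oblique et de ramener le calcul au comportement de $F^{(k)}_{a,c}$ sous l'action de $\SL_2(\Z)$, fournie par le lemme \ref{lem Fk SL2}. On part de l'identité établie dans la preuve du lemme \ref{lem Gkab} :
\begin{equation*}
G^{(k)}_{a,b}(\tau/N) = N^{k-2} \sum_{c \in \Z/N\Z} \zeta_N^{-bc} F^{(k)}_{a,c}(\tau).
\end{equation*}

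Posons $\sigma = \begin{pmatrix} 0 & -1 \\ N^2 & 0 \end{pmatrix}$. Avec la normalisation $(f|_k g)(\tau) = (\det g)^{k/2} (c\tau+d)^{-k} f(g\tau)$, on vérifie que $(W_{N^2} f)(\tau) = i^k (f |_k \sigma)(\tau)$, de sorte qu'il suffit de prouver $G^{(k)}_{a,b} |_k \sigma = \frac{1}{N} H^{(k)}_{a,b}$. On décompose $\sigma = \beta_N \cdot S \cdot \alpha_N$ où $\beta_N = \begin{pmatrix} 1 & 0 \\ 0 & N \end{pmatrix}$, $S = \begin{pmatrix} 0 & -1 \\ 1 & 0 \end{pmatrix}$ et $\alpha_N = \begin{pmatrix} N & 0 \\ 0 & 1 \end{pmatrix}$. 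L'action de $|_k \beta_N$ sur $G^{(k)}_{a,b}$ produit $N^{-k/2} G^{(k)}_{a,b}(\tau/N)$, que l'identité ci-dessus exprime comme combinaison linéaire des $F^{(k)}_{a,c}(\tau)$ ; le lemme \ref{lem Fk SL2} donne ensuite $F^{(k)}_{a,c} |_k S = F^{(k)}_{c,-a}$, et $|_k \alpha_N$ envoie $\tau$ en $N\tau$. Après combinaison des facteurs de déterminant, on obtient
\begin{equation*}
(G^{(k)}_{a,b} |_k \sigma)(\tau) = N^{k-2} \sum_{c \in \Z/N\Z} \zeta_N^{-bc} F^{(k)}_{c,-a}(N\tau).
\end{equation*}

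Il reste à développer en série de Fourier et à comparer avec $H^{(k)}_{a,b}$. Le $q$-développement de $F^{(k)}_{c,-a}$ fourni par le lemme \ref{lem Fk} fait intervenir $q^{mn/N}$ avec $n \equiv \pm c \pmod N$, de sorte que $F^{(k)}_{c,-a}(N\tau)$ ne contient que des puissances entières de $q$. En intervertissant les sommations et en appliquant l'inversion de Fourier discrète $\sum_{c \in \Z/N\Z} \zeta_N^{-bc} \mathbf{1}_{n \equiv c\,(N)} = \zeta_N^{-bn}$, la partie non constante du membre de droite se simplifie en $\frac{1}{N} \sum_{m,n \geq 1} (\zeta_N^{-am-bn} + (-1)^k \zeta_N^{am+bn}) n^{k-1} q^{mn}$, ce qui est exactement $\frac{1}{N}$ fois la partie non constante de $H^{(k)}_{a,b}$.

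L'étape la plus délicate sera l'identification des termes constants. Pour $k \geq 2$, on applique la formule $\sum_{c \in \Z/N\Z} \zeta_N^{-bc} \zeta(c/N, 1-k) = N^{1-k} \hat{\zeta}(-b/N, 1-k)$ (section \ref{sec notations}) à $a_0(F^{(k)}_{c,-a}) = \zeta(c/N, 1-k)$, ce qui donne $\frac{1}{N} \hat{\zeta}(-b/N, 1-k) = \frac{1}{N} a_0(H^{(k)}_{a,b})$. Le cas $k=1$ requiert une vérification au cas par cas selon la nullité de $a$ et $b$, en utilisant notamment l'identité élémentaire $\sum_{c=1}^{N-1} c \,\zeta_N^{-bc} = N/(\zeta_N^{-b}-1)$ lorsque $b \neq 0$, de manière à retomber sur chaque sous-cas de la définition \ref{def Hkab}. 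La modularité de $H^{(k)}_{a,b}$ pour $\Gamma_1(N^2)$ en découle, puisque $W_{N^2}$ préserve $M_k(\Gamma_1(N^2))$ et que $G^{(k)}_{a,b}$ y appartient par le lemme \ref{lem Gkab 2}.
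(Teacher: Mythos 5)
Votre preuve est correcte et suit pour l'essentiel la même route que celle du papier : même identité $G^{(k)}_{a,b}(\tau/N) = N^{k-2}\sum_{c}\zeta_N^{-bc}F^{(k)}_{a,c}(\tau)$, même appel au lemme \ref{lem Fk SL2} pour aboutir à $N^{k-2}\sum_c\zeta_N^{-bc}F^{(k)}_{c,-a}(N\tau)$ (votre décomposition de $\begin{pmatrix}0&-1\\N^2&0\end{pmatrix}$ en $\beta_N S\alpha_N$ n'est qu'un reconditionnement de la substitution directe $\tau\mapsto -1/(N^2\tau)$ effectuée dans le papier), et même comparaison des développements de Fourier, y compris le calcul du terme constant via $\sum_c\zeta_N^{-bc}\zeta(c/N,1-k)=N^{1-k}\hat\zeta(-b/N,1-k)$. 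La seule différence réelle porte sur le cas $k=1$ : le papier renvoie à \cite[Lemma 13]{brunault:reg_siegel}, tandis que vous esquissez la vérification directe des quatre sous-cas du terme constant ; votre identité $\sum_{c=1}^{N-1}c\,\zeta_N^{-bc}=N/(\zeta_N^{-b}-1)$ est correcte et les sous-cas se recollent bien avec la définition \ref{def Hkab}, ce qui rend votre argument autonome à cet endroit.
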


\begin{proof}
Le cas $k=1$ est traité dans \cite[Lemma 13]{brunault:reg_siegel}. Supposons donc $k \geq 2$, avec $a \neq 0$ si $k=2$. Par définition, on a
\begin{align*}
W_{N^2} (G^{(k)}_{a,b})(\tau) & = i^k N^{-k} \tau^{-k} G^{(k)}_{a,b}(-\frac{1}{N^2 \tau})\\
& = i^k N^{-k} \tau^{-k} N^{k-2} \sum_{c \in \Z/N\Z} \zeta_N^{-bc} F^{(k)}_{a,c}(-\frac{1}{N\tau}).
\end{align*}
D'après le lemme \ref{lem Fk SL2}, il vient
\begin{align*}
W_{N^2} (G^{(k)}_{a,b})(\tau) & = i^k N^{-k} \tau^{-k} N^{k-2} \sum_{c \in \Z/N\Z} \zeta_N^{-bc} (N \tau)^k F^{(k)}_{c,-a}(N\tau)\\
& = i^k N^{k-2} \sum_{c \in \Z/N\Z} \zeta_N^{-bc} F^{(k)}_{c,-a}(N\tau)
\end{align*}
En utilisant le développement de Fourier de $F^{(k)}$ (lemme \ref{lem Fk}), on obtient
\begin{align*}
W_{N^2} (G^{(k)}_{a,b})(\tau) & = i^k N^{k-2} \sum_{c \in \Z/N\Z} \zeta_N^{-bc} a_0(F^{(k)}_{c,-a}) \\
& \quad + i^k N^{k-2} \sum_{c \in \Z/N\Z} \zeta_N^{-bc} N^{1-k} \sum_{m,n \geq 1} \left(\hat{\delta}_a(m) \delta_c(n) + (-1)^k \hat{\delta}_{-a}(m) \delta_{-c}(n)\right) n^{k-1} q^{mn}\\
& = i^k N^{k-2} \sum_{c \in \Z/N\Z} \zeta_N^{-bc} \zeta(\frac{c}{N},1-k) \\
& \quad + \frac{i^k}{N} \sum_{m,n \geq 1} \left(\hat{\delta}_a(m) \hat{\delta}_b(n) + (-1)^k \hat{\delta}_{-a}(m) \hat{\delta}_{-b}(n)\right) n^{k-1} q^{mn}\\
& = \frac{i^k}{N} \left( \hat{\zeta}(-\frac{b}{N},1-k) + \sum_{m,n \geq 1} \left(\hat{\delta}_a(m) \hat{\delta}_b(n) + (-1)^k \hat{\delta}_{-a}(m) \hat{\delta}_{-b}(n)\right) n^{k-1} q^{mn}\right).
\end{align*}
\end{proof}

Pour terminer cette section, nous déterminons la fonction $L$ associée à $H^{(k)}_{a,b}$ (voir \cite[3.10]{kato} pour le cas des séries $E^{(k)}_{a,b}$ et $F^{(k)}_{a,b}$).

Rappelons que si $f=\sum_{n = 0}^\infty a_n(f) q^n$ est une forme modulaire de poids $k \geq 1$ pour $\Gamma_1(N)$, alors la fonction $L$ de $f$ est définie par $L(f,s)=\sum_{n=1}^\infty a_n(f) n^{-s}$ pour $\Re(s)>k$.

\begin{definition}
Pour $f \in M_k(\Gamma_1(N))$, on pose $f^* = f-a_0(f)$.
\end{definition}

La fonction $L$ complétée de $f$ est définie par
\begin{equation*}
\Lambda(f,s):=N^{s/2} (2\pi)^{-s} \Gamma(s) L(f,s) = N^{s/2} \int_0^\infty f^*(iy) y^s \frac{\dd y}{y}.
\end{equation*}
Elle se prolonge en une fonction méromorphe sur $\CC$ qui vérifie l'équation fonctionnelle
\begin{equation*}
\Lambda(f,s) = \Lambda(W_N f, k-s) \qquad (f \in M_k(\Gamma_1(N))).
\end{equation*}
De plus, la fonction $\Lambda(f,s)+\frac{a_0(f)}{s}+\frac{a_0(W_N f)}{k-s}$ est holomorphe sur $\CC$ \cite[Thm 4.3.5]{miyake}.

\begin{definition}\label{def Lambdaf0}
Les valeurs régularisées de $\Lambda(f,s)$ en $s=0$ et $s=k$ sont définies par
\begin{align*}
\Lambda^*(f,0) & := \lim_{s \to 0} \left(\Lambda(f,s)+\frac{a_0(f)}{s}\right)\\
\Lambda^*(f,k) & := \lim_{s \to k} \left(\Lambda(f,s)+\frac{a_0(W_N f)}{k-s}\right).
\end{align*}
\end{definition}

Notons que $\Lambda^*(f,k)=\Lambda^*(W_N f,0)$ pour tout $f \in M_k(\Gamma_1(N))$.

\begin{lem}\label{lem L Hkab}
Soit $k \geq 1$ et $a,b \in \Z/N\Z$. Dans le cas $k=2$, supposons $a \neq 0$. Alors
\begin{equation*}
\Lambda(H^{(k)}_{a,b},s) = N^s (2\pi)^{-s} \Gamma(s) \left(\hat{\zeta}(-\frac{a}{N},s) \hat{\zeta}(-\frac{b}{N},s-k+1) + (-1)^k \hat{\zeta}(\frac{a}{N},s) \hat{\zeta}(\frac{b}{N},s-k+1)\right).
\end{equation*}
\end{lem}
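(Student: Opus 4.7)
Le plan est de calculer directement la série de Dirichlet $L(H^{(k)}_{a,b},s) = \sum_{\nu \geq 1} a_\nu(H^{(k)}_{a,b}) \nu^{-s}$ à partir du développement de Fourier donné dans la définition \ref{def Hkab}, puis de la factoriser en produit de deux fonctions zêta périodiques de Hurwitz, et enfin de rajouter le facteur archimédien en tenant compte du niveau de $H^{(k)}_{a,b}$.

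Concrètement, je commencerais par regrouper les termes du $q$-développement selon l'exposant $\nu = mn$, pour lire
\begin{equation*}
a_\nu(H^{(k)}_{a,b}) = \sum_{\substack{m,n \geq 1 \\ mn=\nu}} \bigl( \zeta_N^{-am-bn} + (-1)^k \zeta_N^{am+bn} \bigr) n^{k-1},
\end{equation*}
puis j'injecterais cette expression dans la définition de $L(H^{(k)}_{a,b},s)$. L'interversion des sommations, licite pour $\Re(s) \gg 0$ par convergence absolue (les deux séries partielles convergent pour $\Re(s) > k$, puisqu'elles se majorent par $\zeta(\Re(s)) \zeta(\Re(s)-k+1)$), transforme la série en la double somme
\begin{equation*}
\sum_{m,n \geq 1} \bigl( \zeta_N^{-am} \zeta_N^{-bn} + (-1)^k \zeta_N^{am} \zeta_N^{bn} \bigr) m^{-s} n^{k-1-s}.
\end{equation*}
Chacun des deux morceaux se sépare alors en un produit de deux séries simples, et la définition $\hat{\zeta}(x,s) = \sum_{n \geq 1} e^{2i\pi n x} n^{-s}$ donne immédiatement
\begin{equation*}
L(H^{(k)}_{a,b},s) = \hat{\zeta}(-\tfrac{a}{N},s) \hat{\zeta}(-\tfrac{b}{N},s-k+1) + (-1)^k \hat{\zeta}(\tfrac{a}{N},s) \hat{\zeta}(\tfrac{b}{N},s-k+1).
\end{equation*}

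Il reste à passer à la fonction $L$ complétée. Le point à ne pas manquer est que $H^{(k)}_{a,b}$ est modulaire pour $\Gamma_1(N^2)$ d'après le lemme \ref{lem W GH}, de sorte que le facteur archimédien dans la définition de $\Lambda$ utilise $(N^2)^{s/2} = N^s$, et non $N^{s/2}$ comme on pourrait le croire à première vue. En multipliant par $N^s (2\pi)^{-s} \Gamma(s)$ la formule précédente, on obtient l'identité annoncée. Le calcul est essentiellement formel ; la seule véritable subtilité sera de garder trace correctement du niveau $N^2$ et des signes de $a$ et $b$ dans les arguments de $\hat{\zeta}$ ; aucun point technique sérieux n'est à prévoir.
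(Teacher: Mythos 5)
Votre démonstration est correcte et suit exactement la voie du papier, dont la preuve se réduit à « Cela résulte des définitions » : vous explicitez le regroupement des coefficients de Fourier selon $\nu=mn$, la factorisation en produit de deux fonctions $\hat{\zeta}$, et la prise en compte du niveau $N^2$ qui donne le facteur $(N^2)^{s/2}=N^s$. Rien à redire.
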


\begin{proof}
Cela résulte des définitions.
\end{proof}

\section{Cycles de Shokurov}\label{sec shokurov}

Soit $N \geq 3$ un entier. Soit $Y(N)$ le courbe modulaire de niveau $N$ définie sur $\Q$, et soit $E$ la courbe elliptique universelle au-dessus de $Y(N)$. Fixons un entier $n \geq 0$. Notons $E^n$ la puissance fibrée $n$-ième de $E$ au-dessus de $Y(N)$. C'est une variété abélienne de dimension relative $n$ sur $Y(N)$. Les points complexes de $E^n$ sont décrits par l'isomorphisme \cite[(3.6)]{deninger:extensions}
\begin{equation*}
E^n(\CC) = (\Z^{2n} \rtimes \SL_2(\Z)) \backslash (\h \times \CC^n \times \GL_2(\Z/N\Z)).
\end{equation*}
On note $\sigma = \begin{pmatrix} 0 & -1 \\ 1 & 0 \end{pmatrix}$. Pour tout entier $0 \leq j \leq n$, on définit le \emph{cycle de Shokurov}
\begin{equation*}
X^j Y^{n-j} \{0,\infty\} = \left\{(iy ; it_1 y,\ldots,it_j y,t_{j+1},\ldots,t_n ; \sigma) : y>0, t_1,\ldots,t_n \in [0,1]\right\}.
\end{equation*}
C'est un $(n+1)$-cycle sur $E^n(\CC)$, muni de l'orientation produit. Il est naturellement fibré au-dessus du symbole modulaire $\{0,\infty\}$. Notons $f_j : X^j Y^{n-j} \{0,\infty\} \to \{0,\infty\}$ cette fibration.

\begin{definition}
Pour $y>0$, on note $\gamma_{y,j}$ la fibre de $f_j$ au-dessus du point $iy$.
\end{definition}

On a donc
\begin{equation*}
\gamma_{y,j} = \left\{(iy,it_1 y,\ldots,it_j y,t_{j+1},\ldots,t_n,\sigma) : t_1,\ldots,t_n \in [0,1]\right\}.
\end{equation*}

\section{Symbole d'Eisenstein}\label{sec Eis}

On note $X(N)$ la compactification de $Y(N)$, et on note $X^\infty = X(N)-Y(N)$ l'ensemble des pointes de $X(N)$, vu comme sous-schéma fermé de $X(N)$. On a une bijection
\begin{align*}
P(\Z/N\Z) \backslash \GL_2(\Z/N\Z) & \xrightarrow{\cong} X^\infty\\
[g] & \mapsto g\infty
\end{align*}
où $P$ est le sous-groupe algébrique de $\SL_2$ formé des matrices de la forme $\pm \begin{pmatrix} 1 & * \\ 0 & 1 \end{pmatrix}$. Pour tout entier $n \geq 0$, on définit
\begin{equation*}
\Q[X^\infty]^{(n)} = \left\{ f : \GL_2(\Z/N\Z) \to \Q : f\left(\begin{pmatrix} * & * \\ 0 & 1 \end{pmatrix} g \right) = f(g) = (-1)^n f(-g)\right\}.
\end{equation*}
Ce groupe est (non canoniquement) isomorphe au groupe des diviseurs sur $X^\infty$.

Fixons un entier $n \geq 0$. On dispose d'une application résidu
\begin{equation*}
\Res^n : H^{n+1}_{\mathcal{M}}(E^n,\Q(n+1)) \to \Q[X^\infty]^{(n)}.
\end{equation*}
généralisant l'application diviseur pour $n=0$. Notons $V_0$ le sous-groupe de $\Q[X^\infty]^{(0)}$ formé des diviseurs de degré $0$, et notons $V_n = \Q[X^\infty]^{(n)}$ pour $n \geq 1$. L'image de l'application $\Res^n$ est égale à $V_n$ (généralisation du théorème de Manin-Drinfeld). Le \emph{symbole d'Eisenstein}, construit par Beilinson, est une application canonique
\begin{equation*}
\BB^n : V_n \to H^{n+1}_{\mathcal{M}}(E^n,\Q(n+1))
\end{equation*}
telle que $\Res^n \circ \BB^n = \id_{V_n}$.

Notons $\omega^n : \Q[(\Z/N\Z)^2] \to \Q[X^\infty]^{(n)}$ l'application horosphérique, définie par
\begin{equation*}
\omega^n(\phi)(g) = \sum_{v \in (\Z/N\Z)^2} \phi(g^{-1}v) B_{n+2}(\{\frac{v_2}{N}\})
\end{equation*}
Dans le cas $n=0$, l'application $\omega^0$ induit une surjection de $\Q[(\Z/N\Z)^2 \backslash \{0\}]$ dans $V_0$ ; dans le cas $n \geq 1$, l'application $\omega^n$ est surjective \cite[7.5]{schappacher-scholl:eis} (la preuve donnée dans cet article marche également pour $n=0$).

\begin{definition}
Soit $u \in (\Z/N\Z)^2$. Dans le cas $n=0$, on suppose $u \neq 0$. On définit
\begin{equation}
\Eis^n(u) = \BB^n \circ \omega^n (\phi_u) \in H^{n+1}_{\mathcal{M}}(E^n,\Q(n+1))
\end{equation}
où $\phi_u$ est la fonction caractéristique de $\{u\}$. 
\end{definition}

Notons que $\Eis^0(u)$ n'est autre que l'unité de Siegel $g_u \otimes \frac{2}{N}$ \cite[\S 1]{kato}.

Par définition, on a $\Res^n(\Eis^n(u)) =\omega^n(\phi_u)$ ; l'application horosphérique donne donc les résidus des symboles d'Eisenstein.

Le groupe $G=\GL_2(\Z/N\Z)$ agit à gauche sur $E^n$. Sur les points complexes, cette action est induite par $\gamma \cdot (\tau,z,g)=(\tau,z,g{}^{t}\gamma)$. On en déduit une action à droite de $G$ sur $H^{n+1}_{\mathcal{M}}(E^n,\Q(n+1))$. Les applications $\Res^n$ et $\BB^n$ étant $G$-équivariantes, on a le lemme suivant.

\begin{lem}
Pour tout $g \in G$, et pour tout $u \in (\Z/N\Z)^2$ (avec $u \neq 0$ si $n=0$), on a
\begin{equation}
g^* \Eis^n(u) = \Eis^n ({}^{t}gu).
\end{equation}
\end{lem}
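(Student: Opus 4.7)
The plan is to reduce the assertion to the $G$-equivariance of the horospherical map $\omega^n$, and then invoke the $G$-equivariance of the Beilinson symbol $\BB^n$ stated just above the lemma. Since $\Eis^n(u) = \BB^n(\omega^n(\phi_u))$ by definition, the strategy is to prove the elementary identity
$$g^* \omega^n(\phi_u) = \omega^n(\phi_{{}^t g u}) \qquad (g \in G, \; u \in (\Z/N\Z)^2)$$
and then apply $\BB^n$ to both sides.

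First I would make explicit the right $G$-action on $\Q[X^\infty]^{(n)}$ induced by the left action on $E^n$. From the formula $\gamma \cdot (\tau, z, h) = (\tau, z, h \, {}^t \gamma)$ on complex points, and from the bijection $[g] \mapsto g\infty$ identifying $P \backslash G$ with $X^\infty$, the induced left action on cusps is $g \cdot [h] = [h \, {}^t g]$, so the corresponding pull-back on functions reads $(g^* f)([h]) = f([h \, {}^t g])$.

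Next I would evaluate both sides of the key identity at an arbitrary $h \in G$. On the left,
$$(g^* \omega^n(\phi_u))([h]) = \sum_{v \in (\Z/N\Z)^2} \phi_u((h \, {}^t g)^{-1} v) B_{n+2}(\{v_2/N\}) = B_{n+2}(\{(h \, {}^t g \, u)_2/N\}),$$
since the only $v$ contributing is $v = h \, {}^t g \, u$. On the right,
$$\omega^n(\phi_{{}^t g u})([h]) = \sum_v \phi_{{}^t g u}(h^{-1} v) B_{n+2}(\{v_2/N\}) = B_{n+2}(\{(h \, {}^t g \, u)_2/N\})$$
by exactly the same reasoning. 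The two expressions agree, so the claimed identity holds.

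The main (and essentially only) subtle point is the bookkeeping that turns the convention $\gamma \cdot (\tau,z,h) = (\tau,z,h \, {}^t \gamma)$ into right-multiplication by ${}^t g$ on the quotient $P \backslash G$; once this is sorted out, the rest is a direct unfolding of definitions. Applying $\BB^n$ to both sides of the displayed identity and using its stated $G$-equivariance then yields
$$g^* \Eis^n(u) = g^*\BB^n(\omega^n(\phi_u)) = \BB^n(g^* \omega^n(\phi_u)) = \BB^n(\omega^n(\phi_{{}^t g u})) = \Eis^n({}^t g u),$$
which is the desired formula.
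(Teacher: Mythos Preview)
Your proof is correct and follows the same approach as the paper, which simply asserts the lemma as an immediate consequence of the $G$-equivariance of $\Res^n$ and $\BB^n$ without spelling out any details. Your computation of the $G$-action on $\Q[X^\infty]^{(n)}$ and the verification that $g^*\omega^n(\phi_u)=\omega^n(\phi_{{}^t g u})$ make explicit precisely what the paper leaves implicit.
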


Dans la suite de cette section, nous rappelons la description explicite de la réalisation du symbole d'Eisenstein en cohomologie de Deligne \cite{beilinson:2,deninger:extensions,deninger-scholl}.

Commençons par quelques rappels sur la cohomologie de Deligne $H^i_{\mathcal{D}}(X/\R,\R(j))$ associée à un schéma $X$ lisse et quasi-projectif sur $\R$. Notons $\mathcal{S}^\cdot(X,\R(n))$ le complexe des formes différentielles $\mathcal{C}^\infty$ sur $X(\CC)$ à valeurs dans $\R(n)=(2i\pi)^n \R$ vérifiant $c^* \omega = (-1)^n \omega$, où $c$ désigne la conjugaison complexe sur $X(\CC)$. Par la résolution des singularités, il existe une compactification lisse $\overline{X}$ de $X$ telle que $X^\infty := \overline{X}-X$ soit un diviseur à croisements normaux. Pour un entier $m \geq 0$, notons $\Omega^m_{\overline{X}} \langle X^\infty \rangle$ le $\CC$-espace vectoriel des $m$-formes holomorphes sur $X(\CC)$ à singularités logarithmiques le long de $X^\infty(\CC)$. Pour toute forme différentielle $\alpha$ et tout entier $n \in \Z$, notons $\pi_n(\alpha) = \frac12(\alpha+(-1)^n \overline{\alpha})$.

\begin{pro}\cite[(2.5.1)]{deninger-scholl}\label{pro description HnXn}
Pour tout entier $n \geq 0$, on a un isomorphisme
\begin{equation*}
H^{n+1}_{\mathcal{D}}(X/\R,\R(n+1)) \cong \frac{\{\phi \in \mathcal{S}^n(X,\R(n)) | d\phi = \pi_n(\omega) \textrm{ avec } \omega \in \Omega^{n+1}_{\overline{X}}\langle X^\infty \rangle\}}{d\mathcal{S}^{n-1}(X,\R(n))}.
\end{equation*}
\end{pro}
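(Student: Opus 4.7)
The plan is to unwind the definition of real Deligne--Beilinson cohomology, replace its ingredients by soft $\mathcal{C}^\infty$ resolutions, and then collapse the resulting cocycles to a single real form by applying the projector $\pi_n$.

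Over $\CC$, the cohomology $H^{n+1}_{\mathcal{D}}(X,\R(n+1))$ is computed as the hypercohomology on $\overline{X}$ of the Deligne complex
\[
\R(n+1)_{\mathcal{D}} = \mathrm{Cone}\bigl(\R(n+1)_X \oplus F^{n+1}\Omega^{\bullet}_{\overline{X}}\langle X^\infty\rangle \xrightarrow{\iota-\jmath} \Omega^{\bullet}_{\overline{X}}\langle X^\infty\rangle\bigr)[-1],
\]
and the version over $\R$ is obtained by passing to the invariants under the involution coming from complex conjugation on $X(\CC)$. By a theorem of Deligne, the inclusion $\Omega^{\bullet}_{\overline{X}}\langle X^\infty\rangle \hookrightarrow Rj_*\Omega^{\bullet}_X$ (with $j \colon X \hookrightarrow \overline{X}$) is a quasi-isomorphism, and the $\mathcal{C}^\infty$ de Rham resolution then identifies the latter with $\mathcal{S}^{\bullet}(X,\CC)$; likewise the constant sheaf $\R(n+1)_X$ is resolved by $\mathcal{S}^{\bullet}(X,\R(n+1))$. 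Substituting these resolutions into the cone yields an explicit model by global $\mathcal{C}^\infty$ forms.

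In this model, a degree $n+1$ cohomology class is represented by a triple $(\alpha,\omega,\beta)$ with $\alpha \in \mathcal{S}^{n+1}(X,\R(n+1))$ closed, $\omega \in F^{n+1}\Omega^{n+1}_{\overline{X}}\langle X^\infty\rangle = \Omega^{n+1}_{\overline{X}}\langle X^\infty\rangle$, and $\beta \in \mathcal{S}^n(X,\CC)$, subject to $\alpha - \omega = d\beta$; coboundaries take the form $(d\alpha', d\omega', \alpha' - \omega' - d\beta')$. The next step is to apply the projector $\pi_n$ to the third coordinate: set $\phi = \pi_n(\beta) \in \mathcal{S}^n(X,\R(n))$. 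Since $\alpha$ takes values in $\R(n+1)$, the $(-1)^{n+1}$-eigenspace of conjugation, we have $\pi_n(\alpha) = 0$, and therefore
\[
d\phi = \pi_n(d\beta) = \pi_n(\alpha) - \pi_n(\omega) = -\pi_n(\omega),
\]
so that after replacing $\omega$ by $-\omega$ one obtains a form $\phi$ of the required shape. Conversely, given $\phi$ with $d\phi = \pi_n(\omega)$, the triple $(\pi_{n+1}(\omega), \omega, -\phi)$ satisfies $\alpha - \omega = -\pi_n(\omega) = -d\phi$ and thus defines a cocycle of the cone complex mapping back to $\phi$.

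The main obstacle is to check that the assignment $[\alpha,\omega,\beta] \mapsto [\phi]$ is well defined modulo $d\mathcal{S}^{n-1}(X,\R(n))$ and bijective. Applying $\pi_n$ to a coboundary produces $\pi_n(\alpha') - \pi_n(\omega') - d\pi_n(\beta')$: the first term vanishes as above, and the second must be absorbed by the freedom to modify $\omega$ (which is only specified through the equation $d\phi = \pi_n(\omega)$). Verifying this together with bijectivity via the converse construction is a routine but delicate diagram chase, requiring careful bookkeeping of the twists $\R(n)$ versus $\R(n+1)$, of the compatibilities between $\pi_n$, $\pi_{n+1}$, the Hodge filtration and the real structure $F_\infty$, and of the sign conventions in the cone differential.
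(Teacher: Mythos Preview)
The paper does not prove this proposition: it is stated with the citation \cite[(2.5.1)]{deninger-scholl} and no argument is given. Your outline---realising Deligne--Beilinson cohomology as hypercohomology of the standard cone, replacing its constituents by the $\mathcal{C}^\infty$ de Rham resolutions, and then using the projector $\pi_n$ to collapse a cocycle triple $(\alpha,\omega,\beta)$ to a single real form $\phi$---is precisely the argument Deninger and Scholl carry out in the cited reference, so there is nothing in the present paper to compare it against.

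One small caution on your inverse construction: when you form the triple $(\pi_{n+1}(\omega),\omega,-\phi)$ from a given $\phi$ with $d\phi=\pi_n(\omega)$, you need $d\omega=0$ for this to be a genuine cocycle, and the condition $d\phi=\pi_n(\omega)$ alone only forces $\pi_n(d\omega)=0$. In the applications of the paper (where $X=E^n$ has dimension $n+1$, so $\Omega^{n+2}_{\overline{X}}\langle X^\infty\rangle=0$) this is automatic, but for the general statement you should either remark that $\omega$ may be adjusted within its class, or follow Deninger--Scholl in observing that the Hodge-theoretic constraints on the cone force closedness.
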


Considérons maintenant le régulateur de Beilinson
\begin{equation*}
r_{\BB} : H^{n+1}_{\mathcal{M}}(E^n,\Q(n+1)) \to H^{n+1}_{\mathcal{D}}(E^n/\R,\R(n+1)).
\end{equation*}
D'après la proposition \ref{pro description HnXn}, les éléments de $H^{n+1}_{\mathcal{D}}(E^n/\R,\R(n+1))$ sont représentés par des $n$-formes différentielles sur $E^n(\CC)$. Nous noterons $(\tau;z_1,\ldots,z_n;g)$ les coordonnées naturelles sur $E^n(\CC)$, avec $\tau \in \h$, $z_i \in \CC$, $g \in \GL_2(\Z/N\Z)$, et nous poserons $q=e^{2i\pi \tau}$. Pour tous entiers $a,b \geq 0$ tels que $a+b=n$, définissons la $n$-forme différentielle sur $\CC^n$
\begin{equation*}
\psi_{a,b} = \frac{1}{n!} \sum_{\sigma \in \mathfrak{S}_n} \varepsilon(\sigma) d\overline{z}_{\sigma(1)} \wedge \cdots \wedge d\overline{z}_{\sigma(b)} \wedge dz_{\sigma(b+1)} \wedge \cdots \wedge dz_{\sigma(n)}.
\end{equation*}

D'après \cite[(3.12), (3.28)]{deninger:extensions} et \cite[Remark after Lemma 7.1]{huber-kings}, on a la proposition suivante.

\begin{pro}\label{pro EisD}
Soit $u \in (\Z/N\Z)^2$, avec $u \neq 0$ dans le cas $n=0$. L'élément $r_{\BB}(\Eis^n(u))$ est représenté par une $n$-forme différentielle $\Eis^n_{\mathcal{D}}(u)$ vérifiant
\begin{equation*}
\Eis^n_{\mathcal{D}}(u) = -\frac{n! (n+2)}{2i\pi N} \frac{\tau-\overline{\tau}}{2} \sum_{a=0}^n \left(\sideset{}{'}\sum_{(c,d) \in \Z^2} \frac{\zeta_N^{c(gu)_1+d(gu)_2}}{(c\tau+d)^{a+1} (c\overline{\tau}+d)^{n+1-a}} \right) \psi_{a,n-a} \mod{d\tau,d\overline{\tau}}.
\end{equation*}
De plus, on a
\begin{equation*}
\dd \Eis^n_{\mathcal{D}}(u) = \pi_n(\Eis^n_{\hol}(u))
\end{equation*}
où $\Eis^n_{\hol}(u)$ est la série d'Eisenstein holomorphe de poids $n+2$ définie par
\begin{equation*}
\Eis^n_{\hol} (u) = -\frac{(n+2)!}{(2i\pi)^2 N} \sideset{}{'}\sum_{(c,d) \in \Z^2} \frac{\zeta_N^{c(gu)_1+d(gu)_2}}{(c\tau+d)^{n+2}} \frac{dq}{q} \wedge dz_1 \wedge \cdots \wedge dz_n.
\end{equation*}
\end{pro}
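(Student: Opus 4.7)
The plan is to assemble the formula by combining the description of real Deligne cohomology given by Proposition \ref{pro description HnXn} with the explicit Eisenstein-Kronecker representative of $r_{\BB}(\Eis^n)$ due to Deninger \cite{deninger:extensions} and refined in \cite{huber-kings}. Since both ingredients are taken from the literature, the proof amounts to unwinding Beilinson's construction on the characteristic function $\phi_u$ and matching normalizations.

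First I would recall from \cite[(3.12), (3.28)]{deninger:extensions} the explicit form representing the Deligne class of the Eisenstein symbol $\Eis^n(\psi)$ attached to a general element $\psi \in V_n$. This representative is built from real-analytic Eisenstein-Kronecker series and decomposes along the Hodge type $(a,n-a)$, each component carrying the $n$-form $\psi_{a,n-a}$. The overall constant is fixed by the comparison theorem of \cite{huber-kings} between the motivic Eisenstein symbol and the elliptic polylogarithm. Next I would specialize $\psi = \omega^n(\phi_u)$: the $\GL_2(\Z/N\Z)$-equivariance from the lemma preceding the proposition reduces the computation on the connected component indexed by $g$ to the identity component with $u$ replaced by ${}^{t}g u$, while Fourier inversion on $(\Z/N\Z)^2$ converts the Bernoulli weight $B_{n+2}(\{v_2/N\})$ coming from the horospherical map into the character $\zeta_N^{c(gu)_1+d(gu)_2}$ appearing in the statement.

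To establish the differential identity $\dd \Eis^n_{\mathcal{D}}(u) = \pi_n(\Eis^n_{\hol}(u))$, I would differentiate the proposed representative directly. Working modulo $d\tau$ and $d\overline{\tau}$ as stipulated, only the derivative of the factor $(\tau-\overline{\tau})(c\overline{\tau}+d)^{-(n+1-a)}$ in the $\overline{\tau}$-direction contributes; summation over $a$ collapses the mixed-type terms, leaving only the holomorphic weight-$(n+2)$ series wedged with $\frac{dq}{q} \wedge dz_1 \wedge \cdots \wedge dz_n$, to which the complex-conjugation projector $\pi_n$ is then applied. This yields exactly the holomorphic Eisenstein series of weight $n+2$ attached to $u$, twisted by the $\GL_2(\Z/N\Z)$-action.

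The main obstacle is computational rather than conceptual: tracking the constant $-\frac{n!(n+2)}{2i\pi N}$ and ensuring that the antisymmetrization $\psi_{a,n-a}$ carries the correct sign and factorial. This constant results from the interaction of three normalizations -- that of Beilinson's Eisenstein symbol (responsible for the $(n+2)!$ factor in $\Eis^n_{\hol}(u)$), that of the horospherical map $\omega^n$ via the Bernoulli polynomial $B_{n+2}$, and the Hodge-theoretic cup-product conventions of \cite{huber-kings}. I would calibrate everything by testing the case $n=0$, where $\Eis^0(u)$ coincides with the Siegel unit $g_u \otimes \frac{2}{N}$ and both sides reduce to the classical formula for the Deligne regulator of a unit on the modular curve.
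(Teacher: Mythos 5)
Le texte de l'article ne donne pas de démonstration de cette proposition : il la déduit directement de \cite[(3.12), (3.28)]{deninger:extensions} et de \cite[Remark after Lemma 7.1]{huber-kings}, ce qui est exactement la stratégie que vous proposez (spécialiser le représentant explicite de Deninger à $\psi=\omega^n(\phi_u)$, fixer la normalisation via Huber--Kings, et vérifier l'identité différentielle ainsi que le cas $n=0$ des unités de Siegel). Votre proposition suit donc essentiellement la même voie que l'article, en explicitant simplement les vérifications que celui-ci laisse implicites.
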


\begin{remarque}
Pour $n=0$, les séries intervenant dans $\Eis^0_{\mathcal{D}}(u)$ et $\Eis^0_{\hol}(u)$ ne convergent pas absolument et on a recours à la sommation d'Eisenstein ou de Kronecker pour leur donner un sens \cite[Chap. VIII, \S 9]{weil}.
\end{remarque}

\section{Éléments de Deninger-Scholl}\label{deninger-scholl-gealy}

Dans cette section, nous rappelons la définition des éléments de Deninger-Scholl et calculons explicitement leurs réalisations en cohomologie de Deligne.

Soit $k \geq 0$ un entier. On choisit une décomposition $k=k_1+k_2$ avec $k_1,k_2 \geq 0$. Considérons les projections canoniques $p_1 : E^{k_1+k_2} \to E^{k_1}$ et $p_2 : E^{k_1+k_2} \to E^{k_2}$. Les éléments suivants, définis par Gealy dans \cite{gealy:thesis}, sont une généralisation des éléments de Beilinson-Kato et Deninger-Scholl.

\begin{definition}\label{def zkj}
Soient $u_1,u_2 \in (\Z/N\Z)^2$. Dans le cas où $k_i=0$, on suppose $u_i \neq 0$. On pose
\begin{equation}
\Eis^{k_1,k_2}(u_1,u_2) = p_1^* \Eis^{k_1}(u_1) \cup p_2^* \Eis^{k_2}(u_2) \in H^{k+2}_{\mathcal{M}}(E^k,\Q(k+2)).
\end{equation}
\end{definition}

Dans le cas $k=0$, on retrouve les éléments de Beilinson-Kato dans le $K_2$ de la courbe modulaire $Y(N)$ \cite{kato}. Notons également la relation $\Eis^{0,k}(u_2,u_1)=(-1)^{k+1} \Eis^{k,0}(u_1,u_2)$, qui découle du caractère commutatif gradué du cup-produit \cite[(1.3) Thm (2)]{deninger-scholl}.

Considérons le régulateur de Beilinson
\begin{equation*}
r_{\BB} : H^{k+2}_{\mathcal{M}}(E^k,\Q(k+2)) \to H^{k+2}_{\mathcal{D}}(E^k/\R,\R(k+2)).
\end{equation*}

Les éléments de $H^{k+2}_{\mathcal{D}}(E^k/\R,\R(k+2))$ sont représentés par des $(k+1)$-formes différentielles fermées sur $E^k(\CC)$. En effet, comme $E^k$ est de dimension $k+1$, il n'y a pas de $(k+2)$-forme holomorphe sur $E^k(\CC)$, et la proposition \ref{pro description HnXn} entraîne un isomorphisme
\begin{equation*}
H^{k+2}_{\mathcal{D}}(E^k/\R,\R(k+2)) \cong H^{k+1}(\mathcal{S}^\cdot(E^k/\R,\R(k+1))).
\end{equation*}

\begin{lem}
L'élément $r_{\BB}(\Eis^{k_1,k_2}(u_1,u_2))$ est représenté par la forme différentielle fermée
\begin{equation}
\begin{split}
\Eis_{\mathcal{D}}^{k_1,k_2}(u_1,u_2) & = p_1^* \Eis^{k_1}_{\mathcal{D}}(u_1) \wedge \pi_{k_2+1} (p_2^* \Eis^{k_2}_{\hol}(u_2)) \\
& \qquad + (-1)^{k_1+1} \pi_{k_1+1} (p_1^* \Eis^{k_1}_{\hol}(u_1)) \wedge p_2^* \Eis^{k_2}_{\mathcal{D}}(u_2).
\end{split}
\end{equation}
\end{lem}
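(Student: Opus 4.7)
The plan is to combine the functoriality and multiplicativity of the Beilinson regulator $r_{\BB}$ with the explicit cup-product formula in the representative description of Proposition \ref{pro description HnXn}. By naturality, for $i=1,2$ the class $r_{\BB}(p_i^{*}\Eis^{k_i}(u_i))$ is represented by the pulled-back form $p_i^{*}\Eis^{k_i}_{\mathcal{D}}(u_i)$ of Proposition \ref{pro EisD}, and since pullback commutes with both $d$ and the projection $\pi_{k_i}$, we have $d(p_i^{*}\Eis^{k_i}_{\mathcal{D}}(u_i)) = \pi_{k_i}(p_i^{*}\Eis^{k_i}_{\hol}(u_i))$.

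The core step is the explicit formula for the cup product transported through the isomorphism of Proposition \ref{pro description HnXn}. For classes represented by $\phi_i \in \mathcal{S}^{n_i}(X,\R(n_i))$ with $d\phi_i = \pi_{n_i}(\omega_i)$, Beilinson's cup product on the cone complex defining $\R_{\mathcal{D}}(\cdot)$ translates into the representative
\[
\phi_1 \wedge \pi_{n_2+1}(\omega_2) + (-1)^{n_1+1}\, \pi_{n_1+1}(\omega_1) \wedge \phi_2
\]
of the product class in $H^{n_1+n_2+2}_{\mathcal{D}}(X/\R,\R(n_1+n_2+2))$. The shift of index ($\pi_{n_i+1}$ instead of $\pi_{n_i}$) is precisely what produces a form valued in $\R(n_1+n_2+1)$, i.e.\ in the correct twist. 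Specialising to $\phi_i = p_i^{*}\Eis^{k_i}_{\mathcal{D}}(u_i)$ and $\omega_i = p_i^{*}\Eis^{k_i}_{\hol}(u_i)$ produces exactly the expression of the lemma.

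To confirm that the resulting $(k+1)$-form does define an element of $H^{k+1}(\mathcal{S}^{\cdot}(E^k/\R,\R(k+1)))$, one checks closedness. A routine Leibniz computation, using $d\phi_i = \pi_{k_i}(\omega_i)$ and the closedness of the holomorphic $\omega_i$, reduces the differential to
\[
\pi_{k_1}(\omega_1) \wedge \pi_{k_2+1}(\omega_2) + \pi_{k_1+1}(\omega_1) \wedge \pi_{k_2}(\omega_2) = \tfrac{1}{2}\bigl(\omega_1 \wedge \omega_2 + (-1)^{k+1}\, \overline{\omega_1 \wedge \omega_2}\bigr),
\]
because the two cross terms of the form $\omega_1 \wedge \overline{\omega_2}$ cancel by parity. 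The holomorphic $(k+2)$-form $\omega_1 \wedge \omega_2$ then vanishes: since each $p_i^{*}\Eis^{k_i}_{\hol}(u_i)$ contains the pulled-back factor $dq/q$ from the common base $Y(N)$, the wedge has a repeated $d\tau$; equivalently, $E^k(\CC)$ has complex dimension only $k+1$.

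The main obstacle is pinning down the cup-product formula with its correct signs and its twist shift $\pi_{n_i} \leadsto \pi_{n_i+1}$, by carefully unwinding Beilinson's product on the cone complex underlying $\R_{\mathcal{D}}(\cdot)$ through the simplified description of Proposition \ref{pro description HnXn}. Once this formula is in hand, the identification of representatives and the dimensional vanishing of $\omega_1 \wedge \omega_2$ complete the proof.
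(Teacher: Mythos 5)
Votre démonstration est correcte et suit essentiellement la même voie que l'article : la formule explicite du cup-produit en cohomologie de Deligne (celle de Deninger--Scholl (2.5), que vous reconstituez avec les bons signes et le décalage $\pi_{n_i}\leadsto\pi_{n_i+1}$), combinée à la proposition donnant les représentants $\Eis^{k_i}_{\mathcal{D}}(u_i)$ et à la fonctorialité du régulateur par image réciproque. Votre vérification supplémentaire de la fermeture, via $\pi_{k_1}(\omega_1)\wedge\pi_{k_2+1}(\omega_2)+\pi_{k_1+1}(\omega_1)\wedge\pi_{k_2}(\omega_2)=\pi_{k+1}(\omega_1\wedge\omega_2)$ et l'annulation de $\omega_1\wedge\omega_2$ pour raison de dimension, est exacte et explicite un point que l'article laisse implicite.
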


\begin{proof}
Cela résulte de la proposition \ref{pro EisD} et de la formule pour le cup-produit en cohomologie de Deligne \cite[(2.5)]{deninger-scholl}.
\end{proof}

\section{La méthode de Rogers-Zudilin}\label{sec RZ}

Rogers et Zudilin \cite{rogers-zudilin} ont introduit une nouvelle méthode permettant de calculer certaines mesures de Mahler en termes de valeurs de fonctions $L$. Cette méthode est basée sur un changement de variables astucieux dans une intégrale le long du symbole modulaire $\{0,\infty\}$. Dans cette section, nous expliquons cette méthode dans un cadre assez général (voir également l'interprétation proposée dans \cite{diamantis-neururer-stromberg}). L'identité obtenue (Proposition \ref{pro RZ trick}) est la clé de notre calcul du régulateur.

Introduisons, pour des fonctions $\alpha,\beta : \Z/N\Z \to \CC$ et des nombres complexes $t,u \in \CC$, la série suivante
\begin{equation}
S^{t,u}_{\alpha,\beta}(\tau) = \sum_{m \geq 1} \sum_{n \geq 1} \alpha(m) \beta(n) m^{t} n^{u} q^{mn/N} \qquad (q=e^{2i\pi \tau}).
\end{equation}
Cette série définit une fonction holomorphe sur $\h$.

\begin{lem}\label{lem Skl}
La fonction $S^{t,u}_{\alpha,\beta}(iy)$ décroît exponentiellement lorsque $y \to +\infty$, et croît au plus polynomialement en $\frac{1}{y}$ lorsque $y \to 0$.
\end{lem}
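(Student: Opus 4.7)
The plan is to reduce the lemma to a crude estimate on a positive double series. Since $\alpha$ and $\beta$ are bounded (being defined on the finite set $\Z/N\Z$), setting $C = \|\alpha\|_\infty \|\beta\|_\infty$, $\sigma = \Re(t)$ and $\rho = \Re(u)$, the triangle inequality yields
\begin{equation*}
|S^{t,u}_{\alpha,\beta}(iy)| \leq C \sum_{m,n \geq 1} m^\sigma n^\rho e^{-2\pi mn y/N} \qquad (y > 0),
\end{equation*}
so both required estimates reduce to bounding this positive series.

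For the decay at infinity, I would exploit that $mn \geq 1$ whenever $m,n \geq 1$ to factor out a fixed exponential. For $y \geq 1$, one has
\begin{equation*}
e^{-2\pi mn y/N} = e^{-2\pi y/N} \cdot e^{-2\pi(mn-1) y/N} \leq e^{-2\pi y/N} \cdot e^{-2\pi(mn-1)/N},
\end{equation*}
and the residual series $\sum_{m,n \geq 1} m^\sigma n^\rho e^{-2\pi(mn-1)/N}$ converges (e.g.\ via $mn-1 \geq (m-1)+(n-1)$, which splits the sum into a product of two convergent series). This yields $|S^{t,u}_{\alpha,\beta}(iy)| = O(e^{-2\pi y/N})$.

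For the growth at $y = 0^+$, I would regroup the double sum by the product $k = mn$ and write
\begin{equation*}
\sum_{m,n \geq 1} m^\sigma n^\rho e^{-2\pi mn y/N} = \sum_{k \geq 1} c_k e^{-2\pi ky/N}, \qquad c_k := \sum_{d \mid k} d^\sigma (k/d)^\rho.
\end{equation*}
An elementary divisor bound yields $c_k = O_\varepsilon(k^{A+\varepsilon})$ for some constant $A$ depending only on $\sigma, \rho$, reducing the problem to the classical estimate
\begin{equation*}
\sum_{k \geq 1} k^M e^{-\lambda k} = O(\lambda^{-M-1}) \qquad (\lambda \to 0^+),
\end{equation*}
valid for $M > -1$ and proved by comparison with $\int_0^\infty x^M e^{-\lambda x}\, \dd x = \Gamma(M+1)\lambda^{-M-1}$, applied with $\lambda = 2\pi y/N$. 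This gives the polynomial growth of $|S^{t,u}_{\alpha,\beta}(iy)|$ in $1/y$.

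There is no real obstacle here: the lemma is merely a statement about absolute convergence and about the asymptotics of a positive double series at the two cusps of the imaginary axis, both handled by termwise bounds. The only point demanding a moment's care is to resist invoking a modular transformation formula for $S^{t,u}_{\alpha,\beta}$, which would fail for generic complex exponents $t, u$; everything instead follows from the divisor estimate and an explicit majoration of the exponential factor.
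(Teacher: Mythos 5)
Your proof is correct, and for the estimate at $y \to 0$ it takes a genuinely different route from the paper. The paper first reduces to integer exponents via $m^{\Re(t)} n^{\Re(u)} \leq (mn)^k$ and $(mn)^k \leq \frac12(m^{2k}+n^{2k})$, recognizes the resulting single-exponent series $\sum_{m,n \geq 1} m^\ell e^{-2\pi mny}$ as (up to its constant term) a level-one holomorphic Eisenstein series of weight $\ell+1$, and reads off the bound $\ll y^{-\ell-1}$ from the transformation under $\tau \mapsto -1/\tau$. You instead regroup the double sum by $k=mn$, bound the coefficients by a divisor estimate, and compare with $\int_0^\infty x^M e^{-\lambda x}\,\dd x$. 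Your route is more elementary and self-contained: it avoids the small wrinkle that for $\ell=1$ the weight-$2$ Eisenstein series is only quasi-modular (the paper's identity $E(-1/\tau)=\tau^{\ell+1}E(\tau)$ acquires a polynomial correction there, harmless but unmentioned), at the cost of an $\varepsilon$ in the exponent, which is irrelevant for the lemma. Your treatment of the decay at $+\infty$, factoring out $e^{-2\pi y/N}$ via $mn-1 \geq (m-1)+(n-1)$, is an explicit version of what the paper dismisses as immediate from the definition. One remark: your closing caution about modular transformation formulas is well taken for generic complex $t,u$, but the paper's appeal to modularity is legitimate because it occurs only after the reduction to integer exponents.
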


\begin{proof}
La décroissance exponentielle lorsque $y \to +\infty$ résulte de la définition. Pour établir la croissance polynomiale lorsque $y \to 0$, il suffit de le faire pour la série
\begin{equation*}
\sum_{m,n \geq 1} (mn)^k e^{-2\pi mn y}.
\end{equation*}
où $k$ est un entier naturel. Puisque $(mn)^k \leq \frac12 (m^{2k}+n^{2k})$, on est ramené au cas de la série
\begin{equation}\label{lem Skl serie}
\sum_{m,n \geq 1} m^\ell e^{-2\pi mn y}
\end{equation}
où $\ell$ est un entier naturel, que l'on peut supposer impair. Il s'agit alors, au terme constant près, d'une série d'Eisenstein holomorphe $E$ de poids $\ell+1$ pour le groupe $\SL_2(\Z)$. Puisque $E(-1/\tau)=\tau^{\ell+1} E(\tau)$, la série (\ref{lem Skl serie}) est $\ll y^{-\ell-1}$ lorsque $y \to 0$.
\end{proof}

\begin{lem}\label{lem mellin Skl}
Pour $s \in \CC$, $\Re(s) \gg 0$, la fonction $y \mapsto S^{t,u}_{\alpha,\beta}(iy) y^{s-1}$ est intégrable sur $]0,+\infty[$, et on a
\begin{equation*}
\int_0^\infty S^{t,u}_{\alpha,\beta}(iy) y^s \frac{\dd y}{y} = (\frac{2\pi}{N})^{-s} \Gamma(s) L(\alpha,s-t) L(\beta,s-u)
\end{equation*}
Pour $s \in \CC$, $\Re(s) \ll 0$, la fonction $y \mapsto S^{t,u}_{\alpha,\beta}(\frac{i}{y}) y^{s-1}$ est intégrable sur $]0,+\infty[$, et on a
\begin{equation*}
\int_0^\infty S^{t,u}_{\alpha,\beta}(\frac{i}{y}) y^s \frac{\dd y}{y} = (\frac{2\pi}{N})^s \Gamma(-s) L(\alpha,-s-t) L(\beta,-s-u)
\end{equation*}
\end{lem}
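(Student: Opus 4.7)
My plan is to reduce both integrals to standard gamma-function identities by expanding $S^{t,u}_{\alpha,\beta}$ as a Dirichlet series, exchanging sum and integral, and then summing back. The hypotheses on $\Re(s)$ are exactly what is needed to justify the exchange via Fubini's theorem, and the growth estimates of Lemma \ref{lem Skl} ensure integrability of the integrand at both ends of $]0,\infty[$.

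For the first formula, I substitute
\[
S^{t,u}_{\alpha,\beta}(iy) = \sum_{m,n \geq 1} \alpha(m)\beta(n)\, m^t n^u\, e^{-2\pi m n y/N}
\]
and write
\[
\int_0^\infty S^{t,u}_{\alpha,\beta}(iy)\, y^s\, \frac{\dd y}{y} = \sum_{m,n \geq 1} \alpha(m)\beta(n)\, m^t n^u \int_0^\infty e^{-2\pi mn y/N}\, y^s\, \frac{\dd y}{y}.
\]
The standard change of variable $y \mapsto Ny/(2\pi mn)$ evaluates the inner integral as $\Gamma(s)(2\pi mn/N)^{-s}$, and factoring $\Gamma(s)(2\pi/N)^{-s}$ out of the double sum produces $L(\alpha, s-t)\, L(\beta, s-u)$ by definition of the Dirichlet series attached to $\alpha$ and $\beta$. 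Absolute convergence of the double series, required to apply Fubini, holds for $\Re(s) > \max(\Re(t), \Re(u)) + 1$ since $\alpha$ and $\beta$ are bounded (being functions on the finite set $\Z/N\Z$); integrability of $|S^{t,u}_{\alpha,\beta}(iy)|\, y^{\Re(s)-1}$ near $y=0$ and $y=+\infty$ is then guaranteed by Lemma \ref{lem Skl}, for $\Re(s)$ sufficiently large.

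The second formula follows at once from the first by the change of variable $y \mapsto 1/y$, which transforms
\[
\int_0^\infty S^{t,u}_{\alpha,\beta}(i/y)\, y^s\, \frac{\dd y}{y} = \int_0^\infty S^{t,u}_{\alpha,\beta}(iy)\, y^{-s}\, \frac{\dd y}{y},
\]
so that the condition $\Re(s) \ll 0$ becomes $\Re(-s) \gg 0$ and applying the first part at $-s$ in place of $s$ yields directly $(2\pi/N)^s \Gamma(-s)\, L(\alpha, -s-t)\, L(\beta, -s-u)$.

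There is no real obstacle here: the argument is a routine Mellin-transform calculation. The only point requiring care is the application of Fubini's theorem, for which Lemma \ref{lem Skl} provides exactly the growth bounds that are needed.
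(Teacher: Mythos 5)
Votre preuve est correcte et suit exactement la même démarche que celle de l'article : interversion somme--intégrale justifiée par le lemme \ref{lem Skl} et la convergence absolue, changement de variable $y' = 2\pi mn y/N$ ramenant l'intégrale intérieure à $\Gamma(s)(2\pi mn/N)^{-s}$, puis déduction de la seconde formule par $y \mapsto 1/y$. Rien à redire.
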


\begin{proof}
D'après le lemme \ref{lem Skl}, la première intégrale converge pour $\Re(s) \gg 0$. On intervertit somme et intégrale, et on effectue le changement de variable $y'=\frac{2\pi mny}{N}$ ; un calcul simple mène alors au résultat. La seconde assertion résulte de la première après le changement de variable $y \mapsto 1/y$.
\end{proof}

\begin{lem}\label{lem int bessel}
Pour tout $k \in \Z$, il existe des constantes $C>0$ et $\mu,\nu \geq 0$ telles que pour tous réels $a,b \geq 1$, on ait
\begin{equation*}
\int_0^\infty e^{-ay-b/y} y^k \frac{\dd y}{y} \leq C a^\mu b^\nu e^{-\sqrt{ab}}.
\end{equation*}
\end{lem}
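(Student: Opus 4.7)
The plan is to exploit the natural symmetry of the integrand under the involution $y \mapsto b/(ay)$. I would begin with the substitution $y = \sqrt{b/a}\,t$, which carries $\dd y/y$ to $\dd t/t$ and, most importantly, converts the exponent $-ay - b/y$ into the symmetric expression $-\sqrt{ab}\,(t + 1/t)$. A direct calculation yields
\[
\int_0^\infty e^{-ay - b/y} y^k \frac{\dd y}{y} = (b/a)^{k/2} \int_0^\infty e^{-\sqrt{ab}\,(t + 1/t)} t^{k-1} \dd t.
\]

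The heart of the argument is then to extract the factor $e^{-\sqrt{ab}}$ by splitting the exponential into two equal halves, $e^{-\sqrt{ab}(t+1/t)/2} \cdot e^{-\sqrt{ab}(t+1/t)/2}$. The hypothesis $a, b \geq 1$ gives $\sqrt{ab} \geq 1$, and AM-GM gives $(t + 1/t)/2 \geq 1$ for $t > 0$. Bounding the first copy using $(t+1/t)/2 \geq 1$ and the second using $\sqrt{ab} \geq 1$ yields the pointwise inequality $e^{-\sqrt{ab}(t+1/t)} \leq e^{-\sqrt{ab}} \cdot e^{-(t+1/t)/2}$. Integrating, I get
\[
\int_0^\infty e^{-\sqrt{ab}\,(t+1/t)} t^{k-1} \dd t \leq e^{-\sqrt{ab}} \cdot C_k, \qquad C_k := \int_0^\infty e^{-(t+1/t)/2} t^{k-1} \dd t,
\]
and the constant $C_k$ is finite for every $k \in \Z$, since the integrand decays exponentially both at $t \to 0^+$ (thanks to $e^{-1/(2t)}$) and at $t \to +\infty$ (thanks to $e^{-t/2}$).

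It remains to absorb the prefactor $(b/a)^{k/2}$ into the polynomial factor $a^\mu b^\nu$. Using $a, b \geq 1$, one takes $(\mu, \nu) = (0, k/2)$ when $k \geq 0$ and $(\mu, \nu) = (-k/2, 0)$ when $k < 0$. No serious obstacle appears in this lemma: the only thing to notice is that the symmetrizing change of variable is precisely the one that transforms the hypothesis $a, b \geq 1$ into the usable inequality $\sqrt{ab} \geq 1$, which then does all of the work of separating the required exponential decay $e^{-\sqrt{ab}}$ from a bounded remainder.
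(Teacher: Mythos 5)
Your proof is correct, but it takes a genuinely different route from the paper's. The paper splits the \emph{domain} of integration at the critical point $y=\sqrt{b/a}$, uses the involution $y \mapsto b/(ay)$ to reduce to the half-line $[\sqrt{b/a},+\infty[$, discards the factor $e^{-b/y}$ there, rescales to an incomplete Gamma integral $\int_{\sqrt{ab}}^\infty e^{-y}y^{k-1}\,\dd y$, and then handles that by a direct bound for $k\leq 1$ and by integration by parts plus induction on $k$ for $k>1$. You instead symmetrize first via $y=\sqrt{b/a}\,t$ and then split the \emph{exponential} into two equal halves, one of which yields the uniform factor $e^{-\sqrt{ab}}$ by AM--GM and the other of which, using $\sqrt{ab}\geq 1$, dominates a fixed convergent integral $C_k=\int_0^\infty e^{-(t+1/t)/2}t^{k-1}\,\dd t$ independent of $a$ and $b$. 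Your version avoids the case distinction on the sign of $k$ and the induction entirely, and it makes transparent where the polynomial factor $a^{\mu}b^{\nu}$ comes from (solely the Jacobian prefactor $(b/a)^{k/2}$, absorbed using $a,b\geq 1$); the constant $C_k$ is moreover explicit, being $2K_k(1)$ in terms of the modified Bessel function. The paper's version uses only termwise comparison and integration by parts, at the cost of a slightly longer bookkeeping. Both arguments are complete and yield the stated estimate.
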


\begin{proof}
La fonction $f : y \mapsto ay+b/y$ admet un minimum en $y=\sqrt{b/a}$, où elle prend la valeur $2\sqrt{ab}$. On découpe l'intégrale en deux : $\int_0^\infty = \int_0^{\sqrt{b/a}} + \int_{\sqrt{b/a}}^\infty$. En faisant le changement de variables $y'=b/(ay)$, il suffit de majorer l'intégrale sur $[\sqrt{b/a},+\infty[$. On a
\begin{equation*}
\int_{\sqrt{b/a}}^\infty e^{-ay-b/y} y^k \frac{\dd y}{y} \leq \int_{\sqrt{b/a}}^\infty e^{-ay} y^k \frac{\dd y}{y} \leq a^{-k} \int_{\sqrt{ab}}^\infty e^{-y} y^k \frac{\dd y}{y}.
\end{equation*}
Si $k \leq 1$, on a $y^{k-1} \leq (ab)^{(k-1)/2}$ d'où la majoration
\begin{equation*}
\int_{\sqrt{b/a}}^\infty e^{-ay-b/y} y^k \frac{\dd y}{y} \leq a^{-k} (ab)^{(k-1)/2} e^{-\sqrt{ab}}.
\end{equation*}
Si $k > 1$, une intégration par parties donne
\begin{equation*}
\int_{\sqrt{ab}}^\infty e^{-y} y^{k-1} \dd y = (ab)^{(k-1)/2} e^{-\sqrt{ab}} + (k-1) \int_{\sqrt{ab}}^\infty e^{-y} y^{k-2} \dd y.
\end{equation*}
Une récurrence sur $k$ donne alors la majoration voulue.
\end{proof}

Nous pouvons maintenant énoncer la proposition-clé à la base de la méthode de Rogers-Zudilin.

\begin{pro}\label{pro RZ trick}
Soient $t_1,u_1,t_2,u_2,s \in \CC$ des nombres complexes, et soient $\alpha_1,\beta_1,\alpha_2,\beta_2 : \Z/N\Z \to \CC$ des fonctions. On a
\begin{equation}\label{eq RZ trick}
\int_0^\infty S^{t_1,u_1}_{\alpha_1,\beta_1}\left(\frac{i}{y}\right) S^{t_2,u_2}_{\alpha_2,\beta_2}(iy) y^s \frac{\dd y}{y} = \int_0^\infty S^{t_1+s,t_2}_{\alpha_1,\alpha_2}(iy) S^{u_1,u_2-s}_{\beta_1,\beta_2}\left(\frac{i}{y}\right) y^s \frac{\dd y}{y}.
\end{equation}
\end{pro}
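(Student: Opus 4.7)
Le plan est de développer les deux membres comme des séries quadruples indexées par $(m_1,n_1,m_2,n_2) \in \Z_{\geq 1}^4$ et de réduire l'identité à un seul changement de variable effectué dans l'intégrale intérieure. Il convient d'abord de vérifier la convergence absolue partout, afin de pouvoir appliquer Fubini sans réserve. D'après le lemme \ref{lem Skl}, chaque facteur $S^{t,u}_{\alpha,\beta}(iy)$ décroît exponentiellement lorsque $y \to +\infty$ et croît au plus polynomialement lorsque $y \to 0^+$ ; ainsi, dans les deux membres de (\ref{eq RZ trick}), le produit des deux fonctions $S$ décroît exponentiellement aux deux extrémités de $(0,\infty)$, et les intégrales extérieures convergent absolument pour tout $s \in \CC$. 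Pour justifier l'interversion de la somme quadruple et de l'intégrale, j'invoquerais le lemme \ref{lem int bessel} appliqué avec $a = 2\pi m_2 n_2/N$ et $b = 2\pi m_1 n_1/N$, qui majore l'intégrale de chaque terme par un polynôme en les $m_i,n_i$ fois $e^{-(4\pi/N)\sqrt{m_1 n_1 m_2 n_2}}$ ; la borne obtenue reste sommable contre les poids polynomiaux initiaux $m_1^{t_1} n_1^{u_1} m_2^{t_2} n_2^{u_2}$.

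Après application de Fubini, le membre de gauche s'écrit
\begin{equation*}
\sum_{m_1,n_1,m_2,n_2 \geq 1} \alpha_1(m_1)\beta_1(n_1)\alpha_2(m_2)\beta_2(n_2)\, m_1^{t_1} n_1^{u_1} m_2^{t_2} n_2^{u_2} \int_0^\infty e^{-\frac{2\pi m_1 n_1}{N y} - \frac{2\pi m_2 n_2 y}{N}} y^s \frac{\dd y}{y}.
\end{equation*}
L'astuce de Rogers--Zudilin proprement dite est la substitution $y \mapsto (m_1/n_2)\, y$ effectuée dans chaque intégrale. Elle transforme l'exposant $-\frac{2\pi m_1 n_1}{N y} - \frac{2\pi m_2 n_2 y}{N}$ en $-\frac{2\pi n_1 n_2}{N y} - \frac{2\pi m_1 m_2 y}{N}$, tandis que le facteur $y^s$ fait apparaître un jacobien $(m_1/n_2)^s$ qui se combine avec $m_1^{t_1} n_2^{u_2}$ pour donner $m_1^{t_1+s} n_2^{u_2-s}$. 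En regroupant ensuite la somme en appariant $(m_1,m_2)$ avec $\alpha_1,\alpha_2$ et $(n_1,n_2)$ avec $\beta_1,\beta_2$, on réassemble l'expression en $\int_0^\infty S^{t_1+s,t_2}_{\alpha_1,\alpha_2}(iy)\, S^{u_1,u_2-s}_{\beta_1,\beta_2}(i/y)\, y^s \frac{\dd y}{y}$, qui est le membre de droite.

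Je ne prévois pas d'obstacle sérieux. Le contenu conceptuel de la proposition est la symétrie discrète effectuée par $y \mapsto (m_1/n_2)\, y$, uniquement déterminée par l'exigence d'échanger simultanément $m_1 n_1$ avec $n_1 n_2$ et $m_2 n_2$ avec $m_1 m_2$ ; l'identité a alors lieu terme à terme après la substitution, et tout l'apport analytique est fourni par les lemmes \ref{lem Skl} et \ref{lem int bessel}.
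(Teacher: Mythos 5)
Votre preuve est correcte et suit essentiellement la même démarche que celle du papier : convergence absolue via le lemme \ref{lem Skl}, interversion somme--intégrale justifiée par le lemme \ref{lem int bessel}, puis le changement de variables $y \mapsto (m_1/n_2)y$ (écrit $y'=(n_2/m_1)y$ dans le texte) suivi du réappariement des indices. Rien à redire.
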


\begin{proof}
Remarquons que ces intégrales convergent absolument d'après le lemme \ref{lem Skl}. Par définition, on a
\begin{equation*}
S^{t_1,u_1}_{\alpha_1,\beta_1}\left(\frac{i}{y}\right) S^{t_2,u_2}_{\alpha_2,\beta_2}(iy) = \sum_{\substack{m_1,n_1 \geq 1 \\ m_2,n_2 \geq 1}} \alpha_1(m_1) \beta_1(n_1) \alpha_2(m_2) \beta_2(n_2) m_1^{t_1} n_1^{u_1} m_2^{t_2} n_2^{u_2} e^{-\frac{2\pi}{N}\left(\frac{m_1 n_1}{y}+m_2 n_2 y\right)}.
\end{equation*}
Grâce au lemme \ref{lem int bessel}, on peut intervertir l'intégrale et la sommation dans le membre de gauche de (\ref{eq RZ trick}). En effectuant le changement de variables $y' = (n_2/m_1)y$, il vient
\begin{equation*}
\int_0^\infty \exp \left(-\frac{2\pi}{N} \left(\frac{m_1 n_1}{y}+m_2 n_2 y\right)\right) y^s \frac{\dd y}{y} = \left(\frac{m_1}{n_2}\right)^s \int_0^\infty \exp \left(-\frac{2\pi}{N} \left(m_1 m_2 y + \frac{n_1 n_2}{y}\right)\right) y^s \frac{\dd y}{y}.
\end{equation*}
En échangeant à nouveau intégrale et sommation, on obtient le résultat.
\end{proof}

\section{Quelques développements de Fourier}\label{sec fourier}

Dans cette section, nous calculons le développement de Fourier de la réalisation du symbole d'Eisenstein. Commençons par rappeler le développement de Fourier de $\Eis^n_{\mathrm{hol}}(u)$, qui a en fait déjà été déterminé dans la section \ref{sec EK}.

\begin{pro}\label{pro Enu}
Soit $n \geq 0$ un entier, et soit $u \in (\Z/N\Z)^2$ (avec $u \neq 0$ si $n=0$). Alors
\begin{equation*}
\Eis^n_{\mathrm{hol}}(u) = (-1)^{n+1} \frac{n+2}{N} (2i\pi)^n F^{(n+2)}_{\sigma gu}(\tau) \frac{\dd q}{q} \wedge \dd z_1 \wedge \cdots \wedge \dd z_n.
\end{equation*}
\end{pro}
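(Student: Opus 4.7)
The plan is to deduce the proposition by substituting the definitions of $\Eis^n_{\hol}(u)$ from Proposition \ref{pro EisD} and of $F^{(n+2)}_{\sigma gu}$ from Section \ref{sec EK}, and then matching the series expansions and constants term by term. Since both objects are given by explicit sums over $(c,d) \in \Z^2 \setminus \{0\}$, the verification reduces to a bookkeeping of exponents, factorials, and signs, together with an identification of the characters attached to the lattice point $\sigma gu$.

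First I would unfold $F^{(n+2)}_{a,b}(\tau) = \mathcal{K}_{n+2}\bigl(n+2,\tau,0,\tfrac{a\tau+b}{N}\bigr)$. With $\omega = c\tau+d$ and $z=0$, the factor $\overline{\omega}^{n+2}/|\omega|^{2(n+2)}$ simplifies to $1/(c\tau+d)^{n+2}$. A short calculation shows that the exponential
\[
\exp\!\left(\frac{2i\pi(\omega \overline{u^*} - \overline{\omega} u^*)}{\tau-\overline{\tau}}\right), \qquad u^* = \frac{a\tau+b}{N},
\]
collapses, after expanding and using $(\tau-\overline{\tau})$ in the denominator, to $\zeta_N^{cb-da}$. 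This gives
\[
F^{(n+2)}_{a,b}(\tau) = \frac{(n+1)!}{(-2i\pi)^{n+2}} \sideset{}{'}\sum_{(c,d) \in \Z^2} \frac{\zeta_N^{cb-da}}{(c\tau+d)^{n+2}}.
\]

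Next I would insert $(a,b) = \sigma gu$. Since $\sigma = \left(\begin{smallmatrix} 0 & -1 \\ 1 & 0 \end{smallmatrix}\right)$, one has $a = -(gu)_2$, $b = (gu)_1$, and consequently $\zeta_N^{cb-da} = \zeta_N^{c(gu)_1 + d(gu)_2}$. Comparing this to the formula of Proposition \ref{pro EisD} gives agreement of the series. The remaining check is a constant: the right-hand side of the proposition carries the factor $(-1)^{n+1}(n+2)(2i\pi)^n/N$ in front of $F^{(n+2)}_{\sigma gu}$, and after combining with $(n+1)!/(-2i\pi)^{n+2}$ one finds
\[
(-1)^{n+1}\,\frac{(n+2)!}{N}\,\frac{(2i\pi)^n}{(-2i\pi)^{n+2}} = -\frac{(n+2)!}{(2i\pi)^2 N},
\]
which is precisely the constant appearing in Proposition \ref{pro EisD}.

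The only point requiring comment is the case $n=0$, where the double sum defining $F^{(2)}_{a,b}$ is only conditionally convergent; the equality then has to be read through the analytic continuation of $\mathcal{K}_2(s,\tau,0,u^*)$ at $s=2$. This is the same regularization (Eisenstein--Kronecker summation) that is used to define $\Eis^0_{\hol}(u)$ in the remarque following Proposition \ref{pro EisD}, so the two sides continue to agree. I expect no serious obstacle: the computation is a direct comparison, and the only subtlety is this bookkeeping of the summation convention in the boundary case $n=0$.
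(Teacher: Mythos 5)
Your argument is correct and is essentially the paper's own proof: both identify the lattice sum in Proposition \ref{pro EisD} with $\frac{(-2i\pi)^{n+2}}{(n+1)!}\,\mathcal{K}_{n+2}\bigl(n+2,\tau,0,\tfrac{-(gu)_2\tau+(gu)_1}{N}\bigr) = \frac{(-2i\pi)^{n+2}}{(n+1)!}\,F^{(n+2)}_{\sigma gu}(\tau)$ via the computation $\omega\overline{u^*}-\overline{\omega}u^* = (cb-da)\frac{\tau-\overline{\tau}}{N}$, and then match constants. Your explicit remark on the conditionally convergent case $n=0$ is a small addition the paper leaves to its earlier remarque, but the route is the same.
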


\begin{proof}
Par définition, on a
\begin{align*}
\sideset{}{'}\sum_{(c,d) \in \Z^2} \frac{\zeta_N^{c(gu)_1+d(gu)_2}}{(c\tau+d)^{n+2}} & = \frac{(-2i\pi)^{n+2}}{(n+1)!} \mathcal{K}_{n+2}\left(n+2,\tau,0,\frac{-(gu)_2 \tau+(gu)_1}{N}\right) \\
& = \frac{(-2i\pi)^{n+2}}{(n+1)!} F^{(n+2)}_{-(gu)_2,(gu)_1}(\tau) \\
& = \frac{(-2i\pi)^{n+2}}{(n+1)!} F^{(n+2)}_{\sigma gu}(\tau).
\end{align*}
On en déduit le résultat.
\end{proof}

La forme différentielle $\Eis^n_{\mathcal{D}}(u)$ étant invariante par $\tau \mapsto \tau+N$, elle possède un développement de Fourier en la variable $q^{1/N}=e^{2i\pi \tau/N}$.

Introduisons, pour des entiers $a,b \geq 0$, et $u=(u_1,u_2) \in (\Z/N\Z)^2$, les séries d'Eisenstein analytiques-réelles suivantes
\begin{equation*}
E^{a,b}_u(\tau) = \sideset{}{'}\sum_{(m,n) \equiv (u_1,u_2)(N)} \frac{1}{(m\tau+n)^{a+1} (m\overline{\tau}+n)^{b+1}} \qquad (\tau \in \h).
\end{equation*}
\begin{equation*}
F^{a,b}_u(\tau) = \sideset{}{'}\sum_{(m,n) \in \Z^2} \frac{\zeta_N^{mu_1+nu_2}}{(m\tau+n)^{a+1} (m\overline{\tau}+n)^{b+1}} \qquad (\tau \in \h).
\end{equation*}
On a donc
\begin{equation}\label{Fab Eab}
F^{a,b}_u = \sum_{x,y \in \Z/N\Z} \zeta_N^{xu_1+yu_2} E^{a,b}_{(x,y)}.
\end{equation}
On a alors
\begin{equation}\label{formule EisnD Fgu}
\Eis^n_{\mathcal{D}}(u) = -\frac{n! (n+2)}{2i\pi N} \frac{\tau-\overline{\tau}}{2} \sum_{a=0}^n F^{a,n-a}_{gu}(\tau) \psi_{a,n-a} \mod{\dd\tau,\dd\overline{\tau}}.
\end{equation}

Pour des entiers $k,\ell \in \Z$ et des fonctions $\alpha,\beta : \Z/N\Z \to \CC$, nous poserons également
\begin{equation*}
\overline{S}^{k,\ell}_{\alpha,\beta}(\tau) = \overline{S^{k,\ell}_{\alpha,\beta}(\tau)} = \sum_{m \geq 1} \sum_{n \geq 1} \overline{\alpha}(m) \overline{\beta}(n) m^k n^\ell \overline{q}^{mn/N}.
\end{equation*}

\begin{pro}\label{fourier Eab}
Pour $a,b \geq 0$ et $u_1,u_2 \in \Z/N\Z$, on a
\begin{equation}
\begin{split}
E^{a,b}_{(u_1,u_2)}(\tau) & = \delta_0(u_1) N^{-a-b-2} \left(\zeta\left(\frac{u_2}{N},a+b+2\right)+(-1)^{a+b} \zeta\left(-\frac{u_2}{N},a+b+2\right)\right)\\
& \quad + (-1)^b \frac{2i\pi}{N^{a+b+2}} \begin{pmatrix} a+b \\ a \end{pmatrix} \left(\zeta\left(\frac{u_1}{N},a+b+1\right)+(-1)^{a+b} \zeta\left(-\frac{u_1}{N},a+b+1\right)\right) (\tau-\overline{\tau})^{-a-b-1}\\
& \quad + \frac{(-1)^{b+1}}{b!} \sum_{j=0}^{a} \frac{(a+b-j)!}{j! (a-j)!} \left(-\frac{2i\pi}{N}\right)^{j+1} (\tau-\overline{\tau})^{-a-b-1+j} \bigl(S^{j-a-b-1,j}_{\delta_{u_1},\hat{\delta}_{-u_2}}(\tau)+(-1)^{a+b} S^{j-a-b-1,j}_{\delta_{-u_1},\hat{\delta}_{u_2}}(\tau) \bigr)\\
& \quad + \frac{(-1)^{b+1}}{a!} \sum_{j=0}^{b} \frac{(a+b-j)!}{j! (b-j)!} \left(-\frac{2i\pi}{N}\right)^{j+1} (\tau-\overline{\tau})^{-a-b-1+j} \bigl(\overline{S}^{j-a-b-1,j}_{\delta_{u_1},\hat{\delta}_{-u_2}}(\tau)+(-1)^{a+b} \overline{S}^{j-a-b-1,j}_{\delta_{-u_1},\hat{\delta}_{u_2}}(\tau) \bigr).
\end{split}
\end{equation}
\end{pro}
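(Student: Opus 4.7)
The plan is to split the sum over $m$ into the three ranges $m=0$, $m \geq 1$, $m \leq -1$, and for each nonzero $m$ to evaluate the inner sum over $n \equiv u_2 \pmod N$ by partial fractions followed by Lipschitz summation.

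First, the $m=0$ contribution is nonzero only if $u_1=0$, in which case it equals $\sum_{n \equiv u_2(N),\, n \neq 0} n^{-(a+b+2)}$; separating positive and negative $n$ yields the first line of the formula.

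For $m \neq 0$, set $X = m\tau+n$, $Y = m\bar\tau+n$, and $t = m(\tau-\bar\tau)$, so $Y = X-t$. A standard partial-fraction decomposition in $X$ gives
\begin{equation*}
\frac{1}{X^{a+1}Y^{b+1}} = \sum_{j=1}^{a+1} \frac{(-1)^{b+1}\binom{a+b+1-j}{b}}{t^{a+b+2-j}X^j} + \sum_{k=1}^{b+1}\frac{(-1)^{b+1-k}\binom{a+b+1-k}{a}}{t^{a+b+2-k}Y^k}.
\end{equation*}
This reduces the problem to evaluating $\sum_{n \equiv u_2(N)}(m\tau+n)^{-\ell}$ and $\sum_{n \equiv u_2(N)}(m\bar\tau+n)^{-\ell}$ for $\ell \geq 1$. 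For $m \geq 1$ and $\ell \geq 2$, the Lipschitz summation formula yields
\begin{equation*}
\sum_{n \equiv u_2(N)} (m\tau+n)^{-\ell} = \frac{(-2\pi i)^\ell}{N^\ell (\ell-1)!} \sum_{n \geq 1} n^{\ell-1} \zeta_N^{nu_2} q^{mn/N},
\end{equation*}
with analogous formulas for $m \leq -1$, for the sums in $\bar\tau$, and (by complex conjugation) for the terms in $Y$. Substituting these into the partial-fraction identity, multiplying by the coefficients above (noting $t^{a+b+2-j} = m^{a+b+2-j}(\tau-\bar\tau)^{a+b+2-j}$), summing over $m \equiv u_1 \pmod N$ of both signs, and reindexing $j \mapsto j-1$, $k \mapsto k-1$, the double sums in $m,n$ collapse exactly into the $S$- and $\overline{S}$-series in the last two lines of the formula. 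The two signs of $m$ contribute respectively the pairs $(\delta_{u_1}, \hat\delta_{-u_2})$ and $(\delta_{-u_1}, \hat\delta_{u_2})$, with the announced relative sign $(-1)^{a+b}$.

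The main obstacle is the boundary case $\ell = 1$ (i.e.\ $j = 1$ or $k = 1$): the Lipschitz formula then has an extra constant $\mp \pi i / N$ coming from the expansion of $\pi \cot(\pi z)$. This constant has no $q$-dependence, so after multiplication by $\binom{a+b}{a}/t^{a+b+1}$ and summation over $m \equiv u_1 \pmod N$ of both signs, it produces a term proportional to $(\tau-\bar\tau)^{-a-b-1}$ involving Hurwitz zeta values. Keeping track carefully of the signs---from $t^{a+b+1} = (-m)^{a+b+1}(\tau-\bar\tau)^{a+b+1}$ when $m < 0$, from the two branches of Lipschitz, and from $(-1)^{b+1-k}$ in the $Y$-sum---the four contributions (two from the $X$-part, two from the $Y$-part) double up via $\binom{a+b}{a}=\binom{a+b}{b}$ and assemble into the second line of the formula, completing the proof.
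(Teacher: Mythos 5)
Your argument is correct and arrives at the stated expansion, but it is organized differently from the paper's proof, so a comparison is worthwhile. The paper follows the classical Schoeneberg route: it forms the $1$-periodic function $\phi_{a,b,\tau}(x)=\sum_{n}(\tau+n+x)^{-a-1}(\overline{\tau}+n+x)^{-b-1}$, computes each Fourier coefficient $c_r$ as a contour integral evaluated by the residue at $z=-\tau$ (for $r\geq 1$) or at $z=-\overline{\tau}$ (for $r\leq -1$), with the Leibniz formula for the $a$-th derivative of $(z+\overline{\tau})^{-b-1}e^{-2i\pi rz}$ producing the combinatorial factors, and treats $r=0$ as a separate residue computation. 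You instead perform an algebraic partial-fraction decomposition of $X^{-a-1}Y^{-b-1}$ into pure powers of $X=m\tau+n$ and $Y=m\overline{\tau}+n$ and then invoke the Lipschitz summation formula for each power. The two computations are term-by-term equivalent: the Taylor coefficients $(-1)^{b+1}\binom{a+b+1-j}{b}$ and $(-1)^{b+1-k}\binom{a+b+1-k}{a}$ in your decomposition are precisely the Leibniz coefficients of the paper's residue calculation (I checked that after the shift $j\mapsto j-1$ they reproduce $\frac{(a+b-j)!}{j!(a-j)!}$ and $\frac{(a+b-j)!}{j!(b-j)!}$ with the right powers of $-2i\pi/N$), and your derivation of the $(\tau-\overline{\tau})^{-a-b-1}$ line from the constants $\mp i\pi/N$ in the $\ell=1$ cotangent case — doubled via $\binom{a+b}{a}=\binom{a+b}{b}$, with sign $(-1)^a$ on the $m<0$ branch — recovers exactly the paper's $r=0$ coefficient $c_0(\phi_{a,b,\tau})=(-1)^b2i\pi\binom{a+b}{a}(\tau-\overline{\tau})^{-a-b-1}$. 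What your route buys is the elimination of contour integration in favour of a quotable summation formula; what it costs is a separate treatment of the boundary case $\ell=1$, which the paper's residue computation handles uniformly with all other $r$. One caveat, which applies equally to the paper's own proof: for $a=b=0$ the double series defining $E^{0,0}_u$ is not absolutely convergent and $\zeta(\pm\frac{u_1}{N},s)$ has a pole at $s=1$, so the interchanges of summation are only literally justified for $a+b\geq 1$; the remaining case requires the Eisenstein--Kronecker summation alluded to in the remark following Proposition \ref{pro EisD}.
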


\begin{proof}
On suit la méthode classique \cite[Chap III, \S 2]{schoeneberg} pour déterminer le développement de Fourier des séries d'Eisenstein. Pour $a,b \geq 0$, $\tau \in \CC-\R$ et $x \in \R$, posons
\begin{equation*}
\phi_{a,b,\tau}(x) = \sum_{n \in \Z} \frac{1}{(\tau+n+x)^{a+1} (\overline{\tau}+n+x)^{b+1}}
\end{equation*}
Cette série converge et définit une fonction $1$-périodique de $x$. Notons
\begin{equation*}
\phi_{a,b,\tau}(x)=\sum_{r \in \Z} c_r(\phi_{a,b,\tau}) e^{2i\pi rx}
\end{equation*}
son développement de Fourier. On a
\begin{align*}
c_r(\phi_{a,b,\tau}) & = \int_0^1 \phi_{a,b,\tau}(x) e^{-2i\pi rx} dx \\
& = \int_0^1 \sum_{n \in \Z} \frac{ e^{-2i\pi rx} }{(\tau+n+x)^{a+1} (\overline{\tau}+n+x)^{b+1}} dx \\
& = \int_{-\infty}^{\infty} \frac{ e^{-2i\pi rz} }{(\tau+z)^{a+1} (\overline{\tau}+z)^{b+1}} dz.
\end{align*}
Il s'agit de l'intégrale d'une fonction méromorphe ayant pour seuls pôles $z=-\tau$ et $z=-\overline{\tau}$.

\emph{Premier cas : $\tau \in \h$, $r \geq 1$.} On déforme le chemin d'intégration vers le bas, ce qui donne
\begin{align*}
\int_{-\infty}^{\infty} \frac{ e^{-2i\pi rz} }{(\tau+z)^{a+1} (\overline{\tau}+z)^{b+1}} dz & = - 2i\pi \Res_{z=-\tau} \left(\frac{e^{-2i\pi rz}}{(z+\tau)^{a+1} (z+\overline{\tau})^{b+1}}\right) \\
& = -2i\pi \frac{f^{(a)}(-\tau)}{a!}
\end{align*}
où $f$ est la fonction définie par $f(z)=(z+\overline{\tau})^{-b-1} e^{-2i\pi rz}$. Par la formule de Leibniz
\begin{align*}
f^{(a)}(z) & = \sum_{j=0}^a \begin{pmatrix} a \\ j \end{pmatrix} (-b-1) \cdots (-b-j) (z+\overline{\tau})^{-b-1-j} (-2i\pi r)^{a-j} e^{-2i\pi rz} \\
& = \sum_{j=0}^a (-1)^j \begin{pmatrix} a \\ j \end{pmatrix} \frac{(b+j)!}{b!} (z+\overline{\tau})^{-b-1-j} (-2i\pi r)^{a-j} e^{-2i\pi rz}
\end{align*}
d'où l'on déduit
\begin{align*}
c_r(\phi_{a,b,\tau}) & = -2i\pi  \sum_{j=0}^a (-1)^j \frac{(b+j)!}{b! j! (a-j)!} (-\tau+\overline{\tau})^{-b-1-j} (-2i\pi r)^{a-j} e^{2i\pi r\tau} \\
& = (-1)^{a+b} 2i\pi  \sum_{j=0}^a (-1)^j \frac{(b+j)!}{b! j! (a-j)!} (\tau-\overline{\tau})^{-b-1-j} (2i\pi r)^{a-j} e^{2i\pi r\tau}.
\end{align*}
En effectuant le changement d'indices $j \to a-j$, il vient
\begin{align*}
c_r(\phi_{a,b,\tau}) & = (-1)^{a+b} 2i\pi  \sum_{j=0}^a (-1)^{a-j} \frac{(b+a-j)!}{b! j! (a-j)!} (\tau-\overline{\tau})^{-b-1-a+j} (2i\pi r)^j e^{2i\pi r\tau}\\
& = \frac{(-1)^b}{b!} 2i\pi  \sum_{j=0}^a (-1)^j \frac{(a+b-j)!}{j! (a-j)!} (\tau-\overline{\tau})^{-a-b-1+j} (2i\pi r)^j e^{2i\pi r\tau}
\end{align*}

\emph{Deuxième cas : $\tau \in \h$, $r \leq -1$.} Par le changement de variables $z \to -z$, on a
\begin{align*}
\int_{-\infty}^{\infty} \frac{ e^{-2i\pi rz} }{(\tau+z)^{a+1} (\overline{\tau}+z)^{b+1}} dz & = \int_{\infty}^{-\infty} \frac{ e^{2i\pi rz} }{(\tau-z)^{a+1} (\overline{\tau}-z)^{b+1}} d(-z) \\
& = (-1)^{a+b} \int_{-\infty}^{\infty} \frac{ e^{2i\pi rz} }{(-\tau+z)^{a+1} (-\overline{\tau}+z)^{b+1}} dz\\
& = (-1)^{a+b} c_{-r}(\phi_{b,a,-\overline{\tau}})
\end{align*}
et donc
\begin{align*}
c_r(\phi_{a,b,\tau}) & = \frac{(-1)^b}{a!} 2i\pi  \sum_{j=0}^b (-1)^j \frac{(a+b-j)!}{j! (b-j)!} (\tau-\overline{\tau})^{-a-b-1+j} (-2i\pi r)^j e^{2i\pi r\overline{\tau}}\\
& = \frac{(-1)^b}{a!} 2i\pi  \sum_{j=0}^b \frac{(a+b-j)!}{j! (b-j)!} (\tau-\overline{\tau})^{-a-b-1+j} (2i\pi r)^j e^{2i\pi r\overline{\tau}}
\end{align*}

\emph{Troisième cas : $\tau \in \h$, $r =0$.} En déformant le chemin d'intégration vers le bas, on a encore
\begin{align*}
\int_{-\infty}^{\infty} \frac{ 1 }{(\tau+z)^{a+1} (\overline{\tau}+z)^{b+1}} dz & = - 2i\pi \Res_{z=-\tau} \left(\frac{1}{(z+\tau)^{a+1} (z+\overline{\tau})^{b+1}}\right) \\
& = -2i\pi \frac{f^{(a)}(-\tau)}{a!}
\end{align*}
où $f$ est la fonction définie par $f(z)=(z+\overline{\tau})^{-b-1}$. Par la formule de Leibniz
\begin{align*}
f^{(a)}(z) & = (-b-1) \cdots (-b-a) (z+\overline{\tau})^{-b-1-a} \\
& = (-1)^a \frac{(b+a)!}{b!} (z+\overline{\tau})^{-b-1-a}
\end{align*}
d'où l'on déduit
\begin{equation*}
c_0(\phi_{a,b,\tau}) = (-1)^b 2i\pi \begin{pmatrix} a+b \\ a \end{pmatrix} (\tau-\overline{\tau})^{-a-b-1}.
\end{equation*}

\emph{Quatrième cas : $\tau \in -\h$.} On a $c_r(\phi_{a,b,\tau}) = (-1)^{a+b} c_{-r}(\phi_{a,b,-\tau})$.

Exprimons maintenant $E^{a,b}_u$ en termes de $\phi_{a,b,\tau}$. On a

\begin{align*}
E^{a,b}_{(u_1,u_2)}(\tau) & = \delta_{u_1=0} \sideset{}{'}\sum_{n \equiv u_2(N)} \frac{1}{n^{a+b+2}} + \sum_{\substack{m \equiv u_1 (N)\\ m \neq 0}} \sum_{n \in \Z} \frac{1}{(m\tau+u_2+Nn)^{a+1} (m\overline{\tau}+u_2+Nn)^{b+1}}\\
& = \delta_{u_1=0} \left(\sum_{\substack{n \geq 1 \\ n \equiv u_2(N)}} \frac{1}{n^{a+b+2}} +  \sum_{\substack{n \leq -1 \\ n \equiv u_2(N)}} \frac{1}{n^{a+b+2}} \right) + N^{-a-b-2} \sum_{\substack{m \equiv u_1 (N)\\ m \neq 0}} \sum_{n \in \Z} \frac{1}{(\frac{m\tau+u_2}{N}+n)^{a+1} (\frac{m\overline{\tau}+u_2}{N}+n)^{b+1}}\\
& = \delta_{u_1=0} \left(\sum_{\substack{n \geq 1 \\ n \equiv u_2(N)}} \frac{1}{n^{a+b+2}} + (-1)^{a+b} \sum_{\substack{n \geq 1 \\ n \equiv -u_2(N)}} \frac{1}{n^{a+b+2}} \right) + N^{-a-b-2} \sum_{\substack{m \equiv u_1 (N) \\ m \neq 0}} \phi_{a,b,\frac{m\tau + u_2}{N}}(0) \\
& = \delta_{u_1=0} N^{-a-b-2} \left(\sum_{\substack{x >0 \\ x \equiv \frac{u_2}{N} (1)}} \frac{1}{x^{a+b+2}} + (-1)^{a+b} \sum_{\substack{x >0 \\ x \equiv -\frac{u_2}{N} (1)}} \frac{1}{x^{a+b+2}} \right) + N^{-a-b-2} \sum_{\substack{m \equiv u_1 (N) \\ m \neq 0}} \sum_{r \in \Z} c_r(\phi_{a,b,\frac{m\tau + u_2}{N}}) \\
& = \delta_{u_1=0} N^{-a-b-2} \left(\zeta\left(\frac{u_2}{N},a+b+2\right) + (-1)^{a+b} \zeta\left(-\frac{u_2}{N},a+b+2\right) \right) \\
& \qquad + N^{-a-b-2} \left( \sum_{\substack{m \geq 1 \\ m \equiv u_1 (N)}} \sum_{r \in \Z} c_r(\phi_{a,b,\frac{m\tau + u_2}{N}}) + (-1)^{a+b} \sum_{\substack{m \geq 1 \\ m \equiv -u_1 (N)}} \sum_{r \in \Z} c_r(\phi_{a,b,\frac{m\tau - u_2}{N}}) \right)
\end{align*}

On distingue ensuite suivant la valeur de $r$, ce qui donne

\begin{align*}
E^{a,b}_{(u_1,u_2)}(\tau) & = \delta_{u_1=0} N^{-a-b-2} \left(\zeta\left(\frac{u_2}{N},a+b+2\right) + (-1)^{a+b} \zeta\left(-\frac{u_2}{N},a+b+2\right) \right) \\
& \quad + N^{-a-b-2} \sum_{\substack{m \geq 1 \\ m \equiv u_1 (N)}} (-1)^b 2i\pi \begin{pmatrix} a+b \\ a \end{pmatrix} \left(\frac{m\tau-m\overline{\tau}}{N}\right)^{-a-b-1} \\
& \quad + (-1)^{a+b} N^{-a-b-2} \sum_{\substack{m \geq 1 \\ m \equiv -u_1 (N)}} (-1)^b 2i\pi \begin{pmatrix} a+b \\ a \end{pmatrix} \left(\frac{m\tau-m\overline{\tau}}{N}\right)^{-a-b-1} \\
& \quad + N^{-a-b-2} \left( \sum_{\substack{m \geq 1 \\ m \equiv u_1 (N)}} \sum_{r \geq 1} c_r(\phi_{a,b,\frac{m\tau + u_2}{N}}) + (-1)^{a+b} \sum_{\substack{m \geq 1 \\ m \equiv -u_1 (N)}} \sum_{r \geq 1} c_r(\phi_{a,b,\frac{m\tau - u_2}{N}}) \right)\\
& \quad + N^{-a-b-2} \left( \sum_{\substack{m \geq 1 \\ m \equiv u_1 (N)}} \sum_{r \leq -1} c_r(\phi_{a,b,\frac{m\tau + u_2}{N}}) + (-1)^{a+b} \sum_{\substack{m \geq 1 \\ m \equiv -u_1 (N)}} \sum_{r \leq -1} c_r(\phi_{a,b,\frac{m\tau - u_2}{N}}) \right).
\end{align*}

En utilisant l'expression des coefficients de Fourier de $\phi_{a,b,\tau}$, il vient

\begin{align*}
E^{a,b}_{(u_1,u_2)}(\tau) & = \delta_{u_1=0} N^{-a-b-2} \left(\zeta\left(\frac{u_2}{N},a+b+2\right) + (-1)^{a+b} \zeta\left(-\frac{u_2}{N},a+b+2\right) \right) \\
& \quad + (-1)^b \frac{2i\pi}{N^{a+b+2}} \begin{pmatrix} a+b \\ a \end{pmatrix} \left( \zeta\left(\frac{u_1}{N},a+b+1\right) + (-1)^{a+b} \zeta\left(-\frac{u_1}{N},a+b+1\right)\right) (\tau-\overline{\tau})^{-a-b-1} \\
& \quad + N^{-a-b-2} \sum_{\substack{m \geq 1 \\ m \equiv u_1 (N)}} \sum_{r \geq 1} \frac{(-1)^b}{b!} 2i\pi  \sum_{j=0}^a (-1)^j \frac{(a+b-j)!}{j! (a-j)!} \left(\frac{m(\tau-\overline{\tau})}{N}\right)^{-a-b-1+j} (2i\pi r)^j e^{2i\pi r \frac{m\tau+u_2}{N}} \\
& \quad + \frac{(-1)^{a+b}}{N^{a+b+2}} \sum_{\substack{m \geq 1 \\ m \equiv -u_1 (N)}} \sum_{r \geq 1} \frac{(-1)^b}{b!} 2i\pi  \sum_{j=0}^a (-1)^j \frac{(a+b-j)!}{j! (a-j)!} \left(\frac{m(\tau-\overline{\tau})}{N}\right)^{-a-b-1+j} (2i\pi r)^j e^{2i\pi r \frac{m\tau-u_2}{N}} \\
& \quad + N^{-a-b-2} \sum_{\substack{m \geq 1 \\ m \equiv u_1 (N)}} \sum_{r \leq -1} \frac{(-1)^b}{a!} 2i\pi  \sum_{j=0}^b \frac{(a+b-j)!}{j! (b-j)!} \left(\frac{m(\tau-\overline{\tau})}{N}\right)^{-a-b-1+j} (2i\pi r)^j e^{2i\pi r(\frac{m\overline{\tau}+u_2}{N})}\\
& \quad  + \frac{(-1)^{a+b}}{N^{a+b+2}} \sum_{\substack{m \geq 1 \\ m \equiv -u_1 (N)}} \sum_{r \leq -1} \frac{(-1)^b}{a!} 2i\pi  \sum_{j=0}^b \frac{(a+b-j)!}{j! (b-j)!} \left(\frac{m(\tau-\overline{\tau})}{N}\right)^{-a-b-1+j} (2i\pi r)^j e^{2i\pi r(\frac{m\overline{\tau}-u_2}{N})}.
 \end{align*}
 
On obtient
\begin{align*}
E^{a,b}_{(u_1,u_2)}(\tau) & = \delta_{u_1=0} N^{-a-b-2} \left(\zeta\left(\frac{u_2}{N},a+b+2\right) + (-1)^{a+b} \zeta\left(-\frac{u_2}{N},a+b+2\right) \right) \\
& \quad + (-1)^b \frac{2i\pi}{N^{a+b+2}} \begin{pmatrix} a+b \\ a \end{pmatrix} \left( \zeta\left(\frac{u_1}{N},a+b+1\right) + (-1)^{a+b} \zeta\left(-\frac{u_1}{N},a+b+1\right)\right) (\tau-\overline{\tau})^{-a-b-1} \\
& \quad +  \frac{(-1)^{b+1}}{b!} \sum_{j=0}^a \frac{(a+b-j)!}{j! (a-j)!}  \left(-\frac{2i\pi}{N}\right)^{j+1} (\tau-\overline{\tau})^{-a-b-1+j} \sum_{\substack{m \geq 1 \\ m \equiv u_1 (N)}} \sum_{r \geq 1}   m^{-a-b-1+j} r^j \zeta_N^{ru_2} q^{\frac{mr}{N}} \\
& \quad +  \frac{(-1)^{a+1}}{b!} \sum_{j=0}^a \frac{(a+b-j)!}{j! (a-j)!}  \left(-\frac{2i\pi}{N}\right)^{j+1} (\tau-\overline{\tau})^{-a-b-1+j} \sum_{\substack{m \geq 1 \\ m \equiv -u_1 (N)}} \sum_{r \geq 1}   m^{-a-b-1+j} r^j \zeta_N^{-ru_2} q^{\frac{mr}{N}} \\
& \quad +  \frac{(-1)^{b+1}}{a!} \sum_{j=0}^b \frac{(a+b-j)!}{j! (b-j)!} \left(-\frac{2i\pi}{N}\right)^{j+1} (\tau-\overline{\tau})^{-a-b-1+j}\sum_{\substack{m \geq 1 \\ m \equiv u_1 (N)}} \sum_{r \geq 1} m^{-a-b-1+j}  r^j \zeta_N^{-ru_2} \overline{q}^{\frac{mr}{N}}\\
& \quad +  \frac{(-1)^{a+1}}{a!} \sum_{j=0}^b \frac{(a+b-j)!}{j! (b-j)!} \left(-\frac{2i\pi}{N}\right)^{j+1} (\tau-\overline{\tau})^{-a-b-1+j}\sum_{\substack{m \geq 1 \\ m \equiv -u_1 (N)}} \sum_{r \geq 1} m^{-a-b-1+j}  r^j \zeta_N^{ru_2} \overline{q}^{\frac{mr}{N}}
 \end{align*}
d'où le résultat.
\end{proof}

\begin{pro}\label{fourier Fab}
Pour $a,b \geq 0$ et $u_1,u_2 \in \Z/N\Z$, on a
\begin{equation}
\begin{split}
F^{a,b}_{(u_1,u_2)}(\tau) & = \hat{\zeta}\left(\frac{u_2}{N},a+b+2\right) + (-1)^{a+b} \hat{\zeta}\left(-\frac{u_2}{N},a+b+2\right)\\
& \quad + (-1)^b 2i\pi \begin{pmatrix} a+b \\ a \end{pmatrix} \delta_0(u_2) (\tau-\overline{\tau})^{-a-b-1} \left(\hat{\zeta}\left(\frac{u_1}{N},a+b+1\right)+(-1)^{a+b} \hat{\zeta}\left(-\frac{u_1}{N},a+b+1\right)\right) \\
& \quad + \frac{(-1)^{b+1}}{b!} \sum_{j=0}^{a} \frac{(a+b-j)!}{j! (a-j)!} \left(-\frac{2i\pi}{N}\right)^{j+1} (\tau-\overline{\tau})^{-a-b-1+j} N \bigl(S^{j-a-b-1,j}_{\hat{\delta}_{-u_1},\delta_{-u_2}}(\tau) + (-1)^{a+b} S^{j-a-b-1,j}_{\hat{\delta}_{u_1},\delta_{u_2}}(\tau)\bigr)\\
& \quad + \frac{(-1)^{b+1}}{a!} \sum_{j=0}^{b} \frac{(a+b-j)!}{j! (b-j)!} \left(-\frac{2i\pi}{N}\right)^{j+1} (\tau-\overline{\tau})^{-a-b-1+j} N \bigl(\overline{S}^{j-a-b-1,j}_{\hat{\delta}_{u_1},\delta_{u_2}}(\tau) + (-1)^{a+b} \overline{S}^{j-a-b-1,j}_{\hat{\delta}_{-u_1},\delta_{-u_2}}(\tau)\bigr).
\end{split}
\end{equation}
\end{pro}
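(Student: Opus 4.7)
The plan is to start from the identity (\ref{Fab Eab}), which expresses $F^{a,b}_{(u_1,u_2)}$ as a discrete Fourier transform of the series $E^{a,b}_{(x,y)}$, and substitute the Fourier expansion provided by Proposition \ref{fourier Eab}. The conclusion then follows by exchanging the finite sums over $x,y \in \Z/N\Z$ with the sums internal to each term of that expansion, and simplifying using finite Fourier inversion on $\Z/N\Z$.

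The ingredients I would record first are the two identities already stated in Section \ref{sec notations},
\[ \sum_{x \in \Z/N\Z} \zeta_N^{xu} \zeta\left(\tfrac{x}{N}, s\right) = N^s \hat{\zeta}\left(\tfrac{u}{N}, s\right), \qquad \sum_{x \in \Z/N\Z} \zeta_N^{xu} = N \delta_0(u), \]
together with the elementary relations
\[ \sum_{x \in \Z/N\Z} \zeta_N^{xu} \delta_{\pm x}(m) = \zeta_N^{\pm um} = \hat{\delta}_{\mp u}(m), \qquad \sum_{y \in \Z/N\Z} \zeta_N^{yv} \hat{\delta}_{\pm y}(n) = N \delta_{\mp v}(n), \]
which follow immediately from the definitions of $\delta_u$ and $\hat{\delta}_u$.

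Then each of the four groups of terms in the expansion of $E^{a,b}_{(x,y)}$ transforms as follows. The first term, carrying the factor $\delta_0(x)$, collapses in the sum over $x$ to a single contribution, and the sum over $y$ against $\zeta(\pm y/N, a+b+2)$ produces $N^{a+b+2} \hat{\zeta}(\pm u_2/N, a+b+2)$; this cancels the normalisation $N^{-a-b-2}$ and gives the first line of the claim. The second term, of order $(\tau-\overline{\tau})^{-a-b-1}$, contributes via a sum over $y$ equal to $N \delta_0(u_2)$ and a sum over $x$ equal to $N^{a+b+1} \hat{\zeta}(\pm u_1/N, a+b+1)$, producing the second line. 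The third and fourth terms, containing the Lambert-type series $S$ and its conjugate $\overline{S}$, are handled by the last pair of identities: the sum over $x$ converts $\delta_{\pm x}$ into $\hat{\delta}_{\mp u_1}$, while the sum over $y$ converts $\hat{\delta}_{\mp y}$ into $N \delta_{\pm u_2}$, yielding the series $S^{j-a-b-1,j}_{\hat{\delta}_{\mp u_1}, \delta_{\mp u_2}}$ (and its conjugate) with the extra factor $N$ present in the claim.

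The argument has no serious obstacle beyond organisation: it is essentially a bookkeeping exercise in finite Fourier inversion. The only point requiring attention is the consistent tracking of signs, in particular the pairing between $\delta_x, \delta_{-x}$ on one side and $\hat{\delta}_{-y}, \hat{\delta}_y$ on the other, and the way they combine with the $(-1)^{a+b}$ factors to produce $\hat{\zeta}(\pm u_1/N, \cdot)$ and the two symmetric $S$-series. The statement of Proposition \ref{fourier Eab} is arranged precisely so that these signs line up with the target formula without any further manipulation.
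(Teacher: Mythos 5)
Your proposal is exactly the paper's proof (which is stated in one line: apply Proposition \ref{fourier Eab} to the identity (\ref{Fab Eab}) and carry out the finite Fourier sums over $x,y$), only written out in more detail, and the bookkeeping does produce the stated formula. The one slip is that your displayed identity $\sum_{y} \zeta_N^{yv} \hat{\delta}_{\pm y}(n) = N \delta_{\mp v}(n)$ should read $N \delta_{\pm v}(n)$ (since $\hat{\delta}_{-y}(n)=\zeta_N^{yn}$ gives $\sum_y \zeta_N^{y(v+n)}=N\delta_{-v}(n)$); with that correction the conversion pairs $\hat{\delta}_{-y}$ with $\delta_{-u_2}$, consistent with the series $S^{j-a-b-1,j}_{\hat{\delta}_{\mp u_1},\delta_{\mp u_2}}$ you (correctly) arrive at in the end.
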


\begin{proof}
On utilise la proposition \ref{fourier Eab} et l'identité (\ref{Fab Eab}).
\end{proof}

On déduit le développement de Fourier de $\Eis^n_{\mathcal{D}}(u)$ de la formule (\ref{formule EisnD Fgu}) et de la proposition \ref{fourier Fab}.

\section{Calcul du régulateur}\label{sec calcul reg}

Le calcul de l'intégrale de $\Eis^{k_1,k_2}_{\mathcal{D}}(u_1,u_2)$ le long du cycle de Shokurov $X^k \{0,\infty\}$ procède en trois étapes. Dans un premier temps, nous intégrons $\Eis^{k_1,k_2}_{\mathcal{D}}(u_1,u_2)$ sur les fibres $\gamma_{y,k}$ du cycle de Shokurov. Dans un deuxième temps, nous intégrons le résultat obtenu le long du symbole modulaire $\{0,\infty\}$, à l'aide de la méthode de Rogers-Zudilin. L'expression obtenue (\ref{expression ABCDEF}) comporte six termes : un terme principal (noté $A$) et cinq termes supplémentaires (notés $B$ à $F$) provenant des termes constants des séries d'Eisenstein. Dans un troisième temps, nous montrons que tous les termes supplémentaires se simplifient.

Soit $p : E^k(\CC) \to Y(N)(\CC)$ la projection canonique. On a une application naturelle $\nu : \h \to Y(N)(\CC)$ donnée par $\nu(\tau) = [(\tau,\sigma)]$, qui nous permet de voir le chemin $\{0,\infty\}$ comme une sous-variété de $Y(N)(\CC)$. Considérons la sous-variété $W = p^{-1}(\{0,\infty\})$ de $E^k(\CC)$. On a
\begin{equation*}
W = \bigcup_{y>0} p^{-1}(\nu(iy)) =  \bigcup_{y>0} \{iy\} \times \left(\frac{\CC}{\Z+iy\Z}\right)^k \times \{\sigma\}.
\end{equation*}

Soient $u_1=(a_1,b_1)$ et $u_2= (a_2,b_2) \in (\Z/N\Z)^2$. Nous allons calculer la restriction de $\Eis^{k_1,k_2}_{\mathcal{D}}(u_1,u_2)$ à $W$. D'après la proposition \ref{pro EisD}, on a
\begin{equation*}
\Eis^{k_1}_{\mathcal{D}}(u_1) = -\frac{k_1!(k_1+2)}{2\pi N} \Im(\tau) \sum_{a=0}^{k_1} F^{a,k_1-a}_{gu_1}(\tau) \psi_{a,k_1-a} \mod{\dd\tau,\dd\overline{\tau}}.
\end{equation*}
Comme $\Eis^{k_1}_{\mathcal{D}}(u_1)$ est invariante par $\sigma : (\tau,z,g) \mapsto (-\frac{1}{\tau},\frac{z}{\tau},\sigma g)$, il vient
\begin{equation*}
\Eis^{k_1}_{\mathcal{D}}(u_1) = \sigma^* \Eis^{k_1}_{\mathcal{D}}(u_1) = -\frac{k_1!(k_1+2)}{2\pi N} \Im(-\frac{1}{\tau}) \sum_{a=0}^{k_1} F^{a,k_1-a}_{\sigma gu_1}(-\frac{1}{\tau}) \tau^{-a} \overline{\tau}^{-k_1+a} \psi_{a,k_1-a} \mod{\dd\tau,\dd\overline{\tau}}.
\end{equation*}
En restreignant à $W$, il vient
\begin{align}\nonumber
p_1^* \Eis^{k_1}_{\mathcal{D}}(u_1) |_W & = -\frac{k_1!(k_1+2)}{2\pi N} y^{-1} \sum_{a=0}^{k_1} F^{a,k_1-a}_{-u_1}(\frac{i}{y}) (iy)^{-a} (-iy)^{-k_1+a} p_1^* \psi_{a,k_1-a} \mod{\dd y}\\
\label{p1 Eisk1}
 & =  -\frac{k_1!(k_1+2)}{2\pi N} y^{-1} (iy)^{-k_1} \sum_{a=0}^{k_1} F^{a,k_1-a}_{-u_1}(\frac{i}{y}) (-1)^{-k_1+a} p_1^* \psi_{a,k_1-a} \mod{\dd y}
\end{align}
En utilisant la proposition \ref{fourier Fab}, il vient
\begin{equation*}
\begin{split}
F^{a,k_1-a}_{-u_1}(\frac{i}{y}) & = \alpha_1 + (-1)^{k_1-a} \begin{pmatrix} k_1 \\ a \end{pmatrix} \beta_1 y^{k_1+1}\\
& \quad + \frac{(-1)^{k_1+1-a}}{(k_1-a)!} N \sum_{\ell=0}^a \frac{(k_1-\ell)!}{\ell!(a-\ell)!}(-\frac{2i\pi}{N})^{\ell+1} (\frac{2i}{y})^{-k_1-1+\ell} \left(S^{\ell-k_1-1,\ell}_{\hat{\delta}_{a_1},\delta_{b_1}}(\frac{i}{y})+(-1)^{k_1} S^{\ell-k_1-1,\ell}_{\hat{\delta}_{-a_1},\delta_{-b_1}}(\frac{i}{y})\right)\\
& \quad + \frac{(-1)^{k_1+1-a}}{a!} N \sum_{\ell=0}^{k_1-a} \frac{(k_1-\ell)!}{\ell!(k_1-a-\ell)!} (-\frac{2i\pi}{N})^{\ell+1} (\frac{2i}{y})^{-k_1-1+\ell} \left(\overline{S}^{\ell-k_1-1,\ell}_{\hat{\delta}_{-a_1},\delta_{-b_1}}(\frac{i}{y})+(-1)^{k_1} \overline{S}^{\ell-k_1-1,\ell}_{\hat{\delta}_{a_1},\delta_{b_1}}(\frac{i}{y})\right).
\end{split}
\end{equation*}
avec
\begin{align*}
\alpha_1 &  = \hat{\zeta}(-\frac{b_1}{N},k_1+2) + (-1)^{k_1} \hat{\zeta}(\frac{b_1}{N},k_1+2)\\
\beta_1 & = 2i\pi \delta_{b_1=0} (2i)^{-k_1-1} \left(\hat{\zeta}(-\frac{a_1}{N},k_1+1)+(-1)^{k_1} \hat{\zeta}(\frac{a_1}{N},k_1+1)\right).
\end{align*}
En reportant dans (\ref{p1 Eisk1}), il vient
\begin{equation}
p_1^* \Eis^{k_1}_{\mathcal{D}}(u_1) |_W = \eta_0+\eta_1
\end{equation}
avec
\begin{equation}\label{eq Ek1u1}
\begin{split}
\eta_0 & = - \frac{k_1!(k_1+2)}{2\pi N} i^{-k_1} \sum_{a=0}^{k_1} \left((-1)^{k_1-a}  \alpha_1 y^{-k_1-1} + \begin{pmatrix} k_1 \\ a \end{pmatrix} \beta_1 \right) p_1^* \psi_{a,k_1-a} \\
\eta_1 & = (-1)^{k_1+1} \frac{k_1!(k_1+2)}{2^{k_1+3} \pi} \sum_{\ell=0}^{k_1} \left( S^{\ell-k_1-1,\ell}_{\hat{\delta}_{a_1},\delta_{b_1}}(\frac{i}{y}) + (-1)^{k_1} S^{\ell-k_1-1,\ell}_{\hat{\delta}_{-a_1},\delta_{-b_1}}(\frac{i}{y})\right) \Omega_\ell \\
& \quad + (-1)^{k_1+1} \frac{k_1!(k_1+2)}{2^{k_1+3} \pi} \sum_{\ell=0}^{k_1} \left( \overline{S}^{\ell-k_1-1,\ell}_{\hat{\delta}_{-a_1},\delta_{-b_1}}(\frac{i}{y}) + (-1)^{k_1} \overline{S}^{\ell-k_1-1,\ell}_{\hat{\delta}_{a_1},\delta_{b_1}}(\frac{i}{y})\right) \overline{\Omega_\ell} \mod \dd y
\end{split}
\end{equation}
où l'on a posé, pour tout $0 \leq \ell \leq k_1$ :
\begin{equation*}
\Omega_\ell = \frac{(k_1-\ell)!}{\ell!} (\frac{4\pi}{N})^{\ell+1} y^{-\ell} \sum_{a=\ell}^{k_1} \frac{p_1^* \psi_{a,k_1-a}}{(k_1-a)!(a-\ell)!}.
\end{equation*}

D'autre part, en utilisant la proposition \ref{pro Enu}, on a
\begin{align*}
p_2^* \Eis^{k_2}_{\mathrm{hol}}(u_2) & = (-1)^{k_2+1} \frac{k_2+2}{N} (2i\pi)^{k_2} F^{(k_2+2)}_{\sigma gu_2}(\tau) \frac{\dd q}{q} \wedge p_2^* \psi_{k_2,0}\\
& = (-1)^{k_2+1} \frac{k_2+2}{N} (2i\pi)^{k_2+1} F^{(k_2+2)}_{\sigma gu_2}(\tau) \dd \tau \wedge p_2^* \psi_{k_2,0}
\end{align*}
d'où
\begin{equation*}
p_2^* \Eis^{k_2}_{\mathrm{hol}}(u_2) |_W  = i \frac{k_2+2}{N} (-2i\pi)^{k_2+1} F^{(k_2+2)}_{-u_2}(iy) \dd y \wedge p_2^* \psi_{k_2,0}
\end{equation*}
puis
\begin{equation*}
\pi_{k_2+1}(p_2^* \Eis^{k_2}_{\mathrm{hol}}(u_2)) |_W = i \frac{k_2+2}{2N} (-2i\pi)^{k_2+1} \dd y \wedge \left(F^{(k_2+2)}_{-u_2}(iy) p_2^* \psi_{k_2,0} - \overline{F}^{(k_2+2)}_{-u_2}(iy) p_2^* \psi_{0,k_2} \right).
\end{equation*}

\begin{lem}\label{lem int fibres 1}
Soit $0 \leq a \leq k_1$ un entier. On a
\begin{align}
\int_{\gamma_{y,k}} p_1^* \psi_{a,k_1-a} \wedge p_2^* \psi_{k_2,0} & = (-1)^{k_1-a} (iy)^k \\
\int_{\gamma_{y,k}} p_1^* \psi_{a,k_1-a} \wedge p_2^* \psi_{0,k_2} & = (-1)^{k-a} (iy)^k
\end{align}
\end{lem}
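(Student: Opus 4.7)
The plan is a direct computation: restrict the forms $p_i^*\psi_{\cdot,\cdot}$ to $\gamma_{y,k}$ using the explicit parametrization of the fiber, and then integrate over the unit cube $[0,1]^k$.

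First I parametrize $\gamma_{y,k}$ by $(t_1,\ldots,t_k) \in [0,1]^k$, which gives the coordinates $z_i = it_iy$ for $i=1,\ldots,k$. The key observation is the pullback of the holomorphic and antiholomorphic differentials along this parametrization:
\begin{equation*}
dz_i|_{\gamma_{y,k}} = iy\,dt_i, \qquad d\overline{z}_i|_{\gamma_{y,k}} = -iy\,dt_i.
\end{equation*}
In particular every $dz_i$ and every $d\overline{z}_i$ pulls back to a nonzero multiple of $dt_i$, so the symmetrization in the definition of $\psi_{a,b}$ collapses after restriction.

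The main step is then to evaluate the $\psi_{a,k_1-a}$ factor. For any permutation $\sigma \in \mathfrak{S}_{k_1}$, the restriction of the corresponding term is
\begin{equation*}
\varepsilon(\sigma)(-iy)^{k_1-a}(iy)^a\,dt_{\sigma(1)}\wedge\cdots\wedge dt_{\sigma(k_1)} = (-iy)^{k_1-a}(iy)^a\,dt_1\wedge\cdots\wedge dt_{k_1},
\end{equation*}
since the two signs $\varepsilon(\sigma)$ cancel. Summing over the $k_1!$ permutations and dividing by $k_1!$ gives
\begin{equation*}
p_1^*\psi_{a,k_1-a}\bigr|_{\gamma_{y,k}} = (-1)^{k_1-a}(iy)^{k_1}\,dt_1\wedge\cdots\wedge dt_{k_1}.
\end{equation*}
An identical argument (or the special case $a=k_2$, $b=0$, respectively $a=0$, $b=k_2$) yields
\begin{equation*}
p_2^*\psi_{k_2,0}\bigr|_{\gamma_{y,k}} = (iy)^{k_2}\,dt_{k_1+1}\wedge\cdots\wedge dt_k, \qquad p_2^*\psi_{0,k_2}\bigr|_{\gamma_{y,k}} = (-1)^{k_2}(iy)^{k_2}\,dt_{k_1+1}\wedge\cdots\wedge dt_k.
\end{equation*}

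It remains to take the wedge products, integrate over $[0,1]^k$ (which simply erases the $dt_i$'s), and collect the signs: the first integral gives $(-1)^{k_1-a}(iy)^k$ and the second gives $(-1)^{k_1-a}(-1)^{k_2}(iy)^k = (-1)^{k-a}(iy)^k$, as claimed. There is no genuine obstacle here; the only point requiring care is the sign bookkeeping, which works out because the natural orientation of $\gamma_{y,k}$ is exactly the product orientation induced by the $t_i$.
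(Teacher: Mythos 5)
Votre calcul est correct : la restriction $dz_i|_{\gamma_{y,k}} = iy\,dt_i$, $d\overline{z}_i|_{\gamma_{y,k}} = -iy\,dt_i$, l'annulation des deux signes $\varepsilon(\sigma)$ et l'intégration sur $[0,1]^k$ avec l'orientation produit donnent bien $(-1)^{k_1-a}(iy)^k$ et $(-1)^{k-a}(iy)^k$. L'article énonce ce lemme sans démonstration ; votre calcul direct est exactement celui qui est sous-entendu.
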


\begin{lem}\label{lem int fibres 2}
Pour tout entier $0 \leq \ell \leq k_1$, on a
\begin{align*}
\int_{\gamma_{y,k}} \Omega_\ell \wedge p_2^* \psi_{k_2,0} & = \begin{cases} 0 & \textrm{si } \ell <k_1\\
\frac{i^k}{k_1!} (\frac{4\pi}{N})^{k_1+1} y^{k_2} & \textrm{si } \ell=k_1.
\end{cases}\\
\int_{\gamma_{y,k}} \Omega_\ell \wedge p_2^* \psi_{0,k_2} & = \begin{cases} 0 & \textrm{si } \ell <k_1\\
(-1)^{k_2} \frac{i^k}{k_1!} (\frac{4\pi}{N})^{k_1+1} y^{k_2} & \textrm{si } \ell=k_1.
\end{cases}\\
\int_{\gamma_{y,k}} \overline{\Omega_\ell} \wedge p_2^* \psi_{k_2,0} & = \begin{cases} 0 & \textrm{si } \ell <k_1\\
(-1)^{k_1} \frac{i^k}{k_1!} (\frac{4\pi}{N})^{k_1+1} y^{k_2} & \textrm{si } \ell=k_1.
\end{cases} \\
\int_{\gamma_{y,k}} \overline{\Omega_\ell} \wedge p_2^* \psi_{0,k_2} & = \begin{cases} 0 & \textrm{si } \ell <k_1\\
(-1)^{k} \frac{i^k}{k_1!} (\frac{4\pi}{N})^{k_1+1} y^{k_2} & \textrm{si } \ell=k_1.
\end{cases}
\end{align*}
\end{lem}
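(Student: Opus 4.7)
The plan is to reduce Lemma \ref{lem int fibres 2} directly to Lemma \ref{lem int fibres 1} by expanding the definition of $\Omega_\ell$ and isolating a binomial sum that vanishes except in the top case $\ell = k_1$.

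First, I substitute
\begin{equation*}
\Omega_\ell = \frac{(k_1-\ell)!}{\ell!}\left(\frac{4\pi}{N}\right)^{\ell+1} y^{-\ell} \sum_{a=\ell}^{k_1} \frac{p_1^* \psi_{a,k_1-a}}{(k_1-a)!(a-\ell)!}
\end{equation*}
into each of the four integrals. Lemma \ref{lem int fibres 1} (and the obvious variant with $\psi_{0,k_2}$ in place of $\psi_{k_2,0}$, which carries an extra sign $(-1)^{k_2}$) reduces $\int_{\gamma_{y,k}} p_1^*\psi_{a,k_1-a} \wedge p_2^*\psi_{k_2,0}$ to $(-1)^{k_1-a}(iy)^k$. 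Collecting the constants yields
\begin{equation*}
\int_{\gamma_{y,k}} \Omega_\ell \wedge p_2^* \psi_{k_2,0} = \frac{(k_1-\ell)!}{\ell!}\left(\frac{4\pi}{N}\right)^{\ell+1} y^{-\ell}(iy)^k \sum_{a=\ell}^{k_1} \frac{(-1)^{k_1-a}}{(k_1-a)!(a-\ell)!}.
\end{equation*}

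The key step is the combinatorial identity, obtained via the change of index $j = a-\ell$:
\begin{equation*}
\sum_{a=\ell}^{k_1} \frac{(-1)^{k_1-a}}{(k_1-a)!(a-\ell)!} = \frac{1}{(k_1-\ell)!}\sum_{j=0}^{k_1-\ell} \binom{k_1-\ell}{j}(-1)^{k_1-\ell-j} = \frac{(1-1)^{k_1-\ell}}{(k_1-\ell)!},
\end{equation*}
which vanishes for $\ell < k_1$ and equals $1$ for $\ell = k_1$. Plugging $\ell = k_1$ and using $(iy)^k = i^k y^{k_1+k_2}$, the prefactor collapses to $\frac{i^k}{k_1!}(\frac{4\pi}{N})^{k_1+1} y^{k_2}$, giving the first formula of the lemma.

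The three remaining identities follow by tracking two independent sign changes. Replacing $\psi_{k_2,0}$ by $\psi_{0,k_2}$ multiplies the answer by $(-1)^{k_2}$, because on the fiber each $dz_j = iy\,dt_j$ is replaced by $d\overline{z}_j = -iy\,dt_j$. Swapping $\Omega_\ell$ for $\overline{\Omega_\ell}$ requires computing $\overline{\psi_{a,k_1-a}}$ on the fiber: since $y>0$ is real, $\overline{(iy)^{k_1}} = (-1)^{k_1}(iy)^{k_1}$, so the sign $(-1)^{k_1-a}$ in Lemma \ref{lem int fibres 1} becomes $(-1)^a$. The analogous sum $\sum_{a=\ell}^{k_1} \frac{(-1)^a}{(k_1-a)!(a-\ell)!}$ then evaluates to $\frac{(-1)^{k_1}(1-1)^{k_1-\ell}}{(k_1-\ell)!}$, explaining the factor $(-1)^{k_1}$ in the third formula and $(-1)^{k_1+k_2} = (-1)^k$ in the fourth. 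No real obstacle is expected; the proof is essentially a routine combination of Lemma \ref{lem int fibres 1} with the binomial collapse and careful sign bookkeeping under complex conjugation on the fiber.
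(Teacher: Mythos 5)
Votre démonstration est correcte et suit essentiellement la même voie que celle de l'article : substitution de la définition de $\Omega_\ell$, application du lemme \ref{lem int fibres 1}, réindexation $j=a-\ell$ et effondrement binomial $(1-1)^{k_1-\ell}$. Les trois autres cas, que l'article expédie par « se traitent de manière similaire », sont explicités chez vous avec un décompte des signes ($(-1)^{k_2}$ pour $\psi_{0,k_2}$, $(-1)^{k_1}$ pour la conjugaison) qui est exact.
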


\begin{proof}
On applique le lemme \ref{lem int fibres 1} avec $m=k=k_1+k_2$, ce qui donne
\begin{align*}
\int_{\gamma_{y,k}} \Omega_\ell \wedge p_2^* \psi_{k_2,0} & = \frac{(k_1-\ell)!}{\ell!} (\frac{4\pi}{N})^{\ell+1} y^{-\ell} \sum_{a=\ell}^{k_1} \frac{1}{(k_1-a)!(a-\ell)!} \int_{\gamma_{y,k}} p_1^* \psi_{a,k_1-a} \wedge p_2^* \psi_{k_2,0}\\
& = \frac{(k_1-\ell)!}{\ell!} (\frac{4\pi}{N})^{\ell+1} y^{-\ell} \sum_{a=\ell}^{k_1} \frac{(-1)^{k_1-a} (iy)^k}{(k_1-a)!(a-\ell)!}\\
& = \frac{(k_1-\ell)!}{\ell!} (\frac{4\pi}{N})^{\ell+1} y^{-\ell} (iy)^k \sum_{a=0}^{k_1-\ell} \frac{(-1)^{k_1-\ell-a}}{(k_1-\ell-a)! a!}\\
& = \begin{cases} 0 & \textrm{si } \ell <k_1\\
\frac{i^k}{k_1!} (\frac{4\pi}{N})^{k_1+1} y^{k_2} & \textrm{si } \ell=k_1.
\end{cases}
\end{align*}
Les autres cas se traitent de manière similaire.
\end{proof}

On intègre $\eta_0 \wedge \pi_{k_2+1}(p_2^* \Eis^{k_2}_{\mathrm{hol}}(u_2))$ sur les fibres de $X^k \{0,\infty\} \to \{0,\infty\}$ grâce au lemme \ref{lem int fibres 1}, ce qui donne
\begin{align}\label{int eta0}
& \int_{\gamma_{y,k}} \eta_0 \wedge \pi_{k_2+1}(p_2^* \Eis^{k_2}_{\mathrm{hol}}(u_2))\\
\nonumber & = - (-1)^{k_1} \frac{k_1!(k_1+2)}{2\pi N} i^{-k_1} \cdot i \frac{k_2+2}{2N} (-2i\pi)^{k_2+1} \dd y \sum_{a=0}^{k_1} \left((-1)^{k_1-a}  \alpha_1 y^{-k_1-1} + \begin{pmatrix} k_1 \\ a \end{pmatrix} \beta_1 \right) \cdot \\
\nonumber & \qquad \cdot \int_{\gamma_{y,k}} p_1^* \psi_{a,k_1-a} \wedge \left(F^{(k_2+2)}_{-u_2}(iy) p_2^* \psi_{k_2,0} - \overline{F}^{(k_2+2)}_{-u_2}(iy) p_2^* \psi_{0,k_2} \right)\\
\nonumber & = - (-1)^{k_1} \frac{k_1!(k_1+2)}{2\pi N} i^{-k_1} \cdot i \frac{k_2+2}{2N} (-2i\pi)^{k_2+1} \dd y \sum_{a=0}^{k_1} \left((-1)^{k_1-a}  \alpha_1 y^{-k_1-1} + \begin{pmatrix} k_1 \\ a \end{pmatrix} \beta_1 \right) \cdot \\
\nonumber & \qquad \cdot \left(F^{(k_2+2)}_{-u_2}(iy) (-1)^{k_1-a} (iy)^k - \overline{F}^{(k_2+2)}_{-u_2}(iy) (-1)^{k-a} (iy)^k \right) \\
\nonumber & = - (-1)^{k_1} \frac{k_1!(k_1+2)}{2\pi N} i^{-k_1} \cdot i \frac{k_2+2}{2N} (-2i\pi)^{k_2+1}  \sum_{a=0}^{k_1} \left((-1)^{k_1-a}  \alpha_1 y^{-k_1-1} + \begin{pmatrix} k_1 \\ a \end{pmatrix} \beta_1 \right) \cdot \\
\nonumber & \qquad \cdot (-1)^{k_1-a} (iy)^k \left(F^{(k_2+2)}_{-u_2}(iy) +(-1)^{k_2+1} \overline{F}^{(k_2+2)}_{-u_2}(iy) \right) \dd y
\end{align}
\begin{align}
\nonumber & = - (-1)^{k_1} \frac{k_1!(k_1+2)}{2\pi N} i^{k_2+1} \frac{k_2+2}{2N} (-2i\pi)^{k_2+1}  \sum_{a=0}^{k_1} \left( \alpha_1 y^{k_2-1} + (-1)^{k_1-a} \begin{pmatrix} k_1 \\ a \end{pmatrix} \beta_1 y^k \right) \cdot \\
\nonumber & \qquad \cdot \left(F^{(k_2+2)}_{-u_2}(iy) +(-1)^{k_2+1} \overline{F}^{(k_2+2)}_{-u_2}(iy) \right) \dd y\\
\nonumber & = - (-1)^{k_1} \frac{k_1!(k_1+2)(k_2+2)}{2 N^2}  (2\pi)^{k_2} \left( (k_1+1) \alpha_1 y^{k_2-1} + \delta_{k_1=0} \beta_1 y^k \right) \cdot \\
\nonumber & \qquad \cdot \left(F^{(k_2+2)}_{-u_2}(iy) +(-1)^{k_2+1} \overline{F}^{(k_2+2)}_{-u_2}(iy) \right) \dd y.
\end{align}

On intègre $\eta_1 \wedge \pi_{k_2+1}(p_2^* \Eis^{k_2}_{\mathrm{hol}}(u_2))$ sur les fibres de $X^k \{0,\infty\} \to \{0,\infty\}$ grâce au lemme \ref{lem int fibres 2}, ce qui donne
\begin{align}
\label{int eta1} & \int_{\gamma_{y,k}} \eta_1 \wedge \pi_{k_2+1}(p_2^* \Eis^{k_2}_{\mathrm{hol}}(u_2))\\
\nonumber & = - i^{k_1} \frac{(k_1+2)(k_2+2)}{4N^{k_1+2}} (2\pi)^{k_1+k_2+1} \cdot \\
\nonumber & \qquad \cdot \left(S^{-1,k_1}_{\hat{\delta}_{a_1},\delta_{b_1}}(\frac{i}{y}) + (-1)^{k_1} S^{-1,k_1}_{\hat{\delta}_{-a_1},\delta_{-b_1}}(\frac{i}{y})+ (-1)^{k_1} \overline{S}^{-1,k_1}_{\hat{\delta}_{-a_1},\delta_{-b_1}}(\frac{i}{y}) + \overline{S}^{-1,k_1}_{\hat{\delta}_{a_1},\delta_{b_1}}(\frac{i}{y})\right) \cdot \\
\nonumber & \qquad \cdot \left(F^{(k_2+2)}_{-u_2}(iy) + (-1)^{k_2+1} \overline{F}^{(k_2+2)}_{-u_2}(iy)\right)  y^{k_2} \dd y.
\end{align}
Puisque $\overline{S}^{k,\ell}_{\alpha,\beta}(\frac{i}{y}) = S^{k,\ell}_{\overline{\alpha},\overline{\beta}}(\frac{i}{y})$ et comme $\overline{\delta_a}=\delta_a$ et $\overline{\hat{\delta}_a}=\hat{\delta}_{-a}$, il vient
\begin{align*}
& S^{-1,k_1}_{\hat{\delta}_{a_1},\delta_{b_1}}(\frac{i}{y}) + (-1)^{k_1} S^{-1,k_1}_{\hat{\delta}_{-a_1},\delta_{-b_1}}(\frac{i}{y})+ (-1)^{k_1} \overline{S}^{-1,k_1}_{\hat{\delta}_{-a_1},\delta_{-b_1}}(\frac{i}{y}) + \overline{S}^{-1,k_1}_{\hat{\delta}_{a_1},\delta_{b_1}}(\frac{i}{y})\\
& = S^{-1,k_1}_{\hat{\delta}_{a_1},\delta_{b_1}}(\frac{i}{y}) + (-1)^{k_1} S^{-1,k_1}_{\hat{\delta}_{-a_1},\delta_{-b_1}}(\frac{i}{y})+ (-1)^{k_1} S^{-1,k_1}_{\hat{\delta}_{a_1},\delta_{-b_1}}(\frac{i}{y}) + S^{-1,k_1}_{\hat{\delta}_{-a_1},\delta_{b_1}}(\frac{i}{y})\\
& = S^{-1,k_1}_{\hat{\delta}_{a_1}+\hat{\delta}_{-a_1}, \delta_{b_1} + (-1)^{k_1} \delta_{-b_1}}(\frac{i}{y}).
\end{align*}
En mettant ensemble (\ref{int eta0}) et (\ref{int eta1}), il vient
\begin{align*}
& \int_{\gamma_{y,k}} p_1^* \Eis^{k_1}_{\mathcal{D}}(u_1) \wedge  \pi_{k_2+1}(p_2^* \Eis^{k_2}_{\mathrm{hol}}(u_2))\\ 
& = (-1)^{k_1+1} \frac{k_1!(k_1+2)(k_2+2)}{2 N^2}  (2\pi)^{k_2} \left( (k_1+1) \alpha_1 y^{k_2-1} + \delta_{k_1=0} \beta_1 y^{k_2} \right) \left(F^{(k_2+2)}_{-u_2}(iy) +(-1)^{k_2+1} \overline{F}^{(k_2+2)}_{-u_2}(iy) \right) \dd y\\
& \quad - i^{k_1} \frac{(k_1+2)(k_2+2)}{4N^{k_1+2}} (2\pi)^{k_1+k_2+1} S^{-1,k_1}_{\hat{\delta}_{a_1}+\hat{\delta}_{-a_1}, \delta_{b_1} + (-1)^{k_1} \delta_{-b_1}}(\frac{i}{y}) \left(F^{(k_2+2)}_{-u_2}(iy) + (-1)^{k_2+1} \overline{F}^{(k_2+2)}_{-u_2}(iy)\right) y^{k_2} \dd y
\end{align*}

D'autre part, d'après le lemme \ref{lem Fk}, le terme constant de la série d'Eisenstein $F^{(k_2+2)}_{-u_2}$ est réel, et on a
\begin{align*}
& F^{(k_2+2)}_{-u_2}(iy) + (-1)^{k_2+1} \overline{F}^{(k_2+2)}_{-u_2}(iy) \\
& = \zeta(-\frac{a_2}{N},-k_2-1) + (-1)^{k_2+1} \zeta(-\frac{a_2}{N},-k_2-1)\\
& \qquad + N^{-k_2-1} \left(\sum_{\substack{m,n \geq 1\\ n \equiv -a_2(N)}} \zeta_N^{-b_2 m} n^{k_2+1} e^{-\frac{2\pi mny}{N}} + (-1)^{k_2} \sum_{\substack{m,n \geq 1\\ n \equiv a_2(N)}} \zeta_N^{b_2 m} n^{k_2+1} e^{-\frac{2\pi mny}{N}}\right)\\
& \qquad + (-1)^{k_2+1} N^{-k_2-1} \left(\sum_{\substack{m,n \geq 1\\ n \equiv -a_2(N)}} \zeta_N^{b_2 m} n^{k_2+1} e^{-\frac{2\pi mny}{N}} + (-1)^{k_2} \sum_{\substack{m,n \geq 1\\ n \equiv a_2(N)}} \zeta_N^{-b_2 m} n^{k_2+1} e^{-\frac{2\pi mny}{N}}\right)\\
& = (1+(-1)^{k_2+1}) \zeta(-\frac{a_2}{N},-k_2-1) + N^{-k_2-1} S^{0,k_2+1}_{\hat{\delta}_{b_2}+(-1)^{k_2+1} \hat{\delta}_{-b_2}, \delta_{-a_2}-\delta_{a_2}}(iy).
\end{align*}

\begin{lem}\label{lem mellin F}
Pour $s \in \CC$, $\Re(s)<0$, la fonction $y \mapsto \left(F^{(k_2+2)}_{-u_2}(iy)+(-1)^{k_2+1}\overline{F}^{(k_2+2)}_{-u_2}(iy)\right) y^{s-1}$ est intégrable sur $]0,+\infty[$, et on a
\begin{equation*}
\begin{split}
& \int_0^\infty \left(F^{(k_2+2)}_{-u_2}(iy)+(-1)^{k_2+1}\overline{F}^{(k_2+2)}_{-u_2}(iy)\right) y^s \frac{\dd y}{y} = i^{k_2+2} (2\pi)^{s-k_2-2} \Gamma(-s+k_2+2) \cdot \\
& \qquad \cdot \left(\hat{\zeta}(\frac{a_2}{N},-s+k_2+2)-\hat{\zeta}(-\frac{a_2}{N},-s+k_2+2)\right) \left(\zeta(-\frac{b_2}{N},-s+1)+(-1)^{k_2+1} \zeta(\frac{b_2}{N},-s+1)\right).
\end{split}
\end{equation*}
\end{lem}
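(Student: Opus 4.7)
L'idée est d'utiliser la transformation modulaire sous $\sigma$ pour se ramener à une intégrale couverte par le lemme \ref{lem mellin Skl}. Je commencerais par appliquer le lemme \ref{lem Fk SL2} à la matrice $\sigma$ : il donne $F^{(k_2+2)}_{-u_2}(-1/\tau) = \tau^{k_2+2} F^{(k_2+2)}_{-b_2,a_2}(\tau)$, et en spécialisant $\tau = i/y$ on obtient
\begin{equation*}
F^{(k_2+2)}_{-u_2}(iy) = i^{k_2+2} y^{-k_2-2} F^{(k_2+2)}_{-b_2,a_2}(i/y).
\end{equation*}
En prenant le conjugué complexe et en utilisant $\overline{i^{k_2+2}} = (-1)^{k_2} i^{k_2+2}$, la combinaison voulue se réécrit
\begin{equation*}
F^{(k_2+2)}_{-u_2}(iy) + (-1)^{k_2+1} \overline{F}^{(k_2+2)}_{-u_2}(iy) = i^{k_2+2} y^{-k_2-2} \left(F^{(k_2+2)}_{-b_2,a_2}(i/y) - \overline{F}^{(k_2+2)}_{-b_2,a_2}(i/y)\right),
\end{equation*}
le signe $-1$ devant $\overline{F}$ provenant de $(-1)^{k_2+1}(-1)^{k_2} = -1$.

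Dans une deuxième étape, je substituerais le développement de Fourier fourni par le lemme \ref{lem Fk}. Comme le terme constant de $F^{(k_2+2)}_{-b_2,a_2}$ est réel (valeur d'un polynôme de Bernoulli), il s'élimine dans la différence $F - \overline{F}$. En regroupant les quatre séries obtenues grâce aux identités $\overline{\hat{\delta}_u} = \hat{\delta}_{-u}$ et $\overline{\delta_u} = \delta_u$, on aboutirait à
\begin{equation*}
F^{(k_2+2)}_{-b_2,a_2}(i/y) - \overline{F}^{(k_2+2)}_{-b_2,a_2}(i/y) = N^{-k_2-1}\, S^{0,k_2+1}_{\hat{\delta}_{-a_2} - \hat{\delta}_{a_2},\; \delta_{-b_2} + (-1)^{k_2+1} \delta_{b_2}}(i/y).
\end{equation*}
La décroissance exponentielle de $S(i/y)$ lorsque $y \to 0$ (lemme \ref{lem Skl}), combinée à la croissance polynomiale lorsque $y \to \infty$, assure alors l'intégrabilité contre $y^{s-1}\,\dd y$ au voisinage de $0$ pour tout $s$ et au voisinage de $+\infty$ pour $\Re(s) < 0$.

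Il ne resterait qu'à appliquer directement le lemme \ref{lem mellin Skl} avec $t = 0$, $u = k_2 + 1$ et la variable de Mellin remplacée par $s - k_2 - 2$, ce décalage absorbant le facteur $y^{-k_2-2}$, puis à identifier les fonctions $L$ associées via $L(\hat{\delta}_{\pm a_2}, v) = \hat{\zeta}(\mp a_2/N, v)$ et $L(\delta_{\pm b_2}, v) = N^{-v}\zeta(\pm b_2/N, v)$ d'après les relations de la section \ref{sec notations}. Les puissances de $N$ se compensent exactement pour livrer la formule annoncée. Le seul écueil est le suivi rigoureux des signes, en particulier la vérification que l'interaction entre $\overline{i^{k_2+2}}$, $(-1)^{k_2+1}$ et $(-1)^{k_2+2}$ produit bien une \emph{différence} $F - \overline{F}$ (permettant l'annulation du terme constant) tout en conservant le préfacteur $i^{k_2+2}$ en accord avec l'énoncé ; une fois ces signes stabilisés, le reste se réduit à une invocation mécanique des lemmes déjà établis.
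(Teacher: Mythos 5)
Your proposal is correct and follows essentially the same route as the paper: both rest on Lemme \ref{lem Fk SL2} applied to $\sigma$, the cancellation of the (real) constant terms so that the remaining combination becomes the series $N^{-k_2-1} S^{0,k_2+1}_{\hat{\delta}_{-a_2}-\hat{\delta}_{a_2},\,\delta_{-b_2}+(-1)^{k_2+1}\delta_{b_2}}$ via Lemme \ref{lem Fk}, and then Lemme \ref{lem mellin Skl}. The only (immaterial) difference is that the paper performs the substitution $y \mapsto 1/y$ inside the integral and works with $F^{(k_2+2)}_{-b_2,a_2}(iy)+(-1)^{k_2+1}F^{(k_2+2)}_{b_2,a_2}(iy)$, whereas you rewrite the integrand in place and use the $S(i/y)$ branch of the Mellin lemma; your sign bookkeeping checks out.
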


\begin{proof}
L'hypothèse $\Re(s)<0$ entraîne que l'intégrale converge lorsque $y \to +\infty$. On a $\overline{F}^{(k_2+2)}_{-a_2,-b_2}(iy) = F^{(k_2+2)}_{-a_2,b_2}(iy)$. D'après le lemme \ref{lem Fk SL2} appliqué avec $g=\sigma$, on a
\begin{equation*}
F^{(k_2+2)}_{-a_2,-b_2}(\frac{i}{y}) = (iy)^{k_2+2} F^{(k_2+2)}_{-b_2,a_2}(iy).
\end{equation*}
On fait le changement de variables $y \mapsto 1/y$ dans l'intégrale, ce qui donne
\begin{align*}
& \int_0^\infty \left(F^{(k_2+2)}_{-u_2}(iy)+(-1)^{k_2+1}\overline{F}^{(k_2+2)}_{-u_2}(iy)\right) y^s \frac{\dd y}{y}\\
& =  \int_0^\infty \left(F^{(k_2+2)}_{-a_2,-b_2}(\frac{i}{y})+(-1)^{k_2+1} F^{(k_2+2)}_{-a_2,b_2}(\frac{i}{y})\right) y^{-s} \frac{\dd y}{y}\\
& = \int_0^\infty \left((iy)^{k_2+2} F^{(k_2+2)}_{-b_2,a_2}(iy)+(-1)^{k_2+1} (iy)^{k_2+2} F^{(k_2+2)}_{b_2,a_2}(iy)\right) y^{-s} \frac{\dd y}{y}\\
& = i^{k_2+2} \int_0^\infty \left( F^{(k_2+2)}_{-b_2,a_2}(iy)+(-1)^{k_2+1} F^{(k_2+2)}_{b_2,a_2}(iy)\right) y^{-s+k_2+2} \frac{\dd y}{y}\\
\end{align*}
Le terme constant de la série d'Eisenstein $F^{(k_2+2)}_{-b_2,a_2} +(-1)^{k_2+1} F^{(k_2+2)}_{b_2,a_2}$ est nul, de sorte que l'intégrale de départ est convergente pour $\Re(s)<0$. D'après le lemme \ref{lem Fk}, on a
\begin{align*}
& F^{(k_2+2)}_{-b_2,a_2}(iy)+(-1)^{k_2+1} F^{(k_2+2)}_{b_2,a_2}(iy)\\
& = N^{-k_2-1} \left(S^{0,k_2+1}_{\hat{\delta}_{-a_2},\delta_{-b_2}}(iy) + (-1)^{k_2} S^{0,k_2+1}_{\hat{\delta}_{a_2},\delta_{b_2}}(iy) + (-1)^{k_2+1} S^{0,k_2+1}_{\hat{\delta}_{-a_2},\delta_{b_2}}(iy) - S^{0,k_2+1}_{\hat{\delta}_{a_2},\delta_{-b_2}}(iy) \right)\\
& = N^{-k_2-1} S^{0,k_2+1}_{\hat{\delta}_{-a_2}-\hat{\delta}_{a_2}, \delta_{-b_2}+(-1)^{k_2+1} \delta_{b_2}}(iy).
\end{align*}
Grâce au lemme \ref{lem mellin Skl}, on en déduit
\begin{align*}
& i^{k_2+2} \int_0^\infty \left( F^{(k_2+2)}_{-b_2,a_2}(iy)+(-1)^{k_2+1} F^{(k_2+2)}_{b_2,a_2}(iy)\right) y^{-s+k_2+2} \frac{\dd y}{y}\\
& = \frac{i^{k_2+2}}{N^{k_2+1}} \int_0^\infty S^{0,k_2+1}_{\hat{\delta}_{-a_2}-\hat{\delta}_{a_2}, \delta_{-b_2}+(-1)^{k_2+1} \delta_{b_2}}(iy) y^{-s+k_2+2} \frac{\dd y}{y}\\
& = \frac{i^{k_2+2}}{N^{k_2+1}} (\frac{2\pi}{N})^{s-k_2-2} \Gamma(s-k_2+2) L(\hat{\delta}_{-a_2}-\hat{\delta}_{a_2},-s+k_2+2) L(\delta_{-b_2}+(-1)^{k_2+1} \delta_{b_2},-s+1)\\
& = i^{k_2+2} (2\pi)^{s-k_2-2} \Gamma(-s+k_2+2) \left(\hat{\zeta}(\frac{a_2}{N},-s+k_2+2)-\hat{\zeta}(-\frac{a_2}{N},-s+k_2+2)\right) \cdot \\
& \qquad \qquad \cdot \left(\zeta(-\frac{b_2}{N},-s+1)+(-1)^{k_2+1} \zeta(\frac{b_2}{N},-s+1)\right).
\end{align*}
\end{proof}

Pour $\Re(s) \ll 0$, on en déduit
\begin{align*}
& \int_{X^k \{0,\infty\}} y^s p_1^* \Eis^{k_1}_{\mathcal{D}}(u_1) \wedge \pi_{k_2+1}(p_2^* \Eis^{k_2}_{\mathrm{hol}}(u_2))\\
& = (-1)^{k_1+1} k_1! \frac{(k_1+2)(k_2+2)}{2N^2} (2\pi)^{k_2} \int_0^\infty \left(F^{(k_2+2)}_{-u_2}(iy)+(-1)^{k_2+1} \overline{F}^{(k_2+2)}_{-u_2}(iy)\right) \cdot \\
& \qquad \qquad \cdot \left((k_1+1)\alpha_1 y^{k_2-1} + \delta_{k_1=0} \beta_1 y^{k_2} \right) y^s \dd y\\
& \quad - i^{k_1} \frac{(k_1+2)(k_2+2)}{4N^{k_1+2}} (2\pi)^{k_1+k_2+1} \int_0^\infty S^{-1,k_1}_{\hat{\delta}_{a_1}+\hat{\delta}_{-a_1}, \delta_{b_1} + (-1)^{k_1} \delta_{-b_1}}(\frac{i}{y}) \cdot \\
& \qquad \qquad \cdot \left((1+(-1)^{k_2+1}) \zeta(-\frac{a_2}{N},-k_2-1) + N^{-k_2-1} S^{0,k_2+1}_{\hat{\delta}_{b_2}+(-1)^{k_2+1} \hat{\delta}_{-b_2}, \delta_{-a_2}-\delta_{a_2}}(iy)\right) y^{s+k_2} \dd y.
\end{align*}

En appliquant les lemmes \ref{lem mellin Skl}, \ref{lem mellin F} et la proposition \ref{pro RZ trick}, on obtient
\begin{align}
\nonumber & \int_{X^k \{0,\infty\}} y^s p_1^* \Eis^{k_1}_{\mathcal{D}}(u_1) \wedge \pi_{k_2+1}(p_2^* \Eis^{k_2}_{\mathrm{hol}}(u_2))\\
\nonumber & = (-1)^{k_1} \frac{(k_1+2)!(k_2+2)}{2N^2} (2i\pi)^{k_2} \alpha_1 (2\pi)^{s-2} \Gamma(-s+2) \cdot\\
\nonumber & \qquad \cdot \left(\hat{\zeta}(\frac{a_2}{N},-s+2)-\hat{\zeta}(-\frac{a_2}{N},-s+2)\right) \left(\zeta(-\frac{b_2}{N},-s-k_2+1)+(-1)^{k_2+1} \zeta(\frac{b_2}{N},-s-k_2+1)\right)\\
\nonumber & \quad + \delta_{k_1=0} \beta_1 \frac{k_2+2}{N^2} (2i\pi)^{k_2} (2\pi)^{s-1} \Gamma(-s+1) \cdot\\
\nonumber & \qquad \cdot \left(\hat{\zeta}(\frac{a_2}{N},-s+1)-\hat{\zeta}(-\frac{a_2}{N},-s+1)\right) \left(\zeta(-\frac{b_2}{N},-s-k_2)+(-1)^{k_2+1} \zeta(\frac{b_2}{N},-s-k_2)\right)\\
\nonumber & \quad - i^{k_1} \frac{(k_1+2)(k_2+2)}{4N^{k_1+2}} (2\pi)^{k_1+k_2+1} (1+(-1)^{k_2+1}) \zeta(-\frac{a_2}{N},-k_2-1) (\frac{2\pi}{N})^{s+k_2+1} \Gamma(-s-k_2-1) \cdot \\
\nonumber & \qquad \cdot \left(\hat{\zeta}(\frac{a_1}{N},-s-k_2)+\hat{\zeta}(-\frac{a_1}{N},-s-k_2)\right) N^{s+k+1} \left(\zeta(\frac{b_1}{N},-s-k-1) + (-1)^{k_1} \zeta(-\frac{b_1}{N},-s-k-1)\right)\\
\nonumber & \quad - i^{k_1} \frac{(k_1+2)(k_2+2)}{4N^{k_1+2}} (2\pi)^{k_1+k_2+1} N^{-k_2-1} \cdot \\
\label{integrale Sk1k2} & \qquad \cdot \int_0^\infty S^{s+k_2,0}_{\hat{\delta}_{a_1}+\hat{\delta}_{-a_1}, \hat{\delta}_{b_2}+(-1)^{k_2+1} \hat{\delta}_{-b_2}}(iy) S^{k_1,-s}_{\delta_{b_1} + (-1)^{k_1} \delta_{-b_1}, \delta_{-a_2}-\delta_{a_2}}(\frac{i}{y}) y^{s+k_2} \dd y.
\end{align}

L'expression précédente est une fonction méromorphe de $s$, et tous les termes qui la composent sont holomorphes au voisinage de $s=0$, excepté les termes suivants :
\begin{enumerate}
\item L'expression $\zeta(\pm \frac{b_2}{N},-s-k_2+1)$ a un pôle simple lorsque $k_2=0$. Mais dans ce cas
\begin{equation*}
\zeta(-\frac{b_2}{N},-s-k_2+1)+(-1)^{k_2+1} \zeta(\frac{b_2}{N},-s-k_2+1) = \zeta(-\frac{b_2}{N},-s+1) - \zeta(\frac{b_2}{N},-s+1)
\end{equation*}
est holomorphe en $s=0$ ; sa valeur est $\zeta^*(-\frac{b_2}{N},1) - \zeta^*(\frac{b_2}{N},1)$.
\item Le facteur $\Gamma(-s-k_2-1)$ a un pôle simple. Mais ce terme n'intervient que si $k_2$ est impair, et dans ce cas on a
\begin{equation*}
\zeta(\frac{b_1}{N},-k-1) + (-1)^{k_1} \zeta(-\frac{b_1}{N},-k-1) = -\frac{B_{k+2}(\{\frac{b_1}{N}\})}{k+2} + (-1)^{k} \frac{B_{k+2}(\{-\frac{b_1}{N}\})}{k+2} = 0.
\end{equation*}
\end{enumerate}

L'expression précédente est donc holomorphe en $s=0$. Nous allons calculer l'intégrale (\ref{integrale Sk1k2}) pour $s=0$, soit
\begin{equation*}
I := \int_0^\infty S^{k_2,0}_{\hat{\delta}_{a_1}+\hat{\delta}_{-a_1}, \hat{\delta}_{b_2}+(-1)^{k_2+1} \hat{\delta}_{-b_2}}(iy) S^{k_1,0}_{\delta_{b_1} + (-1)^{k_1} \delta_{-b_1}, \delta_{-a_2}-\delta_{a_2}}(\frac{i}{y}) y^{k_2} \dd y.
\end{equation*}
Pour cela, nous aurons besoin du lemme suivant \cite[Lemmas 8 and 9]{brunault:reg_siegel}.

\begin{lem}\label{lem fgh}
Soit $f = \sum_{n=0}^\infty a_n q^n \in M_k(\Gamma_1(N))$ et $g = \sum_{n=0}^\infty b_n q^n \in M_{\ell}(\Gamma_1(N))$ avec $k,\ell \geq 1$. Soit $h=W_N(g)$. Alors on a
\begin{equation}\label{eq fgh}
N^{s/2} \int_0^\infty f^*(iy) g^*\bigl(\frac{i}{Ny}\bigr) y^s \frac{\dd y}{y} = \Lambda(fh,s+\ell) - a_0 \Lambda(h,s+\ell) - b_0 \Lambda(f,s).
\end{equation}
En particulier, on a
\begin{equation}\label{eq fgh 2}
N^{k/2} \int_0^\infty f^*(iy) g^*\bigl(\frac{i}{Ny}\bigr) y^k \frac{\dd y}{y} = \Lambda^*(fh,k+\ell) - a_0 \Lambda(h,k+\ell) - b_0 \Lambda^*(f,k).
\end{equation}
\end{lem}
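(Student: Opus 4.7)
The plan is to prove (\ref{eq fgh}) by a direct Mellin-transform computation built on the Atkin-Lehner relation, and then deduce (\ref{eq fgh 2}) by taking a limit as $s \to k$. Set $h := W_N g$. Applying the definition $(W_N g)(\tau) = i^\ell N^{-\ell/2} \tau^{-\ell} g(-1/(N\tau))$ at $\tau = iy$ gives the clean identity $g(i/(Ny)) = N^{\ell/2} y^\ell h(iy)$. Writing $h_0 = a_0(h)$ and expanding $f = f^* + a_0$, $h = h^* + h_0$ together with the relation $(fh)^* = fh - a_0 h_0$, a short computation in which two cross-terms cancel yields the pointwise identity
\begin{equation*}
f^*(iy)\, g^*\bigl(\tfrac{i}{Ny}\bigr) = N^{\ell/2} y^\ell (fh)^*(iy) - N^{\ell/2} y^\ell a_0\, h^*(iy) - b_0\, f^*(iy).
\end{equation*}
For $\Re(s) > k$, each of the three resulting Mellin integrals converges absolutely and equals a completed $L$-value by the standard formula $\int_0^\infty F^*(iy) y^s \dd y/y = N^{-s/2} \Lambda(F,s)$. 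Multiplying by $N^{s/2}$ and collecting the powers of $N$ produces (\ref{eq fgh}), which extends to all $s \in \CC$ by meromorphic continuation.

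For the regularized specialization (\ref{eq fgh 2}) at $s = k$, the key observation is that the left-hand integral extends to an entire function of $s$: as $y \to 0^+$ the exponential decay of $g^*(i/(Ny))$ dominates the $O(y^{-k})$ growth of $f^*(iy)$, and as $y \to +\infty$ the exponential decay of $f^*(iy)$ beats the polynomial growth of $g^*(i/(Ny))$. Hence the right-hand side of (\ref{eq fgh}) must be holomorphic at $s = k$, even though $\Lambda(fh, s+\ell)$ and $\Lambda(f, s)$ individually have simple poles there, with residues $a_0(W_N(fh))$ and $a_0(W_N f)$ respectively. Cancellation of these residues after multiplication by $b_0$ reduces to the identity $a_0(W_N(fh)) = b_0\, a_0(W_N f)$. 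This follows from the one-line slash-action computation $W_N(fh) = (W_N f)(W_N h)$ combined with the involutivity $W_N^2 = \mathrm{id}$, which gives $W_N h = g$ and hence $a_0(W_N(fh)) = a_0(W_N f) \cdot a_0(g) = b_0\, a_0(W_N f)$. Rewriting the two singular contributions in terms of the regularized values $\Lambda^*(fh, k+\ell)$ and $\Lambda^*(f, k)$ from Définition \ref{def Lambdaf0} then produces (\ref{eq fgh 2}).

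The only real subtlety is this pole-cancellation bookkeeping at $s = k$; once it is in place, the rest of the proof is a routine Mellin-transform calculation.
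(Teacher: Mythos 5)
Your proof is correct and complete. Note that the paper does not actually prove this lemma: it is quoted verbatim from \cite[Lemmas 8 and 9]{brunault:reg_siegel}, so there is no internal argument to compare against; your write-up supplies the missing proof. The computation is the natural one, and every step checks out: the Atkin--Lehner relation gives $g(i/(Ny)) = N^{\ell/2}y^{\ell}h(iy)$, the expansion of $(f-a_0)(N^{\ell/2}y^{\ell}h - b_0)$ does collapse to $N^{\ell/2}y^{\ell}(fh)^* - a_0 N^{\ell/2}y^{\ell}h^* - b_0 f^*$ because the two constant cross-terms cancel, and the three Mellin integrals converge absolutely for $\Re(s)>k$ (the $\Lambda(h,s+\ell)$ term even for $\Re(s)>0$, which is why no regularization is needed on that term at $s=k$, consistent with (\ref{eq fgh 2})). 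The one genuinely non-routine point, the cancellation of the two simple poles at $s=k$ against the entire left-hand side, is handled correctly: $W_N(fh)=(W_Nf)(W_Ng\cdot W_N W_N)\ldots$ more precisely $W_N(fh)=(W_Nf)(W_Nh)$ and $W_Nh=W_N^2g=g$, so $a_0(W_N(fh))=b_0\,a_0(W_Nf)$ and the residues match, leaving exactly the regularized values $\Lambda^*(fh,k+\ell)$ and $\Lambda^*(f,k)$ of Définition \ref{def Lambdaf0}. I have no corrections.
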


En faisant le changement de variables $y \to Ny$ dans $I$, on a
\begin{equation*}
I = N^{k_2+1} \int_0^\infty S^{k_2,0}_{\hat{\delta}_{a_1}+\hat{\delta}_{-a_1}, \hat{\delta}_{b_2}+(-1)^{k_2+1} \hat{\delta}_{-b_2}}(iNy) S^{k_1,0}_{\delta_{b_1} + (-1)^{k_1} \delta_{-b_1}, \delta_{-a_2}-\delta_{a_2}}(\frac{i}{Ny}) y^{k_2} \dd y.
\end{equation*}
Par définition, on a
\begin{align*}
S^{k_2,0}_{\hat{\delta}_{a_1}+\hat{\delta}_{-a_1}, \hat{\delta}_{b_2}+(-1)^{k_2+1} \hat{\delta}_{-b_2}}(iNy) & = H^{(k_2+1),*}_{b_2,a_1}(iy)+H^{(k_2+1),*}_{b_2,-a_1}(iy)\\
S^{k_1,0}_{\delta_{b_1} + (-1)^{k_1} \delta_{-b_1}, \delta_{-a_2}-\delta_{a_2}}(\frac{i}{Ny}) & = G^{(k_1+1),*}_{b_1,-a_2}(\frac{i}{N^2 y}) - G^{(k_1+1),*}_{b_1,a_2}(\frac{i}{N^2 y}).
\end{align*}
En utilisant le lemme \ref{lem fgh} avec $f=H^{(k_2+1)}_{b_2,a_1}+H^{(k_2+1)}_{b_2,-a_1}$ et $g=G^{(k_1+1)}_{b_1,-a_2} - G^{(k_1+1)}_{b_1,a_2}$, il vient
\begin{equation*}
I=I_1+I_2+I_3
\end{equation*}
avec
\begin{align*}
I_1 & = \Lambda^*(f \cdot W_{N^2}(g),k+2)\\
& = \Lambda^*(W_{N^2}(f) \cdot g,0)\\
& = i^{-k_2-1} N \Lambda^*((G^{(k_2+1)}_{b_2,a_1}+G^{(k_2+1)}_{b_2,-a_1})(G^{(k_1+1)}_{b_1,-a_2} - G^{(k_1+1)}_{b_1,a_2}),0)\\
I_2 & = - a_0(f) \Lambda(W_{N^2}(g),k+2)\\
I_3 & = - a_0(g) \Lambda^*(f,k_2+1).
\end{align*}

\emph{Calcul de $I_2$.} Grâce aux lemmes \ref{lem W GH} et \ref{lem L Hkab}, on a
\begin{align*}
\Lambda(W_{N^2}(g),k+2) & = \frac{i^{k_1+1}}{N} \Lambda(H^{(k_1+1)}_{b_1,-a_2}-H^{(k_1+1)}_{b_1,a_2},k+2) \\
& = \frac{i^{k_1+1}}{N} N^{k+2} (2\pi)^{-k-2} \Gamma(k+2) \cdot \\
& \quad \left( \hat{\zeta}(-\frac{b_1}{N},k+2) \hat{\zeta}(\frac{a_2}{N},k_2+2) + (-1)^{k_1+1} \hat{\zeta}(\frac{b_1}{N},k+2) \hat{\zeta}(-\frac{a_2}{N},k_2+2) \right.\\
& \qquad \left. - \hat{\zeta}(-\frac{b_1}{N},k+2) \hat{\zeta}(-\frac{a_2}{N},k_2+2) + (-1)^{k_1} \hat{\zeta}(\frac{b_1}{N},k+2) \hat{\zeta}(\frac{a_2}{N},k_2+2) \right)\\
& = i^{k_1+1} N^{k+1} (2\pi)^{-k-2} (k+1)! \left(\hat{\zeta}(-\frac{b_1}{N},k+2)+(-1)^{k_1} \hat{\zeta}(\frac{b_1}{N},k+2) \right) \\
& \qquad \left(\hat{\zeta}(\frac{a_2}{N},k_2+2)-\hat{\zeta}(-\frac{a_2}{N},k_2+2)\right).
\end{align*}
On obtient donc
\begin{align*}
I_2 & = - i^{k_1+1} N^{k+1} (2\pi)^{-k-2} (k+1)! a_0(H^{(k_2+1)}_{b_2,a_1}+H^{(k_2+1)}_{b_2,-a_1}) \cdot \\
& \qquad \left(\hat{\zeta}(-\frac{b_1}{N},k+2)+(-1)^{k_1} \hat{\zeta}(\frac{b_1}{N},k+2) \right) \left(\hat{\zeta}(\frac{a_2}{N},k_2+2)-\hat{\zeta}(-\frac{a_2}{N},k_2+2)\right).
\end{align*}

\emph{Calcul de $I_3$.} Si $k_1 \geq 1$ et $a_2 \neq 0$, alors $a_0(g)=0$. Si $a_2=0$, alors $g=0$. On peut donc supposer $k_1=0$ et $a_2 \neq 0$. Si $b_1 \neq 0$, alors $a_0(g)=0$. On peut donc supposer $b_1=0$. Alors
\begin{equation*}
a_0(g) = \left(\frac12-\left\{\frac{-a_2}{N}\right\}\right)-\left(\frac12 - \left\{\frac{a_2}{N}\right\}\right) = \left\{ \frac{a_2}{N} \right\} - \left\{-\frac{a_2}{N}\right\} = 2 \left\{\frac{a_2}{N}\right\}-1.
\end{equation*}
De plus
\begin{align*}
\Lambda(f,s) & = N^s (2\pi)^{-s} \Gamma(s) \left( \hat{\zeta}(-\frac{b_2}{N},s)\hat{\zeta}(-\frac{a_1}{N},s-k_2) + (-1)^{k_2+1} \hat{\zeta}(\frac{b_2}{N},s)\hat{\zeta}(\frac{a_1}{N},s-k_2) \right. \\
& \qquad \qquad \qquad \qquad \left. + \hat{\zeta}(-\frac{b_2}{N},s)\hat{\zeta}(\frac{a_1}{N},s-k_2)+ (-1)^{k_2+1} \hat{\zeta}(\frac{b_2}{N},s)\hat{\zeta}(-\frac{a_1}{N},s-k_2)\right)\\
& = N^s (2\pi)^{-s} \Gamma(s) \left( \hat{\zeta}(-\frac{b_2}{N},s)+(-1)^{k_2+1} \hat{\zeta}(\frac{b_2}{N},s) \right) \left( \hat{\zeta}(-\frac{a_1}{N},s-k_2) + \hat{\zeta}(\frac{a_1}{N},s-k_2)\right).
\end{align*}
Comme $a_1 \neq 0$, les fonctions $\hat{\zeta}(\frac{a_1}{N},s)$ et $\hat{\zeta}(-\frac{a_1}{N},s)$ sont holomorphes sur $\CC$, d'où
\begin{align*}
\Lambda^*(f,k_2+1) = \Lambda(f,k_2+1) = N^{k_2+1} (2\pi)^{-k_2-1} k_2! & \left( \hat{\zeta}(-\frac{b_2}{N},k_2+1)+(-1)^{k_2+1} \hat{\zeta}(\frac{b_2}{N},k_2+1) \right) \cdot \\
& \qquad \cdot \left( \hat{\zeta}(\frac{a_1}{N},1) + \hat{\zeta}(-\frac{a_1}{N},1)\right).
\end{align*}
On a alors
\begin{align*}
I_3 & = \delta_{k_1=0} \delta_{a_2 \neq 0} \delta_{b_1=0} N^{k_2+1} (2\pi)^{-k_2-1} k_2! \left(1-2\left\{\frac{a_2}{N}\right\}\right) \cdot \\
& \qquad \cdot \left( \hat{\zeta}(-\frac{b_2}{N},k_2+1)+(-1)^{k_2+1} \hat{\zeta}(\frac{b_2}{N},k_2+1) \right) \left( \hat{\zeta}(\frac{a_1}{N},1) + \hat{\zeta}(-\frac{a_1}{N},1)\right)
\end{align*}

Alors l'intégrale régularisée de $p_1^* \Eis^{k_1}_{\mathcal{D}}(u_1) \wedge \pi_{k_2+1}(p_2^* \Eis^{k_2}_{\mathrm{hol}}(u_2))$ le long de $X^k \{0,\infty\}$ est donnée par
\begin{equation}\label{expression ABCDEF}
\int_{X^k \{0,\infty\}}^{*} p_1^* \Eis^{k_1}_{\mathcal{D}}(u_1) \wedge \pi_{k_2+1}(p_2^* \Eis^{k_2}_{\mathrm{hol}}(u_2))  = A+B+C+D+E+F
\end{equation}
avec
\begin{flalign*}
A & = A^{k_1,k_2}(u_1,u_2) \\
& = i^{k_1-k_2+1} \frac{(k_1+2)(k_2+2)}{4N^{k_1+k_2+2}} (2\pi)^{k_1+k_2+1} \Lambda^*((G^{(k_2+1)}_{b_2,a_1}+G^{(k_2+1)}_{b_2,-a_1})(G^{(k_1+1)}_{b_1,-a_2} - G^{(k_1+1)}_{b_1,a_2}),0)\\
B & = B^{k_1,k_2}(u_1,u_2) \\
& = (-1)^{k_1} \frac{(k_1+2)! (k_2+2)}{8\pi^2 N^2} (2i\pi)^{k_2} \left(\hat{\zeta}(-\frac{b_1}{N},k_1+2) + (-1)^{k_1} \hat{\zeta}(\frac{b_1}{N},k_1+2) \right)\cdot\\
& \qquad \cdot \left(\hat{\zeta}(\frac{a_2}{N},2)-\hat{\zeta}(-\frac{a_2}{N},2)\right) \left(\zeta^{(*)}(-\frac{b_2}{N},-k_2+1)+(-1)^{k_2+1} \zeta^{(*)}(\frac{b_2}{N},-k_2+1)\right)\\
C & = C^{k_1,k_2}(u_1,u_2) \\
& = \delta_{k_1=0} \delta_{b_1=0} \delta_{a_2 \neq 0} \frac{k_2+2}{2N^2} (2i\pi)^{k_2} \left(\hat{\zeta}(\frac{a_1}{N},1)+\hat{\zeta}(-\frac{a_1}{N},1)\right) \cdot\\
& \qquad \cdot \left(\hat{\zeta}(\frac{a_2}{N},1)-\hat{\zeta}(-\frac{a_2}{N},1)\right) \left(\zeta(-\frac{b_2}{N},-k_2)+(-1)^{k_2+1} \zeta(\frac{b_2}{N},-k_2)\right)\\
D & = D^{k_1,k_2}(u_1,u_2) \\
& = -\delta_{k_2\equiv 1(2)} i^{k_1} \frac{(k_1+2)(k_2+2)}{2N^2} (2\pi)^{k_1+2k_2+2} \left(\hat{\zeta}(\frac{a_1}{N},-k_2)+\hat{\zeta}(-\frac{a_1}{N},-k_2)\right) \cdot \\
& \qquad \cdot \zeta(-\frac{a_2}{N},-k_2-1) \cdot \lim_{s \to 0} \left(\Gamma(s-k_2-1) \left(\zeta(\frac{b_1}{N},s-k-1) + (-1)^{k_1} \zeta(-\frac{b_1}{N},s-k-1)\right)\right)\\
E & = E^{k_1,k_2}(u_1,u_2) \\
& = - i^{k_1} \frac{(k_1+2) (k_2+2)}{4N^{k_1+2}} (2\pi)^{k_1+k_2+1} N^{-k_2-1} I_2\\
& = (-1)^{k_1} i \frac{(k_1+2)(k_2+2)(k+1)!}{8\pi N^2} a_0(H^{(k_2+1)}_{b_2,a_1}+H^{(k_2+1)}_{b_2,-a_1}) \cdot \\
& \qquad \cdot \left(\hat{\zeta}(-\frac{b_1}{N},k+2)+(-1)^{k_1} \hat{\zeta}(\frac{b_1}{N},k+2) \right) \left(\hat{\zeta}(\frac{a_2}{N},k_2+2)-\hat{\zeta}(-\frac{a_2}{N},k_2+2)\right)
\end{flalign*}
\begin{flalign*}
F & = F^{k_1,k_2}(u_1,u_2)\\
& = - i^{k_1} \frac{(k_1+2) (k_2+2)}{4N^{k_1+2}} (2\pi)^{k_1+k_2+1} N^{-k_2-1} I_3\\
& = \delta_{k_1=0} \delta_{b_1=0} \delta_{a_2 \neq 0}  \frac{k_2! (k_2+2)}{2N^2} \left(2\left\{\frac{a_2}{N}\right\}-1\right) \cdot \\
& \qquad \cdot \left( \hat{\zeta}(-\frac{b_2}{N},k_2+1)+(-1)^{k_2+1} \hat{\zeta}(\frac{b_2}{N},k_2+1) \right) \left( \hat{\zeta}(\frac{a_1}{N},1) + \hat{\zeta}(-\frac{a_1}{N},1)\right).
\end{flalign*}
Dans la formule pour $B$, le symbole $\zeta^{(*)}$ indique que l'on prend la valeur régularisée lorsque $k_2=0$.

Montrons maintenant que les termes $C$ et $F$ se simplifient. D'après la formule (\ref{hatzeta 1}), on a
\begin{equation}\label{eq C1}
\hat{\zeta}(\frac{a_2}{N},1)-\hat{\zeta}(-\frac{a_2}{N},1) = 2i\pi \left(\frac12-\left\{\frac{a_2}{N}\right\}\right).
\end{equation}
D'autre part, en appliquant la formule de Hurwitz (\ref{hurwitz 1}) en $s=k_2+1$, il vient
\begin{align*}
\zeta(-\frac{b_2}{N},-k_2) & = \frac{k_2!}{(2\pi)^{k_2+1}} \left( (-i)^{k_2+1} \hat{\zeta}(-\frac{b_2}{N},k_2+1) + i^{k_2+1} \hat{\zeta}(\frac{b_2}{N},k_2+1) \right)\\
& = (-i)^{k_2+1} \frac{k_2!}{(2\pi)^{k_2+1}} \left( \hat{\zeta}(-\frac{b_2}{N},k_2+1) + (-1)^{k_2+1} \hat{\zeta}(\frac{b_2}{N},k_2+1) \right).
\end{align*}
De même, on a
\begin{equation*}
\zeta(\frac{b_2}{N},-k_2) = (-i)^{k_2+1} \frac{k_2!}{(2\pi)^{k_2+1}} \left( \hat{\zeta}(\frac{b_2}{N},k_2+1) + (-1)^{k_2+1} \hat{\zeta}(-\frac{b_2}{N},k_2+1) \right).
\end{equation*}
On en déduit
\begin{equation}\label{eq C2}
\zeta(-\frac{b_2}{N},-k_2) + (-1)^{k_2+1} \zeta(\frac{b_2}{N},-k_2) = 2 \frac{(-i)^{k_2+1} k_2!}{(2\pi)^{k_2+1}} \left(  \hat{\zeta}(-\frac{b_2}{N},k_2+1) + (-1)^{k_2+1} \hat{\zeta}(\frac{b_2}{N},k_2+1)  \right).
\end{equation}
En reportant (\ref{eq C1}) et (\ref{eq C2}) dans le terme $C$, on obtient
\begin{align*}
C & = \delta_{k_1=0} \delta_{b_1=0} \delta_{a_2 \neq 0} \frac{k_2+2}{2N^2} (2i\pi)^{k_2} \left(\hat{\zeta}(\frac{a_1}{N},1)+\hat{\zeta}(-\frac{a_1}{N},1)\right) \cdot\\
& \qquad \cdot 2i\pi \left(\frac12-\left\{\frac{a_2}{N}\right\}\right) 2 \frac{(-i)^{k_2+1} k_2!}{(2\pi)^{k_2+1}} \left(  \hat{\zeta}(-\frac{b_2}{N},k_2+1) + (-1)^{k_2+1} \hat{\zeta}(\frac{b_2}{N},k_2+1)  \right)\\
& = \delta_{k_1=0} \delta_{b_1=0} \delta_{a_2 \neq 0} \frac{k_2! (k_2+2)}{2N^2} \left(\hat{\zeta}(\frac{a_1}{N},1)+\hat{\zeta}(-\frac{a_1}{N},1)\right) \cdot\\
& \qquad \cdot \left(1-2\left\{\frac{a_2}{N}\right\}\right) \left( \hat{\zeta}(-\frac{b_2}{N},k_2+1) + (-1)^{k_2+1} \hat{\zeta}(\frac{b_2}{N},k_2+1) \right)\\
& = - F.
\end{align*}

Nous allons maintenant simplifier d'autres termes, en distinguant les cas suivant $k_2$.

\emph{Premier cas : $k_2=0$.}

On a alors $D=0$. Nous allons montrer que $B+E=0$. On a
\begin{align*}
B & = (-1)^{k_1} \frac{(k_1+2)!}{4\pi^2 N^2} \left(\hat{\zeta}(-\frac{b_1}{N},k_1+2) + (-1)^{k_1} \hat{\zeta}(\frac{b_1}{N},k_1+2) \right)\cdot\\
& \qquad \cdot \left(\hat{\zeta}(\frac{a_2}{N},2)-\hat{\zeta}(-\frac{a_2}{N},2)\right) \left(\zeta^{*}(-\frac{b_2}{N},1)- \zeta^{*}(\frac{b_2}{N},1)\right)
\end{align*}
et
\begin{align*}
E & = (-1)^{k_1} i \frac{(k_1+2)!}{4\pi N^2} a_0(H^{(1)}_{b_2,a_1}+H^{(1)}_{b_2,-a_1}) \cdot \\
& \qquad \cdot \left(\hat{\zeta}(-\frac{b_1}{N},k_1+2)+(-1)^{k_1} \hat{\zeta}(\frac{b_1}{N},k_1+2) \right) \left(\hat{\zeta}(\frac{a_2}{N},2)-\hat{\zeta}(-\frac{a_2}{N},2)\right).
\end{align*}
Si $b_2=0$, alors $H^{(1)}_{0,a_1}+H^{(1)}_{0,-a_1}=0$ et donc $B=E=0$. Nous pouvons donc supposer $b_2 \neq 0$. D'après la formule (\ref{zeta* 1}), on a
\begin{equation*}
\zeta^{*}(-\frac{b_2}{N},1)- \zeta^{*}(\frac{b_2}{N},1) = i\pi \frac{\zeta_N^{-b_2}+1}{\zeta_N^{-b_2}-1} = i\pi \frac{1+\zeta_N^{b_2}}{1-\zeta_N^{b_2}}.
\end{equation*}
D'après la définition \ref{def Hkab} et en utilisant l'identité
\begin{equation*}
\frac{1+\zeta_N^a}{1-\zeta_N^a} + \frac{1+\zeta_N^{-a}}{1-\zeta_N^{-a}} = 0 \qquad (a \in \Z/N\Z -\{0\}),
\end{equation*}
il vient
\begin{equation*}
a_0(H^{(1)}_{b_2,a_1}+H^{(1)}_{b_2,-a_1}) = - \frac{1+\zeta_N^{b_2}}{1-\zeta_N^{b_2}}.
\end{equation*}
En reportant dans $B$ et $E$, on obtient $B+E=0$. Au final, on a donc dans ce cas
\begin{align*}
& \int_{X^k \{0,\infty\}}^{*} p_1^* \Eis^{k_1}_{\mathcal{D}}(u_1) \wedge \pi_{k_2+1}(p_2^* \Eis^{k_2}_{\mathrm{hol}}(u_2)) = A \\
& = i^{k_1+1} \frac{(k_1+2)}{2N^{k_1+2}} (2\pi)^{k_1+1} \Lambda^*((G^{(1)}_{b_2,a_1}+G^{(1)}_{b_2,-a_1})(G^{(k_1+1)}_{b_1,-a_2} - G^{(k_1+1)}_{b_1,a_2}),0)
\end{align*}

\emph{Second cas : $k_2 \geq 1$.}

Puisque $\zeta(-\frac{b_2}{N},-k_2+1) = (-1)^{k_2} \zeta(\frac{b_2}{N},-k_2+1)$, on a $B=0$.

Comparons les termes $D$ et $E$. On a
\begin{align*}
E & = (-1)^{k_1} i \frac{(k_1+2)(k_2+2)(k+1)!}{8\pi N^2} \left(\hat{\zeta}(-\frac{a_1}{N},-k_2)+\hat{\zeta}(\frac{a_1}{N},-k_2)\right) \cdot \\
& \qquad \cdot \left(\hat{\zeta}(-\frac{b_1}{N},k+2)+(-1)^{k_1} \hat{\zeta}(\frac{b_1}{N},k+2) \right) \left(\hat{\zeta}(\frac{a_2}{N},k_2+2)-\hat{\zeta}(-\frac{a_2}{N},k_2+2)\right)
\end{align*}
D'après (\ref{hatzeta -x}), on a $\hat{\zeta}(-\frac{a_1}{N},-k_2)+\hat{\zeta}(\frac{a_1}{N},-k_2)=0$ si $k_2$ est pair. Donc $E$ est nul dès que $k_2$ est pair, et nous supposons désormais $k_2$ impair. Par la formule d'Hurwitz en $s=k_2+2$, on obtient
\begin{align*}
\zeta(-\frac{a_2}{N},-k_2-1) & = \frac{(k_2+1)!}{(2\pi)^{k_2+2}} \left( (-i)^{k_2+2} \hat{\zeta}(-\frac{a_2}{N},k_2+2)+i^{k_2+2} \hat{\zeta}(\frac{a_2}{N},k_2+2)\right) \\
& = \frac{i^{k_2+2} (k_2+1)!}{(2\pi)^{k_2+2}} \left(\hat{\zeta}(\frac{a_2}{N},k_2+2)-\hat{\zeta}(-\frac{a_2}{N},k_2+2)\right).
\end{align*}
Appliquons maintenant la formule d'Hurwitz aux termes à l'intérieur de la limite dans $D$. Il vient
\begin{align*}
\zeta(\frac{b_1}{N},s-k-1) & = \frac{\Gamma(k+2-s)}{(2\pi)^{k+2-s}} \left(e^{-\frac{i\pi}{2}(k+2-s)} \hat{\zeta}(\frac{b_1}{N},k+2-s) + e^{\frac{i\pi}{2}(k+2-s)} \hat{\zeta}(-\frac{b_1}{N},k+2-s)\right)\\
\zeta(-\frac{b_1}{N},s-k-1) & = \frac{\Gamma(k+2-s)}{(2\pi)^{k+2-s}} \left(e^{-\frac{i\pi}{2}(k+2-s)} \hat{\zeta}(-\frac{b_1}{N},k+2-s) + e^{\frac{i\pi}{2}(k+2-s)} \hat{\zeta}(\frac{b_1}{N},k+2-s)\right)
\end{align*}
d'où l'on déduit
\begin{align*}
& \zeta(\frac{b_1}{N},s-k-1) + (-1)^{k_1} \zeta(-\frac{b_1}{N},s-k-1) \\
& = \frac{\Gamma(k+2-s)}{(2\pi)^{k+2-s}} \left( e^{-\frac{i\pi}{2}(k+2-s)}(1-e^{-i\pi s}) \hat{\zeta}(\frac{b_1}{N},k+2-s) + e^{\frac{i\pi}{2}(k+2-s)}(1-e^{i\pi s}) \hat{\zeta}(-\frac{b_1}{N},k+2-s) \right)
\end{align*}
Lorsque $s \to 0$, on a $\Gamma(s-k_2-1) \sim \frac{1}{(k_2+1)!} s^{-1}$ et $1-e^{\pm i \pi s} \sim \mp i\pi s$ d'où
\begin{align*}
& \lim_{s \to 0} \left(\Gamma(s-k_2-1) \left(\zeta(\frac{b_1}{N},s-k-1) + (-1)^{k_1} \zeta(-\frac{b_1}{N},s-k-1)\right)\right) \\
& = \frac{(k+1)!}{(k_2+1)!(2\pi)^{k+2}} \left( (-i)^{k+2} i\pi \hat{\zeta}(\frac{b_1}{N},k+2) + i^{k+2} (-i\pi) \hat{\zeta}(-\frac{b_1}{N},k+2)\right)\\
& = \frac{(k+1)! (-i)^{k+2} i\pi}{(k_2+1)! (2\pi)^{k+2}} \left( \hat{\zeta}(\frac{b_1}{N},k+2) + (-1)^{k_1} \hat{\zeta}(-\frac{b_1}{N},k+2)\right).
\end{align*}
En reportant dans $D$, on obtient
\begin{align*}
D & = - i^{k_1} \frac{(k_1+2)(k_2+2)}{2N^2} (2\pi)^{k_1+2k_2+2} \left(\hat{\zeta}(\frac{a_1}{N},-k_2)+\hat{\zeta}(-\frac{a_1}{N},-k_2)\right) \cdot \\
& \qquad \cdot \frac{i^{k_2+2} (k_2+1)!}{(2\pi)^{k_2+2}} \left(\hat{\zeta}(\frac{a_2}{N},k_2+2)-\hat{\zeta}(-\frac{a_2}{N},k_2+2)\right) \cdot \\
& \qquad \cdot \frac{(k+1)! (-i)^{k+2} i\pi}{(k_2+1)! (2\pi)^{k+2}} \left( \hat{\zeta}(\frac{b_1}{N},k+2) + (-1)^{k_1} \hat{\zeta}(-\frac{b_1}{N},k+2)\right)\\
& = - i  \frac{(k_1+2)(k_2+2)(k+1)! }{8\pi N^2} \left(\hat{\zeta}(\frac{a_1}{N},-k_2)+\hat{\zeta}(-\frac{a_1}{N},-k_2)\right) \cdot \\
& \qquad \cdot \left(\hat{\zeta}(\frac{a_2}{N},k_2+2)-\hat{\zeta}(-\frac{a_2}{N},k_2+2)\right) \cdot \\
& \qquad \cdot \left( \hat{\zeta}(\frac{b_1}{N},k+2) + (-1)^{k_1} \hat{\zeta}(-\frac{b_1}{N},k+2)\right)\\
& = -E.
\end{align*}
Au final, on a donc dans ce cas
\begin{equation*}
\int_{X^k \{0,\infty\}}^{*} p_1^* \Eis^{k_1}_{\mathcal{D}}(u_1) \wedge \pi_{k_2+1}(p_2^* \Eis^{k_2}_{\mathrm{hol}}(u_2)) = A.
\end{equation*}

Dans tous les cas, on a donc
\begin{equation*}
\int_{X^k \{0,\infty\}}^* p_1^* \Eis^{k_1}_{\mathcal{D}}(u_1) \wedge \pi_{k_2+1} (p_2^* \Eis^{k_2}_{\hol}(u_2)) = A^{k_1,k_2}(u_1,u_2).
\end{equation*}

\begin{proof}[Démonstration du théorème \ref{main thm}] Rappelons que la forme différentielle $\Eis_{\mathcal{D}}^{k_1,k_2}(u_1,u_2)$ est donnée par

\begin{align*}
\Eis_{\mathcal{D}}^{k_1,k_2}(u_1,u_2) & = p_1^* \Eis^{k_1}_{\mathcal{D}}(u_1) \wedge \pi_{k_2+1} (p_2^* \Eis^{k_2}_{\hol}(u_2)) \\
& \qquad + (-1)^{k_1+1} \pi_{k_1+1} (p_1^* \Eis^{k_1}_{\hol}(u_1)) \wedge p_2^* \Eis^{k_2}_{\mathcal{D}}(u_2)\\
& = p_1^* \Eis^{k_1}_{\mathcal{D}}(u_1) \wedge \pi_{k_2+1} (p_2^* \Eis^{k_2}_{\hol}(u_2)) \\
& \qquad + (-1)^{(k_1+1)(k_2+1)} p_2^* \Eis^{k_2}_{\mathcal{D}}(u_2) \wedge \pi_{k_1+1} (p_1^* \Eis^{k_1}_{\hol}(u_1)).
\end{align*}
Notons $\theta : E^k \to E^k$ l'isomorphisme défini par
\begin{equation*}
\theta : E^k = E^{k_2} \times_{Y(N)} E^{k_1} \xrightarrow{\cong} E^{k_1} \times_{Y(N)} E^{k_2} = E^k.
\end{equation*}
Cet isomorphisme laisse stable le cycle de Shokurov $X^k \{0,\infty\}$ en multipliant l'orientation par $(-1)^{k_1 k_2}$. Il vient donc
\begin{align*}
& \int_{X^k \{0,\infty\}}^* p_2^* \Eis^{k_2}_{\mathcal{D}}(u_2) \wedge \pi_{k_1+1} (p_1^* \Eis^{k_1}_{\hol}(u_1)) \\
& = (-1)^{k_1 k_2} \int_{X^k \{0,\infty\}}^* \theta^* \left( p_2^* \Eis^{k_2}_{\mathcal{D}}(u_2) \wedge \pi_{k_1+1} (p_1^* \Eis^{k_1}_{\hol}(u_1)) \right) \\
& = (-1)^{k_1 k_2} A^{k_2,k_1}(u_2,u_1).
\end{align*}
On obtient ainsi
\begin{align*}
& \int_{X^k \{0,\infty\}}^{*} \Eis^{k_1,k_2}_{\mathcal{D}}(u_1,u_2) \\
& = A^{k_1,k_2}(u_1,u_2) + (-1)^{k_1+k_2+1} A^{k_2,k_1}(u_2,u_1) \\
& = \frac{(k_1+2)(k_2+2)}{4N^{k+2}} (2\pi)^{k+1} \left( i^{k_1-k_2+1}  \Lambda^*((G^{(k_2+1)}_{b_2,a_1}+G^{(k_2+1)}_{b_2,-a_1})(G^{(k_1+1)}_{b_1,-a_2} - G^{(k_1+1)}_{b_1,a_2}),0) \right. \\
& \qquad \left. +  (-1)^{k_1+k_2+1} i^{k_2-k_1+1}  \Lambda^*((G^{(k_1+1)}_{b_1,a_2}+G^{(k_1+1)}_{b_1,-a_2})(G^{(k_2+1)}_{b_2,-a_1} - G^{(k_2+1)}_{b_2,a_1}),0) \right)\\
& = \frac{(k_1+2)(k_2+2)}{4N^{k+2}} (2\pi)^{k+1} i^{k_1-k_2+1} \cdot \\
& \qquad \Lambda^*((G^{(k_2+1)}_{b_2,a_1}+G^{(k_2+1)}_{b_2,-a_1})(G^{(k_1+1)}_{b_1,-a_2} - G^{(k_1+1)}_{b_1,a_2}) - (G^{(k_1+1)}_{b_1,a_2}+G^{(k_1+1)}_{b_1,-a_2})(G^{(k_2+1)}_{b_2,-a_1} - G^{(k_2+1)}_{b_2,a_1}),0)\\
& = \frac{(k_1+2)(k_2+2)}{2N^{k+2}} (2\pi)^{k+1} i^{k_1-k_2+1} \Lambda^*( G^{(k_2+1)}_{b_2,a_1} G^{(k_1+1)}_{b_1,-a_2} - G^{(k_2+1)}_{b_2,-a_1} G^{(k_1+1)}_{b_1,a_2},0).
\end{align*}

\end{proof}

\bibliographystyle{smfplain}
\bibliography{references}

\end{document}